\newtheorem{theorem}{Theorem}[section]
\newtheorem{corollary}{Corollary}[section]
\newtheorem{lemma}{Lemma}[section]
\newtheorem{proposition}{Proposition}[section]
\newtheorem{definition}{Definition}[section]
\newtheorem{remark}{Remark}[section]
\newcommand{\bal}{\begin{align}}
\newcommand{\bbal}{\begin{align*}}
\newcommand{\beq}{\begin{equation}}
\newcommand{\eeq}{\end{equation}}
\newcommand{\bca}{\begin{cases}}
\newcommand{\eca}{\end{cases}}
\def\div{\mathord{{\rm div}}}
\newcommand{\pa}{\partial}
\newcommand{\fr}{\frac}
\newcommand{\na}{\nabla}
\newcommand{\De}{\Delta}
\newcommand{\cd}{\cdot}
\newcommand{\ep}{\varepsilon}
\newcommand{\dd}{\mathrm{d}}
\newcommand{\R}{\mathbb{R}}
\newcommand{\T}{\mathbb{T}}
\newcommand{\f}{\left}
\newcommand{\g}{\right}
\numberwithin{equation}{section}
\begin{document}

\title{Ill-posedness in $B^s_{p,\infty}$ of the Euler equations: Non-continuous dependence}

\author{
 Jinlu Li\footnote{
 School of Mathematics and Computer Sciences,
 Gannan Normal University, Ganzhou 341000, China.
\text{E-mail: lijinlu@gnnu.edu.cn}}
\quad and\quad
Yanghai Yu\footnote{
 School of Mathematics and Statistics,
 Anhui Normal University, Wuhu 241002, China.
\text{E-mail: yuyanghai214@sina.com} (Corresponding author)}
}
\date{\today}
\maketitle
\begin{abstract}
In this paper, we solve an open problem left in the monographs \cite[Bahouri-Chemin-Danchin, (2011)]{BCD}. Precisely speaking, it was obtained in \cite[Theorem 7.1 on pp293, (2011)]{BCD} the existence and uniqueness of $B^s_{p,\infty}$ solution for the Euler equations. We furthermore prove that the solution map of the Euler equation is not continuous in the Besov spaces from $B^s_{p,\infty}$ to $L_T^\infty B^s_{p,\infty}$ for $s>1+d/p$ with $1\leq p\leq \infty$ and in the H\"{o}lder spaces from $C^{k,\alpha}$ to $L_T^\infty C^{k,\alpha}$ with $k\in \mathbb{N}^+$ and $\alpha\in(0,1)$, which later covers particularly the ill-posedness of $C^{1,\alpha}$ solution in \cite[Trans. Amer. Math. Soc., (2018)]{MYtams}. Beyond purely technical aspects on the choice of initial data, a remarkable novelty of the proof is the construction of an approximate solution to the Burgers equation.
\end{abstract}

{\bf Keywords:} Euler equations; Ill-posedness; Besov spaces.

{\bf MSC (2020):} 35Q35; 35B30

\section{Introduction}
In this paper, we consider the Cauchy problem of the Euler equations for ideal incompressible fluid
\begin{align}\label{E}
\begin{cases}
\pa_t u+u\cdot \nabla u+\nabla P=0, &\quad (t,x)\in \R^+\times\Omega,\\
\mathrm{div\,} u=0,&\quad (t,x)\in \R^+\times\Omega,\\
u(0,x)=u_0(x), &\quad x\in \Omega,
\end{cases}
\end{align}
where the fluid domain $\Omega=\R^d$ or $\T^d$, the vector field $u(t,x):[0,\infty)\times \Omega\to {\mathbb R}^d$ stands for the velocity of the fluid, the quantity $P(t,x):[0,\infty)\times \Omega\to {\mathbb R}$ denotes the scalar pressure, and $\mathrm{div\,} u=0$ means that the fluid is incompressible.

We say that the Cauchy problem \eqref{E} is Hadamard (locally) well-posed in a Banach space $X$ if for any initial data $u_0\in X$ there exists (at least for a short time) $T>0$ and a unique solution in the space $\mathcal{C}([0,T),X)$ which depends continuously on the initial data. In particular, we say that the solution map is continuous if for any $u_0\in X$, there exists a neighborhood $B \subset X$ of $u_0$ such that for every $v_0 \in B$ the map $v \mapsto V$ from $B$ to $\mathcal{C}([0, T]; X)$ is continuous, where $V$denotes the solution to \eqref{E} with initial data $v_0$. Otherwise the problem is said to be ill-posed. It was pointed out by Kato \cite{Kato} that this notion of well-posedness is rather strong and may not be suitable for certain problems studied in the literature. Instead, it is frequently required that the solution persist in a larger space such as $L^\infty([0, T),X)$ or $C_w([0, T),X)$ (the subscript $w$ indicates weak continuity in the time variable). In this paper, we shall describe situations in which the third well-posedness condition breaks down (for data in Besov spaces).

\subsection{Motivations and Previous Results}

There is by now an extensive literature on the wellposedness theory (especially for existence and uniqueness) for the Euler equations. We refer the interested reader to the monographs of Majda-Bertozzi \cite{MB}, Bahouri-Chemin-Danchin \cite{BCD}, Constantin \cite{P} and the references therein (see e.g. Introduction of Bourgain-Li's series papers \cite{BLgfa,BLim,BLimrn}) for a more comprehensive account. Next, we mainly recall a few progress which are closely related to our problem. The first rigorous local-in-time existence and uniqueness results for the Euler equations were obtained by Lichtenstein \cite{LL} and Gunther \cite{Gu} in H\"{o}lder spaces $C^{k, \alpha}$ with inter $k \geq 1$ and $0<\alpha<1$ (see also a recent breakthrough on finite-time singularity formation for $C^{1, \alpha}$ solution in \cite{E}). Ebin and Marsden \cite{EM70} proved the short time existence, uniqueness, regularity, and continuous dependence on initial conditions for solutions of the Euler equation on general compact manifolds (possibly with $C^{\infty}$ boundary). Their method is to topologize the space of diffeomorphisms by Sobolev $H^s(s>d / 2+1)$-norms and then solve the geodesic equation using contractions. Kato \cite{Karma,Kjfa} proved the local well-posedness of classical solution to the Euler equations in Sobolev spaces $H^s(\mathbb{R}^d)$ for all $s>1+d/2$. Later based on a new type commutator estimate, Kato and Ponce \cite{KPcpam} extended this result to the Sobolev spaces $W^{s, p}(\mathbb{R}^d)$ of the fractional order for $s>d/ p+1$ and $1<p<\infty$. More refined results are available in Besov type spaces which can accommodate $L^{\infty}$ end-point embedding. Vishik \cite{Varma,Vasns} constructed global solutions to 2D Euler in Besov space $B_{p, 1}^{2 / p+1}(\mathbb{R}^2)$ with $1<p<\infty$. For general dimension $d \geq 2$,
Chae \cite{Chae1,Chae2,Chae3} obtained the local well-posedness of the Euler equations in critical Besov space $B_{p, 1}^{d / p+1}(\mathbb{R}^d)$ with $1<p<\infty$ or $s=d/ p+1,1<p<\infty, r=1$. However, this kind of function spaces is only in the $L^p(1<p<\infty)$-framework since the Riesz transform is not bounded on $L^{\infty}$. Pak and Park \cite{Pak1} established existence and uniqueness of solutions of the Euler equations in $B^{1}_{\infty,1}$ and showed that the solution map is in fact Lipschitz
continuous when viewed as a map between $B^{0}_{\infty,1}$ and $\mathcal{C}([0,\infty);B^{0}_{\infty,1})$. Guo, Li and Yin \cite{guo} proved the continuous dependence of the Euler equations in the space $B_{p, r}^s(\mathbb{R}^d)$ with $s>d/ p+1$, $1\leq p\leq \infty, 1 \leq r < \infty$ or $s=d/ p+1,1\leq p\leq \infty, r=1$. A more general well-posedness result can be found in \cite[Theorem 7.1 on pp293]{BCD}. More precisely,
\begin{theorem}[Theorem 7.1, \cite{BCD}]\label{th0}
Let $1 \leq p, r \leq \infty$ and $s \in \mathbb{R}$ be such that $B_{p, r}^s(\mathbb{R}^d) \hookrightarrow C^{0,1}(\mathbb{R}^d)(i.e.\ s>d/ p+1\ \text{or}\ s=d/ p+1,r=1)$. There exists a constant $c$, depending only on $s, p, r$, and $d$, such that for all divergence-free data $u_0 \in B_{p, r}^s(\mathbb{R}^d)$, there exists a time $T \geq c /\left\|u_0\right\|_{B_{p, r}^s(\mathbb{R}^d)}$ such that the Euler equations \eqref{E} has a unique solution $(u, P)$ on $[0, T] \times \mathbb{R}^d$ satisfying particularly that
\bbal
u\in E_{p, r}^s(T):=
\bca
\mathcal{C}\left([0, T] ; B_{p, r}^s(\mathbb{R}^d)\right) \cap \mathcal{C}^1\left([0, T]; B_{p, r}^{s-1}(\mathbb{R}^d)\right), \quad &\mathrm{if} \ r<\infty,\\
\mathcal{C}_w\left(0, T ; B_{p, \infty}^s(\mathbb{R}^d)\right) \cap C^{0,1}(\mathbb{R}^d)\left([0, T] ; B_{p, \infty}^{s-1}(\mathbb{R}^d)\right), \quad &\mathrm{if} \ r=\infty.
\eca
\end{align*}
Namely, if $r<\infty$ (resp., $r=\infty$ ), then $u$ is continuous (resp., weakly continuous) in time with values in $B_{p, r}^s$.
Moreover, there holds
$\|u\|_{L^\infty_TB^s_{p,r}(\mathbb{R}^d)}\leq C\|u_0\|_{B_{p,r}^s(\mathbb{R}^d)}.$
\end{theorem}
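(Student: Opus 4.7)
The plan is to follow the classical hyperbolic well-posedness scheme: derive a priori estimates in $B^s_{p,r}$, prove a stability inequality at the lower regularity $s-1$, combine a Friedrichs-type approximation with a compactness argument to produce a solution, and handle the endpoint $r=\infty$ separately via a weak-continuity discussion.

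\emph{Step 1 (pressure bound and a priori estimate).} Taking the divergence of the momentum equation and using $\mathrm{div}\,u=0$ gives $-\Delta P=\mathrm{div}\,\mathrm{div}(u\otimes u)$, so $\nabla P=-\nabla(-\Delta)^{-1}\mathrm{div}\,\mathrm{div}(u\otimes u)$. The operator $\nabla^3(-\Delta)^{-1}$ is homogeneous of degree $1$, and combined with the tame product estimate $\|u\otimes u\|_{B^s_{p,r}}\lesssim \|u\|_{L^\infty}\|u\|_{B^s_{p,r}}$ (valid since $B^s_{p,r}\hookrightarrow L^\infty$ under the hypothesis), this produces $\|\nabla P\|_{B^s_{p,r}}\lesssim \|u\|_{B^s_{p,r}}^2$. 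At $p=1,\infty$ the unboundedness of Riesz transforms on $L^\infty$ is circumvented by paraproduct decomposition applied directly in the Besov norm. Plugging this into the standard Besov transport estimate for $\partial_t u+u\cdot\nabla u=-\nabla P$ closes a differential inequality of the form $\tfrac{d}{dt}\|u\|_{B^s_{p,r}}\lesssim \|u\|_{B^s_{p,r}}^2$, from which Gronwall extracts both the announced lifespan $T\geq c/\|u_0\|_{B^s_{p,r}}$ and the uniform bound $\|u\|_{L^\infty_T B^s_{p,r}}\lesssim \|u_0\|_{B^s_{p,r}}$.

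\emph{Step 2 (construction).} I would build approximations $(u^n)$ either by Friedrichs truncation $J_n$ (which reduces the system to an ODE in the Banach space of band-limited $L^2$-functions, giving global-in-time existence for each $n$) or by the linear iteration $\partial_t u^{n+1}+u^n\cdot\nabla u^{n+1}+\nabla P^{n+1}=0$ with divergence-free datum $S_{n+1}u_0$. Step~1 applied uniformly in $n$ provides a common bound in $L^\infty([0,T];B^s_{p,r})$ on the same time interval. To pass to the limit I would show $(u^n)$ is Cauchy in $L^\infty_T B^{s-1}_{p,r}$: the difference $w^n=u^{n+1}-u^n$ solves a transport equation whose source is quadratic in the $u^k$ and linear in $w^{n-1}$, and the $B^{s-1}_{p,r}$ transport estimate closes via Gronwall using the uniform high-regularity control. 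The Fatou property of $B^s_{p,r}$ places the limit $u$ in $L^\infty_T B^s_{p,r}$, and the embedding $B^{s-1}_{p,r}\hookrightarrow L^\infty$ (using $s>d/p+1$) allows passage to the limit in the quadratic nonlinearity.

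\emph{Step 3 (uniqueness and time regularity).} Uniqueness follows from the same difference estimate at regularity $s-1$ applied to two solutions sharing the initial datum. For time regularity, the equation directly yields $\partial_t u\in L^\infty_T B^{s-1}_{p,r}$, so $u\in \mathcal{C}^{0,1}([0,T];B^{s-1}_{p,r})$ for free. When $r<\infty$, one upgrades this to strong continuity in $B^s_{p,r}$ via a dyadic truncation argument: the low-frequency part of $u(t)-u(t_0)$ is controlled by the Lipschitz-in-time bound, while the high-frequency tail is an $\varepsilon$-quantity thanks to the $\ell^r$-summability. The main obstacle, and the only genuinely hard point, is precisely the endpoint $r=\infty$: in $B^s_{p,\infty}$ smooth functions are not dense, the high-frequency tail cannot be made uniformly small, and only weak-$*$ continuity in time survives. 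This same obstruction is exactly the gap the present paper exploits to establish discontinuity of the solution map, showing that the weakened conclusion for $r=\infty$ in Theorem~\ref{th0} is sharp.
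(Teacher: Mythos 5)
This statement is not proved in the paper at all: it is quoted verbatim as Theorem 7.1 of the cited monograph \cite{BCD}, and the authors use it only as a black box for existence and uniqueness. Your sketch correctly reproduces the standard argument of that reference (a priori transport estimate closed by the Leray-projector/pressure bound with the paraproduct fix at $p=1,\infty$, Friedrichs or iterative construction, stability at regularity $s-1$ for convergence and uniqueness, and the strong-versus-weak time-continuity dichotomy between $r<\infty$ and $r=\infty$), so it is essentially the same approach as the source being cited.
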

From the PDE's point of view, it is crucial to know if an equation which models a physical phenomenon is well-posed in the
Hadamard's sense: existence, uniqueness, and continuous dependence of the solutions with respect to the initial data. In particular, the lack of continuous dependence would cause incorrect solutions or non meaningful solutions. For the Burgers equation
$\partial_t u+u \pa_xu=0,$
which can be the most simple quasilinear symmetric hyperbolic equation, Kato \cite{Kato75} proved the sharpness of the continuous dependence on initial data in the sense of that the solution operator for the Burgers equation in $H^k$ with $k \geq 2$ is continuous but not H\"{o}lder continuous with any prescribed exponent. This example indicates that the continuity of the solution map for a symmetric hyperbolic system is a delicate issue. Systematic studies of ill-posedness of the Cauchy problem \eqref{E} are of a more recent date and concern a wide range of phenomena including gradual loss of regularity
of the solution map. To begin, one can consider explicit solutions to the Euler equations. In studying
measure-valued solutions for 3D Euler equations, DiPerna and Majda \cite{DM} introduced the following shear flow
\begin{align}\label{jql}
u(t, x)=\left(f(x_2), 0, g\left(x_1-t f(x_2)\right)\right), \quad x=\left(x_1, x_2, x_3\right),
\end{align}
where $f$ and $g$ are given single variable functions. This explicit flow (sometimes called $2\frac{1}{2}$-dimensional flow) solves the Euler equations with pressure $P=0$. DiPerna and Lions (see e.g. \cite[pp152]{Lions}) used the above pressureless flow to show that for every $1 \leq p<\infty, T>0, M>0$, there exists a smooth shear-flow for which $\|u(0)\|_{W^{1, p}(\mathbb{T}^3)}=1$ and $\|u(T)\|_{W^{1, p}(\mathbb{T}^3)}>M$. Bardos and Titi \cite{BT} revisited this shear-flow and constructed 3D weak solutions which initially lies in $C^\alpha$ but does not belong to any $C^\beta$ for any $t>0$ and $1>\beta>\alpha^2$ (solutions exhibit instantaneous loss of smoothness in $C^\alpha$ for any $0<\alpha<1$). By this technique, the ill-posedness in the Zygmund space $B_{\infty, \infty}^1$ \cite{BT} and in $\mathrm{LL}_\alpha$ for any $0<\alpha \leq 1$ are obtained by Misiołek and Yoneda \cite{MYcm}.

As pointed out by Kato and Lai \cite{KLjfa}, the question of continuous dependence has long presented a considerable challenge in
the case of the Euler equations and in fact it was established first in \cite{EM70} by non-PDE
techniques.  The continuity of the solution map of the Euler equations in $W^{s,p}$ was obtained (see e.g. \cite{EM70,KLjfa,KPduke} or Appendix in \cite{MYtams}). We would like to mention that the result of Himonas and Misio{\l}ek \cite{HMcmp} who proved that the solution map for the Euler equations in bi(tri)-dimension is not uniformly continuous in Sobolev spaces $H^s(\mathbb{T}^d)$ for $s\in \R$ and in $H^s(\mathbb{R}^d)$ for any $s>0$. Bourgain and Li \cite{BLcmp} settled the border line case $s = 0$.
Cheskidov and Shvydkoy \cite{CS} constructed an initial data in the form
$$u_0(x)=\vec{e}_1\cos x_2+\vec{e}_2\sum\limits^\infty_{j=0}2^{-js}\cos(2^jx_1),$$
 and proved the periodic solutions of the Euler equations are discontinuous (cannot be continuous as a function
of the time variable) in time at $t = 0$ in the Besov spaces $B^s_{p,\infty}(\mathbb{T}^d)$ where $s > 0$ if $2 < p \leq \infty$ and $s>d(2/p-1)$
if $1 \leq p \leq 2$. In particular, it follows that the Cauchy problem \eqref{E} is not well-posed in the sense of Hadamard in $\mathcal{C}([0, T),B^s_{p,\infty}(\mathbb{T}^d)$, although it is known that the corresponding solution map is well defined in $B^s_{p,\infty}(\mathbb{T}^d)$ (see
for instance \cite[Theorem 7.1]{BCD}).
More recently, a big breakthrough is due to Bourgain and Li in their series of papers \cite{BLim,BLgfa,BLimrn}, who proved the strong local ill-posedness (smooth solutions exhibit instantaneous blowup) of the Euler equations in borderline spaces such as $W^{d/p+1,p}$ for any $1 \leq p <\infty$ and $B^{d/p+1}_{p,q}$ for any $1 \leq p <\infty,\ 1 <q \leq \infty$ as well as in the standard spaces $C^k$ and $C^{k-1,1}$ for any integer $k\geq1$ when $d=2,3$ (see also Elgindi and Masmoudi \cite{EM}). The situation is especially intriguing in view of the wellposedness results established by Lichtenstein \cite{LL} and Gunther \cite{Gu} in H\"{o}lder spaces $C^{k, \alpha}$ with inter $k\geq 1$ and $0<\alpha<1$. Subsequently, Misio{\l}ek and Yoneda \cite{MYma} showed that the 2D Euler equations are not locally well-posed in the sense of Hardamard in $C^1$ space and in Besov space $B^1_{\infty,1}$. Based on a DiPerna-Majda type shear flow, Misio{\l}ek and Yoneda \cite{MYtams} showed that the solution map for the Euler equations is not even continuous in the space of H\"{o}lder continuous functions and thus not locally Hadamard well-posed in $C^{1,\alpha}$ with any $\alpha\in(0,1)$ and the continuity of this map is restored in the little H\"{o}lder space $c^{1,\alpha}$.
As pointed out by Misio{\l}ek-Yoneda \cite[Remark 1.4]{MYtams}, their proof is based on a local property of $C^{1,\alpha}(\R^d)$ to construct a counterexample. Also, they mention that it would be interesting to find an explicit counterexample in the Besov space framework $B^{1+\alpha}_{\infty,\infty}$. We would like to emphasize that, the question of continuous dependence of solutions for the Euler equations in more Besov spaces $B_{p, \infty}^s$ with respect to initial conditions has not been explicitly addressed although the existence and uniqueness of $B_{p, \infty}^s$ solutions have been obtained in \cite[Theorem 7.1 on pp293, (2011)]{BCD}. Our main goal in this paper is to revisit the picture of local well-posedness for the Euler equations in more Besov spaces $B_{p, \infty}^s$ with $s>1+d/p$ and $1\leq p\leq\infty$. We shall prove the lack of continuous dependence of the data-to-solution map of the Euler equations which essentially breaks down the well-posedness in the sense of Hadamard.

\subsection{Main results}
From now on, we denote the data-to-solution map of the Euler equations \eqref{E}
\begin{equation*}
\mathbf{S}_t:\begin{cases}
U_R\equiv\f\{u_0\in B_{p,\infty}^s: \|u_0\|_{B^{s}_{p,\infty}}\leq R,\;\mathrm{div\,} u_0=0\g\} \rightarrow L^\infty_T (B_{p, \infty}^{s}),\\
u_0\mapsto \mathbf{S}_t(u_0).
\end{cases}
\end{equation*}
Now, we state our main results as follows.
\begin{theorem}\label{th1}
Let $d\geq 2$. Assume that $(s,p)$ satisfies that
\bal\label{con1}
s>1+\frac dp \quad\text{with}\quad p\in[1,\infty].
\end{align}
The data-to-solution map $
u_0\mapsto \mathbf{S}_t(u_0)
$ of the Euler equations \eqref{E}
is not continuous from any bounded subset in $B^s_{p,\infty}$ into $L^\infty_T(B^s_{p,\infty})$. More precisely, there exists a divergence-free initial data $u_0\in B^{s}_{p,\infty}$ and a perturbation sequence of initial data $u^n_0\in B^{s}_{p,\infty}$ satisfying
\bbal
\|u^n_0-u_0\|_{B^s_{p,\infty}}\to 0,  \; \text{as}\;n\rightarrow \infty,
\end{align*}
which generate the corresponding solutions $\mathbf{S}_t(u_0)$ and $\mathbf{S}_t(u^n_0)$ of \eqref{E} respectively, such that $0<t_n\rightarrow 0$
\bbal
\f\|\mathbf{S}_{t_n}(u^n_0)-\mathbf{S}_{t_n}(u_0)\g\|_{B^s_{p,\infty}}\geq c_0,  \; \text{as}\;n\rightarrow \infty,
\end{align*}
with some positive constant $c_0$ depending on $d,p,s$ but independent of $n$.
\end{theorem}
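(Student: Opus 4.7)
The plan is to prove non-continuous dependence by an explicit construction combined with an approximate-solution argument, exploiting that the $B^s_{p,\infty}$-norm is a supremum over Littlewood--Paley blocks and hence is not controlled by density of smooth functions (unlike $B^s_{p,r}$ for $r<\infty$). The strategy is to choose initial data so that the Euler nonlinearity produces, in arbitrarily short time, a single Littlewood--Paley block whose contribution to $\|\cdot\|_{B^s_{p,\infty}}$ is bounded below by a fixed constant $c_0$, even though the initial perturbation vanishes in $B^s_{p,\infty}$ as $n\to\infty$.

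The concrete construction would start from a fixed divergence-free base profile $u_0\in B^s_{p,\infty}(\mathbb{R}^d)$ of shear / DiPerna--Majda type, whose unperturbed Euler evolution can be written (at least to leading order) in closed form. The perturbed data $u_0^n=u_0+v_n$ would be defined through a divergence-free $v_n$ with Fourier support essentially in an annulus $|\xi|\sim 2^n$ and amplitude tuned so that $\|v_n\|_{B^s_{p,\infty}}\to 0$ as $n\to\infty$. The direction and phase of $v_n$ must be aligned with the base flow so that the leading nonlinear interaction (either $u_0\cdot\nabla v_n + v_n\cdot\nabla u_0$ or $v_n\cdot\nabla v_n$, whichever dominates at the relevant frequency) reduces, after a suitable change of variables, to a one-dimensional inviscid Burgers-type equation in the high-frequency component. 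This careful choice of $u_0$ and $v_n$ is what the abstract calls the ``technical aspects'' of the construction.

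The new ingredient would be the explicit construction of an approximate solution $V^n(t)$ to this reduced Burgers problem, valid in $B^s_{p,\infty}$ on a short interval $[0,t_n]$ with $t_n\to 0$, and having the key property that nonlinear self-steepening produces a new Littlewood--Paley block at a higher scale whose size gives $\|V^n(t_n)\|_{B^s_{p,\infty}}\ge 2c_0$. The time $t_n$ is tuned precisely to balance the Burgers steepening rate against the smallness of $v_n$. With $V^n$ in hand, one closes the argument using Theorem~\ref{th0} together with a perturbation estimate (where the derivative loss is absorbed by working in $B^{s-1}_{p,\infty}$) to bound
\[
\bigl\|\mathbf{S}_{t_n}(u_0^n)-\mathbf{S}_{t_n}(u_0)-V^n(t_n)\bigr\|_{B^s_{p,\infty}}\le \tfrac{c_0}{2},
\]
along with the short-time continuity $\|\mathbf{S}_{t_n}(u_0)-u_0\|_{B^s_{p,\infty}}\to 0$ (again via Theorem~\ref{th0}); a triangle inequality then yields the required lower bound $\|\mathbf{S}_{t_n}(u_0^n)-\mathbf{S}_{t_n}(u_0)\|_{B^s_{p,\infty}}\ge c_0$.

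The hardest part will be the simultaneous control of three competing scales: the smallness of $v_n$ in the supremum-type $B^s_{p,\infty}$ norm, the size of the nonlinearly generated high-frequency mode at vanishing time $t_n$, and the $B^s_{p,\infty}$ error between the true Euler flow and the approximate Burgers solution. Standard nonlinear estimates in $B^s_{p,\infty}$ typically lose a logarithm or an endpoint-index factor that would destroy the balance. The explicit construction of the approximate Burgers solution is engineered precisely so as to extract the leading-order Fourier mode at the correct Littlewood--Paley level without any such loss, which is why the authors single it out as the key technical novelty of the proof.
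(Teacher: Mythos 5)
There is a genuine gap: the amplification mechanism you propose is not the one that works here, and as described it would not close. You take the perturbation $v_n$ to be supported at frequency $\sim 2^n$ with $\|v_n\|_{B^s_{p,\infty}}\to 0$, and then ask the nonlinearity to \emph{create} a new Littlewood--Paley block of size $O(1)$ in time $t_n\to 0$. That is a norm-inflation statement for the difference equation: since the difference $w=\mathbf{S}_t(u_0^n)-\mathbf{S}_t(u_0)$ solves a transport-type equation with Lipschitz coefficients, a single high-frequency block of weighted $L^p$ size $o(1)$ cannot grow to size $O(1)$, nor seed an $O(1)$ block at a higher scale, over a vanishing time interval; you give no construction that would produce such growth, and the paper explicitly disclaims reliance on any inflation mechanism of Bourgain--Li type. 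The paper's actual mechanism is the opposite: the base datum $u_0$ is a lacunary sum already carrying an $O(1)$ block (in the weight $2^{ns}\|\Delta_n\cdot\|_{L^p}$) at \emph{every} frequency $2^n$, and the perturbation $u_0^n-u_0=-\tfrac1n\nabla^\perp\bar\Phi$ is \emph{low-frequency} with amplitude $1/n$. Its only job is to alter the transport velocity by $O(1/n)$; over the time $t_n=\tfrac{8}{11}\pi n2^{-n}$ this shifts the phase of the frequency-$2^n$ block by $\tfrac1n\cdot t_n\cdot\tfrac{11}{8}2^n=\pi$, so the two transported $n$-th blocks de-correlate and their difference is $O(1)$ without any amplitude growth. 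The same choice of $t_n$ makes the error terms $t_n^22^n\sim n^22^{-n}$ and $t_n$ vanish, which is how the derivative loss is absorbed: the $B^{s-1}_{p,\infty}$ error bound $Ct_n^2$ is converted block-wise into $2^{ns}\|\Delta_n(\cdot)\|_{L^p}\le 2^nCt_n^2\to 0$, not by any abstract "absorption".

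Two further points. First, the intermediate object is not an approximate solution of the difference equation but a chain of explicit approximations of each solution separately: linearize the transport velocity to $u_0$, drop the pressure forcing, commute $\Delta_n$ through the flow, and finally replace the flow map by $x\mapsto x-tu_0(x)$, arriving at $(\Delta_nu_0)(x-tu_0(x))$, which is explicit enough to compute the phase shift; your single "Burgers approximate solution for the difference" skips all of this. Second, the ingredient $\|\mathbf{S}_{t_n}(u_0)-u_0\|_{B^s_{p,\infty}}\to 0$ that you invoke via Theorem \ref{th0} is false in the strong topology: for $r=\infty$ that theorem only gives weak continuity in time, and Corollary \ref{co2} of the paper proves that strong continuity at $t=0$ actually fails for such data. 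Your final triangle inequality does not strictly need it, but it cannot be used as stated.
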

Due to the fact that $B_{\infty,\infty}^{k+\alpha}=C^{k,\alpha}$ with $k\in \mathbb{N}^+$ and $\alpha\in(0,1)$, we have the following ill-posedness result holds for the Euler equations in H\"{o}lder spaces.
\begin{corollary}\label{co1}
Let $d\geq 2$. Assume that $k\in \mathbb{N}^+$ and $\alpha\in(0,1)$.
The data-to-solution map of the Euler equations \eqref{E} is not continuous from any bounded subset in $C^{k,\alpha}$ into $L^\infty_T(C^{k,\alpha})$.
\end{corollary}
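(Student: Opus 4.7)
The plan is to deduce Corollary \ref{co1} directly from Theorem \ref{th1} via the classical identification $B^{k+\alpha}_{\infty,\infty}(\R^d)=C^{k,\alpha}(\R^d)$ with equivalent norms, which is valid precisely because $k\in\mathbb{N}^+$ and $\alpha\in(0,1)$ force the regularity index $k+\alpha$ to be non-integer (so the Littlewood--Paley characterization of H\"older spaces applies without the logarithmic correction). I would first specialize Theorem \ref{th1} to the parameters $p=\infty$ and $s=k+\alpha$; since $\alpha>0$, we have $s=k+\alpha>1=1+d/\infty$, so condition \eqref{con1} is satisfied and Theorem \ref{th1} is applicable.

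Theorem \ref{th1} then supplies a divergence-free datum $u_0\in B^{s}_{\infty,\infty}(\R^d)$, a perturbation sequence $u_0^n$ with $\|u_0^n-u_0\|_{B^s_{\infty,\infty}}\to 0$, and a vanishing sequence of times $t_n\to 0^+$ such that the corresponding Euler solutions obey $\|\mathbf{S}_{t_n}(u_0^n)-\mathbf{S}_{t_n}(u_0)\|_{B^s_{\infty,\infty}}\geq c_0$. The remainder of the argument is a translation: using the norm equivalence, a bounded subset of $C^{k,\alpha}$ corresponds to a bounded subset of $B^s_{\infty,\infty}$, convergence in $B^s_{\infty,\infty}$ is equivalent to convergence in $C^{k,\alpha}$, and the uniform-in-$n$ lower bound persists, up to multiplication by the equivalence constant, after passing to the H\"older norm. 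This yields the desired lack of continuity of $\mathbf{S}_t$ from any bounded subset of $C^{k,\alpha}(\R^d)$ into $L^\infty_T(C^{k,\alpha}(\R^d))$.

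There is essentially no genuine obstacle in the corollary itself: the entirety of the analytic difficulty, including the construction of an approximate Burgers solution and the whole-space choice of data flagged in Remark \ref{re1}, has already been absorbed into Theorem \ref{th1}. The only point requiring an explicit citation in the final write-up is the equivalence $B^{k+\alpha}_{\infty,\infty}(\R^d)\simeq C^{k,\alpha}(\R^d)$, which can be taken from \cite{BCD}. Compared with \cite{MYtams}, whose argument is local and restricted to $k=1$, this route handles all integers $k\geq 1$ and all exponents $\alpha\in(0,1)$ in one stroke, thereby substantiating the claim in the abstract that Corollary \ref{co1} covers the earlier $C^{1,\alpha}$ ill-posedness result.
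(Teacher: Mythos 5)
Your proposal is correct and coincides with the paper's own (one-line) argument: the authors deduce Corollary \ref{co1} from Theorem \ref{th1} exactly by taking $p=\infty$, $s=k+\alpha$ (noting $s>1$ so \eqref{con1} holds) and invoking the identification $B^{k+\alpha}_{\infty,\infty}(\R^d)=C^{k,\alpha}(\R^d)$ for non-integer $k+\alpha$, as recorded in their remark on Besov norms. Nothing further is needed.
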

\begin{remark}\label{re2}
Combined the results (Existence and uniqueness) in Lichtenstein \cite{LL} and Gunther \cite{Gu} with Corollary (Non-continuous dependence), we stated that the Euler equations is not locally well-posed (in the sense of Hadamard) in the framework of H\"{o}lder spaces $C^{k,\alpha}$.
\end{remark}
\begin{remark}\label{re3}
As mentioned in \cite{MYtams}, the failure of continuity in Theorem \ref{th1} does not seem to be related to the mechanism described in \cite{BLgfa,EM} which essentially relies on unboundedness of the double Riesz transform in $L^\infty$. It can be explained by the fact that smooth functions are not dense in $B^s_{p,\infty}$. This phenomenon
should be compared with the results of \cite{HMcmp,BLcmp} where it is shown that the solution map cannot be uniformly continuous. However, the ill-posedness mechanism is different and stronger than those of \cite{HMcmp,BLcmp,lyz} since it demonstrates the lack of continuous dependence of the data-to-solution map of the Euler equations which essentially breaks down the well-posedness in the Hadamard's sense.
\end{remark}
As an application of the approach in proving Theorem \ref{th1}, we have
\begin{corollary}\label{co2} Let $d\geq 2$. Assume that $(s,p)$ satisfies \eqref{con1}.
The data-to-solution map $u_0\mapsto \mathbf{S}_t(u_0)$ of the Euler equations \eqref{E} is not continuous as a function of the time variable at time zero in $B^s_{p,\infty}(\R^d)$.
\end{corollary}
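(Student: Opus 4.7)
The plan is to reuse the machinery developed for Theorem \ref{th1} and superpose the high-frequency perturbations into a single initial datum, thereby producing one $u_0 \in B^s_{p,\infty}(\R^d)$ whose Euler flow fails to be continuous in time at $t=0$. The key point is that the perturbations $u^n_0 - u_0$ built for Theorem \ref{th1} are frequency-localized at disjoint dyadic scales, so they can be superposed without losing control of the $B^s_{p,\infty}$ norm.

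First, I would take the background datum from the proof of Theorem \ref{th1} and add to it the entire infinite sum of the high-frequency bumps that appear there one at a time. If the perturbations in Theorem \ref{th1} have the schematic form $u^n_0 - u_0 = 2^{-ns}\mathbf{w}_n$ with $\mathbf{w}_n$ a divergence-free profile spectrally supported near $|\xi|\sim 2^n$, I would take as new initial datum $\tilde u_0 := u_0 + \sum_{n\geq N_0} 2^{-ns}\mathbf{w}_n$. Because the summands live in essentially disjoint dyadic frequency annuli with amplitudes of order $2^{-ns}$, the sum lies in $B^s_{p,\infty}(\R^d)$ (though typically not in any $B^s_{p,r}$ with $r<\infty$, which is consistent with the positive theory of \cite{guo}).

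Second, I would analyse the Euler evolution of $\tilde u_0$ block by block, using the approximate-Burgers-solution argument of Theorem \ref{th1}. On the $n$-th Littlewood-Paley block the leading short-time effect is transport by the smooth background velocity, which at the natural time scale $t\sim 2^{-n}$ produces an $O(1)$ rotation of the oscillating phase of $\mathbf{w}_n$. For a sequence $t_n \to 0^+$ tuned to the $n$-th scale, this should give a block-wise lower bound of the form $\|\De_n(\mathbf{S}_{t_n}(\tilde u_0) - \tilde u_0)\|_{L^p} \gtrsim 2^{-ns}$, which translates directly into $\|\mathbf{S}_{t_n}(\tilde u_0) - \tilde u_0\|_{B^s_{p,\infty}} \gtrsim c_0 > 0$, i.e., the desired failure of time-continuity at $t=0$.

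The main obstacle is that the full Euler nonlinearity couples different frequency blocks through both the advection $u\cdot \nabla u$ and the Leray projector hidden in the pressure, so the clean single-block transport picture must be justified. As in the proof of Theorem \ref{th1}, I would handle this by splitting $\tilde u_0$ into a smooth low-frequency part (the background flow) and a high-frequency remainder, and then using paraproduct and commutator estimates to verify that the cross-block interactions contribute only $o(2^{-ns})$ in each $\De_n$ over the window $[0,t_n]$. This preserves the single-block transport picture and closes the block-wise lower bound, yielding the failure of continuity at $t=0$ for the single datum $\tilde u_0$.
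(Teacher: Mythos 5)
Your high-level plan --- encode everything into a single datum and show that self-transport rotates the phase of the $n$-th Littlewood--Paley block by an $O(1)$ amount at a time $t_n\to 0$ tuned to that block --- is indeed the strategy of the paper's Appendix. But the way you set it up contains a genuine gap. First, your premise about Theorem \ref{th1} is wrong: there the perturbations are \emph{not} high-frequency bumps $2^{-ns}\mathbf{w}_n$ localized at $|\xi|\sim 2^n$; they are $u_0^n-u_0=-\frac1n\nabla^\bot\bar{\Phi}$, a \emph{fixed low-frequency} profile with amplitude $1/n$ (Definition \ref{def-e1}). Superposing these literally gives the divergent series $\sum_n\frac1n\nabla^\bot\bar{\Phi}$, and superposing extra high-frequency divergence-free bumps instead does not reproduce their effect, which is to change the advecting velocity at the origin from $0$ to $-\frac1n$ so that over the time $t_n=\frac{8}{11}\pi n2^{-n}$ the accumulated phase shift $\frac{11}{8}2^nt_n\cdot\frac1n\pa_2\bar{\Phi}=\pi\pa_2\bar{\Phi}$ is of order one near $x=0$.

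Second, and this is the step that would fail: for the datum of Theorem \ref{th1} one has $u_0^{(1)}(\mathbf{0})=0$ (Lemma \ref{js2}) --- the oscillating factor is $\sin(\frac{11}{8}2^jx_1)$ and the transverse cut-off is even --- so the unperturbed background transport produces essentially \emph{no} phase rotation of the $n$-th block near the center of its profile. Adding blocks $2^{-ns}\mathbf{w}_n$ cannot repair this, since each is $O(2^{-ns})$ in $L^\infty$ and therefore changes the advecting velocity there only by $O(2^{-N_0 s})$, yielding a phase shift that is $o(1)$, not $O(1)$. This is exactly why the Appendix does not take ``Theorem \ref{th1} datum plus superposed perturbations'' but \emph{redesigns} the datum: $\sin$ is replaced by $\cos$ and the transverse factor $\bar{\phi}(x_2)$ by $\pa_{x_2}^{-1}\psi_1(x_2)/\psi_1(0)$, precisely so that $u_0^{(1)}(\mathbf{0})$ is a fixed nonzero constant; then $\mathbf{S}_{t_n}(u_0)-u_0$ is compared directly and the block-wise lower bound $2^{ns}\|(\Delta_nu_0)^{(2)}(x-t_nu_0(x))-(\Delta_nu_0)^{(2)}(x)\|_{L^p}\geq c\,2^{-\frac{d}{p}M_0}$ follows from Lemma \ref{tl3}. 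Your treatment of the cross-block interactions through the nonlinearity and the Leray projector (which the paper handles via Proposition \ref{4pr1}) is fine, but without this modification of the datum the key lower bound is not available.
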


\subsection{Difficulties and Strategies to the proof of Theorem \ref{th1}}

We comment on a few points of main difficulties and ideas.
Generally speaking, the continuity of the flow map in a lower norm by the interpolation analysis and in a solution norm by density of smooth
functions is relatively easy to obtain. However, it is unsatisfactory as a few final space, especially for Euler equations in $B^{s}_{p,\infty}(\R^d)$ since the lack of density of smooth functions brings the essential difficulty when proving the continuity with respect to the $B^{s}_{p,\infty}$-norm. For the period space, based on the construction of the explicit solutions of the incompressible Euler equations in the form
\bbal
&u(t,x)=\vec{e}_1+\vec{e}_2\sum\limits^\infty_{j=3}2^{-js}\cos \f(\frac{11}{8}2^{j}(x_1-t)\g),
\end{align*}
we can prove that the periodic solutions of Euler equations that are discontinuous in time at $t = 0$ in $B^s_{p,\infty}(\mathbb{T}^d)$ where $s > 0$ and $1\leq p \leq \infty$. Here we would like to emphasize that, the all aforementioned works (e.g. \cite{BT,MYtams}) involving the shear flow in the form \eqref{jql} do not address the whose space case since it does not belong to any $L^p(\R^d)$ spaces with $1\leq p<\infty$ and thus $B^{s}_{p,\infty}(\R^d)$ with $s>0$. The formulation and the proof of Theorem \ref{th1} for the non-period case requires a deep understanding of how the end-point space $B^{s}_{p,\infty}$-topology changes under the Euler dynamics. In order to introduce the new idea, let us consider the following toy-model:

\noindent{\bf A toy-model: 1-D inviscid Burgers equation.} Let $u_0\in B^s_{p,\infty}$ with $s>1+\fr{1}p$ and $p\in[1,\infty]$.
We consider the Cauchy problem for the one dimensional Burgers equation
\begin{align}\label{B}
\begin{cases}
\pa_t u+u\pa_xu=0,\\
u(0,x)=u_0(x),
\end{cases}
\end{align}
has a unique solution $u(t,x)=u_0(\eta_{u}^{-1}(t,x))$, where the {\it flow map or the particle trajectory map} $\eta_{u}(t,x)$ is induced by the Lipschitz velocity field $u$ and $\eta_{u}^{-1}$ is the inverse of $\eta_{u}$.

\noindent{\bf Step 1:}\;
A basic observation is that, \eqref{B} has a `good' approximation
\begin{align}\label{B1}
\begin{cases}
\pa_t u^{\rm{ap},1}+u_0\pa_xu^{\rm{ap},1}=0,\\
u^{\rm{ap},1}(0,x)=u_0(x).
\end{cases}
\end{align}
Indeed, we have for short time
$u^{\rm{ap},1}-u\rightarrow 0$ in $B^{s-1}_{p,\infty}$.
Compared with \eqref{B}, a nice feature of \eqref{B1} is that it has a unique explicit solution $u^{\rm{ap},1}(t,x)=u_0(\eta_{u_0}^{-1}(t,x))=u_0(x-tu_0(\eta_{u_0}^{-1}(t,x)))$ where $\eta_{u_0}(t,x)=x+tu_0(x)$. Here $\eta_{u_0}^{-1}$ can not be solved explicitly. On the other hand, to put this approximation solution into $B^{s}_{p,\infty}$, we encounter the difficulty that the operator $\Delta_n$ actions on the composition with Lipschitz map. To bypass these, we can furthermore decompose the approximation solution $u^{\rm{ap},1}$. Precisely speaking, due to the linear and local uniqueness of \eqref{B1}, one has the decomposition
$$u^{\rm{ap},1}=\sum_{n=-1}^\infty \Delta_nu^{\rm{ap},1}=\sum_{n=-1}^\infty u_n^{\rm{ap},2},$$
where $u_n^{\rm{ap},2}$ solves
\begin{align}\label{B2}
\begin{cases}
\pa_t u_n^{\rm{ap},2}+u_0\pa_xu_n^{\rm{ap},2}=0,\\
u_n^{\rm{ap},2}(0,x)=\Delta_nu_0(x).
\end{cases}
\end{align}
Based on this, we find that the solution $u_n^{\rm{ap},2}=(\Delta_nu_0)(x-tu_0(\eta_{u_0}^{-1}(t,x)))$ of \eqref{B2} is a `good' approximation of $\Delta_nu^{\rm{ap},1}$.
Since $\eta_{u_0}^{-1}(t,x))$ can be approximated by $x$ for short time, we know that $u_n^{\rm{ap},2}=(\Delta_nu_0)(x-tu_0(\eta_{u_0}^{-1}(t,x)))$ has a `good' approximation $u_n^{\rm{ap},3}=(\Delta_nu_0)(x-tu_0(x))$, which solves
\begin{align*}
\begin{cases}
\pa_t u_n^{\rm{ap},3}+u_0\pa_xu_n^{\rm{ap},3}=-tu_0\pa_xu_0(\pa_x\Delta_nu_0)(x-tu_0(x)), \\
u_n^{\rm{ap},3}(0,x)=\Delta_nu_0(x).
\end{cases}
\end{align*}
In summary, we decompose the solutions of the original Burgers system \eqref{B}
into
\begin{align*}
u=u-u^{\rm{ap},1}+\sum_{n=-1}^\infty (\Delta_nu^{\rm{ap},1}-u^{\rm{ap},2}_n)+\sum_{n=-1}^\infty(u^{\rm{ap},2}_n-u^{\rm{ap},3}_n)+\sum_{n=-1}^\infty u^{\rm{ap},3}_n
\end{align*}
or
\begin{align*}
\Delta_nu=\Delta_n(u-u^{\rm{ap},1})+\Delta_nu^{\rm{ap},1}-u^{\rm{ap},2}_n+u^{\rm{ap},2}_n-u^{\rm{ap},3}_n+u^{\rm{ap},3}_n.
\end{align*}

\noindent{\bf Step 2:}\; Based on {\bf Step 1}, we transform the difference of unknown solutions to the original and perturbed system into that of known approximation solutions. To ensure that the difference between the original and perturbed solution to \eqref{B} in the $B^s_{p,\infty}$-topology is bounded below by a positive constant at any later time, the crucial and skillful step is to construct special original initial data $u_0\in B^s_{p,\infty}$ and perturbed initial data $u_0^n\in B^s_{p,\infty}$ to satisfy that the following three properties
\begin{enumerate}
  \item $\|u_0\|_{B^s_{p,\infty}}\leq C$ and $\|u^{n}_{0}\|_{B^s_{p,\infty}}\leq C$;
  \item $\|u^{n}_{0}-u_0\|_{B^s_{p,\infty}}\to 0$ as $n\to\infty$;
  \item $2^{ns}\|\Delta_n(u_n-u)(t_n)\|_{L^p}\geq \eta_0-C\ep_n$, where $\ep_n\to0$,
\end{enumerate}
We have roughly described the whole strategy of the proof although some technical points could not be mentioned here. For more details see Section \ref{sec3}.

\subsection{Organization of our paper}
In Section \ref{sec2}, we list some notations and known results which will be used in the sequel. In order to expound the main ideas and the construction of initial data, in Section \ref{sec3}, we prove the non-continuous dependence of $B^s_{p,\infty}(\R)$ solution for the 1-D transport type equation. In Section \ref{sec4} (periodic case) and Section \ref{sec5} (non-periodic case), we complete the proof of Theorem \ref{th1}.

\section{Preliminaries}\label{sec2}
{\bf Notation}\; The vector $u(x)=(u_1(x),u_2(x),\ldots,u_d(x))$ with $x=(x_1,x_2,\ldots,x_d)$ and the $i$-th component of the vector $u(x)$ is given by $u^{(i)}(x)$. We write functions depending on time and space as $u(t,x)$ and partial derivatives in time and space are respectively
denoted by $\pa_tu$ and $\pa_{x_i}u$, where $i = 1, \ldots,d$. The metric $\nabla u$ denotes the gradient of $u$ with respect to the $x$ variable, whose $(i,j)$-th component is given by $(\nabla u)_{ij}=\pa_iu_j$ with $1\leq i,j\leq d$.
Throughout this paper, we use letters $C$ (resp. $C_s$) to denote various positive absolute constants (resp. dependent of $s$) whose values may vary from a line to another.
Given a Banach space $X$, we denote its norm by $\|\cdot\|_{X}$. For $I\subset\R$, we denote by $\mathcal{C}(I;X)$ the set of continuous functions on $I$ with values in $X$. Sometimes we will denote $L^p(0,T;X)$ by $L_T^p(X)$. Let us recall that for all $u\in \mathcal{S}$, the Fourier transform $\mathcal{F}u$, also denoted by $\widehat{u}$, is defined by
$$
\mathcal{F}u(\xi)=\widehat{u}(\xi)=\int_{\mathbb{R}^d}e^{-\mathrm{i}x\cd \xi}u(x)\dd x \quad\text{for any}\; \xi\in\mathbb{R}^d.
$$
The inverse Fourier transform allows us to recover $u$ from $\widehat{u}$:
$$
\mathcal{F}^{-1}\widehat{u}(x)=(2\pi)^{-d}\int_{\R^d}e^{\mathrm{i}x\cdot\xi}\widehat{u}(\xi)\dd\xi.$$
We denote the projection
\bbal
&\mathcal{P}: L^{p}(\mathbb{R}^{d}) \rightarrow L_{\sigma}^{p}(\mathbb{R}^{d}) \equiv \overline{\left\{f \in \mathcal{C}^\infty_{0}(\mathbb{R}^{d}) ; {\rm{div}} f=0\right\}}^{\|\cdot\|_{L^{p}(\mathbb{R}^{d})}},\quad p\in(1,\infty),\\
&\mathcal{Q}=\mathrm{Id}-\mathcal{P}.
\end{align*}
In $\mathbb{R}^{d}$, $\mathcal{P}$ can be defined by $\mathcal{P}= \mathrm{Id}+(-\Delta)^{-1}\nabla {\rm{div}}$, or equivalently, $\mathcal{P}=(\mathcal{P}_{i j})_{1 \leqslant i, j \leqslant d}$, where $\mathcal{P}_{i j} \equiv \delta_{i j}+R_{i} R_{j}$ with $\delta_{i j}$ being the Kronecker delta ($\delta_{i j}=0$ for $i\neq j$ and $\delta_{i i}=0$) and $R_{i}$ being the Riesz transform with symbol $-\mathrm{i}\xi_1/|\xi|$. Obviously, $\mathcal{Q}= -(-\Delta)^{-1}\nabla {\rm{div}}$, and if $\div\, u=\div\, v=0$, it holds that
$
\mathcal{Q}(u\cdot\na v)= \mathcal{Q}(v\cdot\na u).
$
Next, we will recall some facts about the Littlewood-Paley decomposition, the nonhomogeneous Besov spaces and their some useful properties (see \cite{BCD} for more details).
Choose a radial, non-negative, smooth function $\vartheta:\R^d\mapsto [0,1]$ such that
 ${\rm{supp}} \,\vartheta\subset B(0, 4/3)$ and $\vartheta(\xi)\equiv1$ for $|\xi|\leq3/4$.
Setting $\varphi(\xi):=\vartheta(\xi/2)-\vartheta(\xi)$, then $\varphi$ satisfies that
 ${\rm{supp}} \;\varphi\subset \left\{\xi\in \mathbb{R}^d: 3/4\leq|\xi|\leq8/3\right\}$ and $\varphi(\xi)\equiv 1$ for $4/3\leq |\xi|\leq 3/2.$
For every $u\in \mathcal{S'}(\mathbb{R}^d)$, the inhomogeneous dyadic blocks ${\Delta}_j$ are defined as follows
\begin{align*}
&\Delta_ju=0,\; \text{if}\; j\leq-2;\quad
\Delta_{-1}u=\vartheta(D)u;\quad\Delta_ju=\varphi(2^{-j}D)u,\;  \text{if}\;j\geq0.
\end{align*}
We recall the definition of the Besov Spaces and norms.
\begin{definition}[\cite{BCD}]
Let $s\in\mathbb{R}$ and $(p,r)\in[1, \infty]^2$. The nonhomogeneous and homogeneous Besov spaces are defined, respectively,
$$
B^{s}_{p,r}(\mathbb{R}^d):=\f\{f\in \mathcal{S}':\;\|f\|_{B^{s}_{p,r}(\mathbb{R}^d)}:=\left\|2^{js}\|\Delta_jf\|_{L^p(\mathbb{R}^d)}\right\|_{\ell^r(j\geq-1)}<\infty\g\}
$$
and
$$
\dot{B}^{s}_{p,r}(\mathbb{R}^d):=\f\{f\in \mathcal{S}'_h:\;\|f\|_{\dot{B}^{s}_{p,r}(\mathbb{R}^d)}:=\left\|2^{js}\|\dot{\Delta}_jf\|_{L^p(\mathbb{R}^d)}\right\|_{\ell^r(j\in \mathbb{Z})}<\infty\g\}.
$$
\end{definition}
\begin{remark} For any $s>0$ and $(p,r)\in[1, \infty]^2$, then $B^{s}_{p,r}=\dot{B}^{s}_{p,r}\cap L^p$ and
$$\|f\|_{B^{s}_{p,r}}\approx \|f\|_{L^{p}}+\|f\|_{\dot{B}^{s}_{p,r}}.$$
In particular, if $s=k+\alpha$ is not an integer, then $B_{\infty, \infty}^s$ is the H\"{o}lder space $C^{k, \alpha}$ with the standard norm
$$
\|f\|_{k, \alpha}=\|f\|_{C^k}+[D^k f]_\alpha
$$
where
$$
[D^k f]_\alpha=\sum_{|\beta|=k} \sup _{x \neq y} \frac{\left|D^\beta f(x)-D^\beta f(y)\right|}{|x-y|^\alpha}, \quad 0<\alpha<1, k \in \mathbb{N}.
$$
\end{remark}
Next we recall the following product law which will be used often throughout the paper.
\begin{lemma}[\cite{BCD}]\label{lp}
Assume $(s,p)$ satisfies \eqref{con1} and $\sigma>0$. Then
 there exists a constant $C$, depending only on $d,p,r,\sigma$ or $s$ such that
$$\|fg\|_{B^{\sigma}_{p,\infty}(\mathbb{R}^d)}\leq C\f(\|f\|_{L^\infty(\mathbb{R}^d)}\|g\|_{B^\sigma_{p,\infty}(\mathbb{R}^d)}
+\|g\|_{L^\infty(\mathbb{R}^d)}\|f\|_{B^{\sigma}_{p,\infty}(\mathbb{R}^d)}\g),\quad \forall f,g\in L^\infty\cap B^{\sigma}_{p,\infty}(\mathbb{R}^d).$$
Furthermore, due to the embedding $B^{s-1}_{p,\infty}\hookrightarrow L^{\infty}$, there holds for any $(f,g)\in B^{s-1}_{p,r}\times B^s_{p,\infty}(\mathbb{R}^d)$
$$\|f\cdot \na g\|_{B^{s-1}_{p,\infty}(\mathbb{R}^d)}\leq C\|f\|_{B^{s-1}_{p,r\infty}(\mathbb{R}^d)}\|g\|_{B^s_{p,\infty}(\mathbb{R}^d)},$$
and for any $(f,g)\in B^{s}_{p,\infty}\times B^{s+1}_{p,\infty}(\mathbb{R}^d)$
\begin{align*}
\|f\cdot \na g\|_{B^{s}_{p,\infty}(\mathbb{R}^d)}\leq C\f(\|f\|_{B^{s-1}_{p,\infty}(\mathbb{R}^d)}\|g\|_{B^{s+1}_{p,\infty}(\mathbb{R}^d)}+\|f\|_{B^{s}_{p,\infty}(\mathbb{R}^d)}\|g\|_{B^{s}_{p,\infty}(\mathbb{R}^d)}\g).
\end{align*}
\end{lemma}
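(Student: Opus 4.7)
The plan is to reduce everything to Bony's paraproduct decomposition $fg = T_f g + T_g f + R(f,g)$, where $T_f g := \sum_{j} S_{j-1} f \, \Delta_j g$ and $R(f,g) := \sum_{|j-k|\le 1} \Delta_j f \, \Delta_k g$. All three inequalities in Lemma \ref{lp} should follow from a single product estimate on $fg$, with the second and third derived by combining the first with the embedding $B^{s-1}_{p,\infty}\hookrightarrow L^\infty$ that hypothesis \eqref{con1} supplies.

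For the paraproduct $T_f g$, I would use that the spectrum of $S_{j-1}f \cdot \Delta_j g$ is localized in an annulus of size $2^j$; Bernstein's inequality then gives $\|\Delta_j T_f g\|_{L^p} \lesssim \|S_{j-1}f\|_{L^\infty}\|\Delta_j g\|_{L^p} \le \|f\|_{L^\infty}\|\Delta_j g\|_{L^p}$, hence $\|T_f g\|_{B^\sigma_{p,\infty}} \le C\|f\|_{L^\infty}\|g\|_{B^\sigma_{p,\infty}}$ with no restriction on $\sigma$. The symmetric piece $T_g f$ is bounded by $\|g\|_{L^\infty}\|f\|_{B^\sigma_{p,\infty}}$ in the same way. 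For the remainder $R(f,g)$, the localization $\Delta_j(\Delta_k f \, \tilde\Delta_k g) \equiv 0$ for $k \le j - N_0$ yields $\|\Delta_j R(f,g)\|_{L^p} \lesssim \sum_{k \ge j - N_0} \|\Delta_k f\|_{L^\infty}\|\tilde\Delta_k g\|_{L^p}$; multiplying by $2^{j\sigma}$ and taking the supremum in $j$ produces a geometric sum $\sum_{k\ge j-N_0} 2^{(j-k)\sigma}$, which converges precisely because $\sigma > 0$. This gives $\|R(f,g)\|_{B^\sigma_{p,\infty}} \le C\|f\|_{L^\infty}\|g\|_{B^\sigma_{p,\infty}}$, and summing the three pieces yields the first inequality.

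The second and third inequalities drop out as corollaries. Applying the first estimate to the product $f \cdot \nabla g$ with $\sigma = s-1 > d/p$, together with $\|\nabla g\|_{B^{s-1}_{p,\infty}} \le \|g\|_{B^s_{p,\infty}}$ and the embeddings $B^{s-1}_{p,\infty}\hookrightarrow L^\infty$ and $B^{s}_{p,\infty}\hookrightarrow C^{0,1}$ (both available because $s > 1 + d/p$), collapses the right-hand side to $C\|f\|_{B^{s-1}_{p,\infty}}\|g\|_{B^s_{p,\infty}}$. The same argument at level $\sigma = s$, using $\|\nabla g\|_{B^s_{p,\infty}} \le \|g\|_{B^{s+1}_{p,\infty}}$ and $\|\nabla g\|_{L^\infty} \le C\|g\|_{B^s_{p,\infty}}$, gives the third.

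The main technical obstacle is the $r = \infty$ end-point in the remainder estimate: the usual proof for $r < \infty$ proceeds via Young's inequality for $\ell^r$ convolution and breaks down at $r = \infty$ in its naive form. The workaround is exactly what is described above: keep the $L^\infty$ norm on one factor of $R(f,g)$ and exploit strict positivity of $\sigma$ so that the geometric series $\sum_{k\ge j-N_0} 2^{(j-k)\sigma}$ is summable without any $\ell^1$ input, which is where the hypothesis $\sigma > 0$ is essential. A secondary care point is the low-frequency block $\Delta_{-1}$ in the nonhomogeneous setting, which I would handle by a direct Bernstein estimate using the assumption $f,g \in L^\infty \cap B^\sigma_{p,\infty}$.
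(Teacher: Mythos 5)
Your argument is correct: the Bony decomposition $fg=T_fg+T_gf+R(f,g)$ with the $L^\infty$-on-one-factor treatment of the remainder (using only $\sigma>0$ for the geometric sum, so the $\ell^\infty$ endpoint causes no trouble), followed by the embeddings $B^{s-1}_{p,\infty}\hookrightarrow L^\infty$ and $\|\nabla g\|_{B^{\sigma}_{p,\infty}}\lesssim\|g\|_{B^{\sigma+1}_{p,\infty}}$ to deduce the two $f\cdot\nabla g$ estimates, is exactly the standard proof. The paper itself offers no proof of this lemma — it is simply quoted from \cite{BCD} — and your argument reproduces the one given there, so there is nothing to reconcile.
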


\begin{lemma}[\cite{guo}]\label{lem:P}
Assume that $(s,p)$ satisfies \eqref{con1}. Then there exists a constant $C$, depending only on $d,p,r,s$, such that for any $u,v\in B^s_{p,\infty}(\mathbb{R}^d)$ with $\mathrm{div\,} u=\mathrm{div\,} v=0$
\begin{align*}
&\|\mathcal{Q}(u\cdot \na v)\|_{B^s_{p,\infty}(\mathbb{R}^d)}\leq C \|u\|_{B^s_{p,r}(\mathbb{R}^d)}\|v\|_{B^s_{p,\infty}(\mathbb{R}^d)},\\
&\|\mathcal{Q}(u\cdot \na v)\|_{B^{s-1}_{p,\infty}(\mathbb{R}^d)}\leq C \min\f\{\|u\|_{B^{s-1}_{p,\infty}(\mathbb{R}^d)}\|v\|_{B^s_{p,\infty}(\mathbb{R}^d)},\, \|v\|_{B^{s-1}_{p,\infty}(\mathbb{R}^d)}\|u\|_{B^s_{p,\infty}(\mathbb{R}^d)}\g\}.
\end{align*}
\end{lemma}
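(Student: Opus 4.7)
The plan is to use the two divergence-free conditions to trade the apparently singular projector $\q=-(-\De)^{-1}\na\,\mathrm{div}$ for the milder operator $\na(-\De)^{-1}$, which behaves like integration by one derivative on dyadic annuli. Since $\mathrm{div}\,v=0$, a direct computation gives
$$\mathrm{div}(u\cd\na v)=\pa_i(u_m\pa_m v_i)=\pa_i u_m\,\pa_m v_i,$$
so that
$$\q(u\cd\na v)=-\na(-\De)^{-1}(\pa_i u_m\,\pa_m v_i).$$
Running the same calculation under $\mathrm{div}\,u=0$ instead yields the symmetry $\q(u\cd\na v)=\q(v\cd\na u)$ already recorded in the preliminaries; this is exactly what supplies the minimum in the second bound.

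For the first estimate, I would split the norm into the low-frequency block $\De_{-1}$ and the high-frequency blocks $\De_j$ with $j\geq 0$. On each dyadic annulus $\{|\xi|\sim 2^j\}$ the symbol of $\na(-\De)^{-1}$ is smooth and of order $2^{-j}$, so a Bernstein/Mikhlin-type estimate yields
$$2^{js}\|\De_j\q(u\cd\na v)\|_{L^p}\les 2^{j(s-1)}\|\widetilde{\De}_j(\pa_i u_m\,\pa_m v_i)\|_{L^p},\qquad j\geq 0,$$
with $\widetilde{\De}_j$ a mildly fattened block. Taking $\sup_{j\geq 0}$ bounds the high-frequency part by $\|\pa_i u_m\,\pa_m v_i\|_{B^{s-1}_{p,\infty}}$, to which Lemma \ref{lp} (with $\sigma=s-1>d/p$) applies; combined with the embeddings $B^{s-1}_{p,\infty}\hookrightarrow L^\infty$ and $B^s_{p,\infty}\hookrightarrow W^{1,\infty}$ this delivers a bound by $\|u\|_{B^s_{p,\infty}}\|v\|_{B^s_{p,\infty}}$. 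For the $\De_{-1}$ block I would instead use the equivalent form $(\q(u\cd\na v))_k=-\pa_k R_l R_m(u_m v_l)$: the multiplier $\De_{-1}\pa_k R_l R_m$ has a smooth, bounded, compactly supported symbol near the origin, hence is convolution with an $L^1$ Schwartz kernel, so Young's inequality gives
$$\|\De_{-1}\q(u\cd\na v)\|_{L^p}\les \|u\otimes v\|_{L^p}\les \|u\|_{L^\infty}\|v\|_{L^p}\les \|u\|_{B^s_{p,\infty}}\|v\|_{B^s_{p,\infty}}$$
uniformly for $p\in[1,\infty]$.

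The second estimate follows from the same dyadic dichotomy with the roles of $s$ and $s-1$ shifted by one derivative. In the high-frequency regime I would invoke the asymmetric form of Lemma \ref{lp},
$$\|u\cd\na v\|_{B^{s-1}_{p,\infty}}\les \|u\|_{B^{s-1}_{p,\infty}}\|v\|_{B^s_{p,\infty}},$$
which, combined with the same $\De_{-1}$ kernel bound, produces $\|\q(u\cd\na v)\|_{B^{s-1}_{p,\infty}}\les \|u\|_{B^{s-1}_{p,\infty}}\|v\|_{B^s_{p,\infty}}$. Exchanging the roles of $u$ and $v$ via the symmetry $\q(u\cd\na v)=\q(v\cd\na u)$ then yields the other term inside the minimum.

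The main obstacle is that the Leray projector $\q$ is not bounded on $L^p$ nor on $B^s_{p,\infty}$ at the endpoints $p\in\{1,\infty\}$, so one cannot simply discard $\q$ and quote a product law. The divergence-free cancellation is precisely what rescues the argument: it converts one derivative hidden inside $\q$ into an explicit scalar bilinear expression in $\na u$ and $\na v$ to which Lemma \ref{lp} applies cleanly, while the residual low-frequency singularity of $\q$ is tamed by a Schwartz-kernel estimate rather than by $L^p$-boundedness of the Riesz transforms. I expect this high-versus-low frequency dichotomy — divergence-free rewriting above, Young's inequality below — to be the only delicate point, with the rest being routine Bony-calculus estimates.
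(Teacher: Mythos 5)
The paper does not prove this lemma; it is imported verbatim from \cite{guo}. Your argument is essentially the standard one used there: exploit $\mathrm{div}\,v=0$ to write $\mathcal{Q}(u\cdot\nabla v)=-\nabla(-\Delta)^{-1}(\partial_i u_m\,\partial_m v_i)$ (resp.\ $\mathrm{div}\,u=0$ to write $u\cdot\nabla v=\mathrm{div}(u\otimes v)$ and to get the symmetry producing the minimum), gain one derivative on the dyadic annuli, apply Lemma~\ref{lp}, and treat $\Delta_{-1}$ by a kernel bound. All the steps close: the high-frequency multiplier estimates are the standard Bernstein-type lemma for annuli, and the product laws are exactly those of Lemma~\ref{lp} with $\sigma=s-1>d/p$.

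One justification is stated imprecisely, and it happens to sit exactly at the delicate endpoint $p\in\{1,\infty\}$: the symbol of $\Delta_{-1}\partial_k R_lR_m$, namely $\mathrm{i}\,\vartheta(\xi)\,\xi_k\xi_l\xi_m/|\xi|^2$, is \emph{not} smooth at $\xi=0$, so its kernel is not Schwartz. The correct (and still elementary) statement is that this symbol vanishes linearly at the origin and is homogeneous of degree one there modulo the smooth cutoff, so its inverse Fourier transform is a continuous function decaying like $|x|^{-d-1}$, hence in $L^1(\mathbb{R}^d)$; Young's inequality then applies uniformly in $p\in[1,\infty]$ as you claim. This is precisely why the extra derivative $\partial_k$ extracted from the divergence-free structure is indispensable: without it the low-frequency symbol would be homogeneous of degree zero at the origin and the kernel would fail to be integrable. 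With that one sentence repaired, the proof is complete and matches the cited one.
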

We recall the commutator estimation which will be used in the sequel.
\begin{lemma}[Commutator estimation, \cite{BCD}]\label{jhz}
For $1\leq p\leq\infty$ and $s>0$, there exists a constant $C=C(p,s)>0$ such that
 \begin{align*}
&\sup_{k\geq-1}\f(2^{ks}\|[\Delta_{k},v]\cdot\nabla f\|_{L^{p}(\mathbb{R}^d)}\g)\leq C\f(\|\nabla v\|_{L^{\infty}(\mathbb{R}^d)}\|f\|_{{B}_{p,\infty}^{s}(\mathbb{R}^d)}+\|\nabla f\|_{L^{\infty}(\mathbb{R}^d)}\|\nabla v\|_{{ B}_{p,\infty}^{s-1}(\mathbb{R}^d)}\g),
\end{align*}
where we denote the standard commutator $[\Delta_k,v]\cdot\nabla f=\Delta_k(v\cdot\nabla f)-v\cdot\Delta_{k}\nabla f$.
\end{lemma}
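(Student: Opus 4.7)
My plan is to prove the commutator estimate by Bony's paraproduct decomposition. Writing $v\cd\na f = T_{v}\cd\na f + T_{\na f}\cd v + R(v,\na f)$, where $T$ denotes the paraproduct and $R$ the remainder, and inserting this identity into both $\Delta_k(v\cd\na f)$ and $v\cd\na\Delta_k f$ and subtracting, the commutator splits into three pieces:
$$[\Delta_k,v]\cd\na f = [\Delta_k,T_v]\cd\na f + \f(\Delta_k T_{\na f}\cd v - T_{\na\Delta_k f}\cd v\g) + \f(\Delta_k R(v,\na f) - R(v,\na\Delta_k f)\g).$$
Each of these three pieces will be estimated separately in $L^p$, with an eye on producing one or the other of the two right-hand-side types in the lemma.

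The paraproduct commutator $[\Delta_k,T_v]\cd\na f = \sum_j [\Delta_k,S_{j-1}v]\cd\Delta_j\na f$ is the classical term. Fourier-support considerations restrict the sum to $|j-k|\leq 4$, and a first-order Taylor expansion of $S_{j-1}v$ in the convolution kernel of $\Delta_k$, combined with Bernstein's inequality, yields
$\|[\Delta_k,S_{j-1}v]\cd\Delta_j\na f\|_{L^p}\les 2^{-k}\|\na v\|_{L^\infty}\|\Delta_j\na f\|_{L^p}\les \|\na v\|_{L^\infty}\|\Delta_j f\|_{L^p}$. Multiplying by $2^{ks}\approx 2^{js}$ and summing the finitely many contributing $j$ yields the first right-hand-side term $\|\na v\|_{L^\infty}\|f\|_{B^s_{p,\infty}}$.

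For the other two pieces I put $\na f$ into $L^\infty$ and use the reshuffle $2^{js}\|\Delta_j v\|_{L^p}\approx 2^{j(s-1)}\|\Delta_j\na v\|_{L^p}$ to produce $\|\na v\|_{B^{s-1}_{p,\infty}}$ rather than $\|v\|_{B^s_{p,\infty}}$. The ``high--low'' piece is again spectrally localized to $|j-k|\leq 4$, and each summand is estimated via the H\"older bound $\|S_{j-1}\na f\|_{L^\infty}\|\Delta_j v\|_{L^p}\les \|\na f\|_{L^\infty}\|\Delta_j v\|_{L^p}$. For the remainder piece, $\Delta_k R(v,\na f)=\sum_{j\geq k-N_0}\Delta_k\f(\Delta_j v\cd\widetilde\Delta_j\na f\g)$ since $\Delta_j v\cd\widetilde\Delta_j\na f$ is spectrally supported in a ball of radius $\les 2^j$; bounding $\|\widetilde\Delta_j\na f\|_{L^\infty}\les\|\na f\|_{L^\infty}$ then produces a geometric tail $\sum_{j\geq k}2^{(k-j)s}$, and the companion term $R(v,\na\Delta_k f)$ only picks up $|j-k|\leq 2$ contributions and is handled identically.

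The main obstacle, and the only place the hypothesis $s>0$ enters essentially, is precisely that geometric summation in the remainder piece: without strict positivity of $s$ the tail diverges. A secondary bookkeeping point is the careful redistribution of one derivative so that the norm on $v$ comes out as the $B^{s-1}_{p,\infty}$ norm of $\na v$ rather than the $B^s_{p,\infty}$ norm of $v$ (which would be morally the same but not literally what the lemma asks for). The low-frequency endpoint $k=-1$ is handled in exactly the same way, with the cutoff $\vartheta(D)$ replacing $\varphi(2^{-k}D)$.
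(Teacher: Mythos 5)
The paper offers no proof of this lemma --- it is quoted from \cite{BCD} (Lemma 2.100 there) --- so your proposal can only be measured against the standard argument, which is indeed the Bony--decomposition proof you outline: the skeleton (paraproduct commutator via a first--order Taylor expansion of the kernel of $\Delta_k$ plus Bernstein; the remaining pieces via putting $\nabla f$ in $L^\infty$ and summing a geometric tail, with $s>0$ entering exactly where you say) is the right one. Two points need repair, one cosmetic and one substantive. The cosmetic one: $T_{\nabla\Delta_k f}\cdot v=\sum_j S_{j-1}\nabla\Delta_kf\cdot\Delta_j v$ is \emph{not} localized to $|j-k|\leq 4$; the spectral constraint is only one--sided ($j\gtrsim k$), so this term is a tail sum exactly like the remainder and must be closed with the same geometric series $\sum_{j\geq k}2^{(k-j)s}$. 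That is harmless for $s>0$, but the justification you give for that piece, as written, is wrong.

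The substantive gap is the block $j=-1$ of $v$. The ``reshuffle'' $2^{js}\|\Delta_jv\|_{L^p}\approx2^{j(s-1)}\|\Delta_j\nabla v\|_{L^p}$ is a consequence of Bernstein's inequality on an annulus and fails at $j=-1$: nothing on the right--hand side of the lemma controls $\|\Delta_{-1}v\|_{L^p}$ (consider $v(x)=M\phi(x/M)$ with $\phi$ Schwartz and spectrally supported near the origin, for which $\|\Delta_{-1}v\|_{L^p}/\|\nabla v\|_{B^{s-1}_{p,\infty}}\approx M\to\infty$ while $\|\nabla v\|_{L^\infty}$ stays bounded). The block $\Delta_{-1}v$ genuinely occurs in $R(v,\nabla f)$ and in $R(v,\nabla\Delta_kf)$, paired with $\widetilde\Delta_{-1}\nabla f$ and contributing to the finitely many $k\lesssim 1$; estimating those two pieces separately, as you propose, therefore produces the uncontrollable quantity $\|\Delta_{-1}v\|_{L^p}\|\nabla f\|_{L^\infty}$. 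The fix is to keep the commutator structure on the low frequencies of $v$: split $v=\Delta_{-1}v+\tilde v$ at the outset, treat $[\Delta_k,\Delta_{-1}v]\cdot\nabla f=\sum_{|j-k|\leq N_0}[\Delta_k,\Delta_{-1}v]\cdot\Delta_j\nabla f$ by the same Taylor/Bernstein argument as your paraproduct commutator term (which yields $\|\nabla v\|_{L^\infty}\|f\|_{B^s_{p,\infty}}$), and run the Bony decomposition only on $\tilde v$, all of whose blocks have $j\geq 0$ so that the reshuffle is legitimate. With these two corrections your argument closes.
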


Finally, we recall the regularity estimate for the transport equations.
\begin{lemma}[\cite{BCD}]\label{zzgj}
Let $1\leq p,r\leq \infty$. Assume that
\begin{align*}
\sigma> -d \min\f(\frac{1}{p}, \frac{1}{p'}\g) \quad \mathrm{or}\quad \sigma> -1-d \min\f(\frac{1}{p}, \frac{1}{p'}\g)\quad \mathrm{if} \quad \mathrm{div\,} v=0.
\end{align*}
There exists a constant $C$, depending only on $d,p,r,\sigma$, such that for any smooth solution $f$ of the following transport equations
\begin{align*}
\begin{cases}
\pa_t f+v\cdot \nabla f=g\in L_{\rm{loc }}^1\left(\mathbb{R}^{+}; B_{p, r}^\sigma(\mathbb{R}^d)\right), \\
f(0,x)=f_0(x)\in B^\sigma_{p,r}(\mathbb{R}^d),
\end{cases}
\end{align*}
we have for $t\geq 0$
\begin{align}\label{ES2}
\|f\|_{L^\infty_tB^{\sigma}_{p,r}(\mathbb{R}^d)}\leq Ce^{CV_{p}(v,t)}\f(\|f_0\|_{B^\sigma_{p,r}(\mathbb{R}^d)}+\int^t_0\|g(\tau)\|_{B^\sigma_{p,r}(\mathbb{R}^d)}\dd \tau\g),
\end{align}
with
\begin{align*}
V_{p}(v,t)=
\begin{cases}
\int_0^t \|\nabla v(s)\|_{B^{\frac{d}{p}}_{p,\infty}(\mathbb{R}^d)\cap L^\infty(\mathbb{R}^d)}\dd s,\quad \mathrm{if} \quad \sigma<1+\frac{d}{p},\\
\int_0^t \|\nabla v(s)\|_{B^{\sigma-1}_{p,r}(\mathbb{R}^d)}\dd s, \quad \quad \mathrm{if} \quad \sigma>1+\frac{d}{p}\ \mathrm{or}\ \{\sigma=1+\frac{d}{p} \mbox{ and } r=1\}.
\end{cases}
\end{align*}
If $f=v$, then for all $\sigma>0$ ($\sigma>-1$, if $\mathrm{div\,} v=0$), the estimate \eqref{ES2} holds with
$V_{p}(t)=\int_0^t \|\nabla v(s)\|_{L^\infty(\mathbb{R}^d)}\dd s.$
\end{lemma}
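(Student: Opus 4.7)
The plan is to localize the transport equation in frequency and run an $L^p$-energy estimate on each dyadic piece. Applying $\Delta_k$ to $\pa_t f+v\cdot\na f=g$ produces
\begin{align*}
\pa_t \Delta_k f+v\cdot\na \Delta_k f=\Delta_k g+R_k,\qquad R_k:=[v,\Delta_k]\cdot\na f,
\end{align*}
so that the commutator $R_k$ absorbs all the interaction between the low-frequency transport and the high-frequency localization. Multiplying by $|\Delta_k f|^{p-2}\Delta_k f$ (with the usual convex regularization if $p=\infty$) and integrating in $x$ yields, after an integration by parts on the transport term,
\begin{align*}
\frac{1}{p}\frac{\dd}{\dd t}\|\Delta_k f\|_{L^p}^p
=\frac{1}{p}\int \div\,v\,|\Delta_k f|^p\,\dd x+\int (\Delta_k g+R_k)\,|\Delta_k f|^{p-2}\Delta_k f\,\dd x,
\end{align*}
which after dividing by $\|\Delta_k f\|_{L^p}^{p-1}$ gives the scalar estimate $\tfrac{\dd}{\dd t}\|\Delta_k f\|_{L^p}\le \|\Delta_k g\|_{L^p}+\|R_k\|_{L^p}+\tfrac{1}{p}\|\div\,v\|_{L^\infty}\|\Delta_k f\|_{L^p}$. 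Under the hypothesis $\div\, v=0$ the last term vanishes, which is exactly what allows one to drop a derivative in the indicial range and gain the extra $-1$ in the lower bound on $\sigma$.

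Next, I multiply by $2^{k\sigma}$ and take an $\ell^r$-norm in $k\ge -1$. The key input is the commutator estimate of Lemma \ref{jhz} (and its analogues for negative or non-integer $\sigma$, which are standard in Bahouri–Chemin–Danchin), yielding
\begin{align*}
\bigl\|2^{k\sigma}\|R_k\|_{L^p}\bigr\|_{\ell^r}\le C\,\|\na v\|_{\mathcal X}\,\|f\|_{B^\sigma_{p,r}},
\end{align*}
where the space $\mathcal X$ depends on the location of $\sigma$ relative to the Lipschitz embedding threshold $1+d/p$. For $\sigma<1+d/p$ the commutator splits into paraproduct and remainder pieces whose control requires both $\|\na v\|_{L^\infty}$ (from the high–high remainder using Bernstein) and $\|\na v\|_{B^{d/p}_{p,\infty}}$ (to absorb the low-frequency parts that are not dominated by the Lipschitz norm of $v$ alone). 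For $\sigma>1+d/p$ or the critical case $\{\sigma=1+d/p,\,r=1\}$, the embedding $B^{\sigma-1}_{p,r}\hookrightarrow L^\infty$ makes $\|\na v\|_{B^{\sigma-1}_{p,r}}$ the natural and sharper controlling quantity. These two cases are precisely the two branches of $V_p(v,t)$ in the statement.

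Once the $\ell^r$-summed inequality
\begin{align*}
\frac{\dd}{\dd t}\|f\|_{B^\sigma_{p,r}}\le \|g\|_{B^\sigma_{p,r}}+C\,\|\na v\|_{\mathcal X}\,\|f\|_{B^\sigma_{p,r}}
\end{align*}
is in hand, the conclusion follows from Grönwall's inequality applied on $[0,t]$, producing the factor $e^{CV_p(v,t)}$ in front of $\|f_0\|_{B^\sigma_{p,r}}+\int_0^t\|g(\tau)\|_{B^\sigma_{p,r}}\dd\tau$. Finally, in the special case $f=v$, one observes that for $\sigma>0$ (or $\sigma>-1$ when $\div\, v=0$) the commutator term can be bounded solely by $\|\na v\|_{L^\infty}\|v\|_{B^\sigma_{p,r}}$, because both factors now refer to the same function and the problematic low-frequency contribution in the commutator reduces to a Lipschitz bound on the transporting velocity. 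The hard part will be handling the commutator in the endpoint/low-regularity regimes $\sigma\le 0$ with a divergence-free constraint, where the naive paraproduct decomposition loses a derivative and one has to exploit the cancellation $\int \div\,v\,|\Delta_k f|^p=0$ together with a careful treatment of the low-frequency block $\Delta_{-1}$, which does not enjoy a Bernstein-type spectral localization.
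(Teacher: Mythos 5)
The paper does not prove this lemma --- it is quoted directly from Bahouri--Chemin--Danchin --- and your outline is precisely the standard argument given there: frequency localization, an $L^p$ energy estimate on each dyadic block with the commutator $[v,\Delta_k]\cdot\na f$ as source term, the refined commutator estimates producing the two branches of $V_p(v,t)$ (and the $\|\na v\|_{L^\infty}$-only bound when $f=v$), and Gr\"onwall. Your sketch also correctly flags where the genuine technical work lies (the commutator bounds in the low-regularity, divergence-free regime and the non-spectrally-localized block $\Delta_{-1}$), so it matches the cited proof in both structure and substance.
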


\section{Non-Continuous Dependence of 1-D Transport Equation}\label{sec3}
In order to expound the main ideas without clouding it by technicalities, the purpose of this section is treat the 1-D transport type equation. Particularly, we prove the non-continuous dependence of $B^s_{p,\infty}(\R)$ solution for the 1-D transport type equation:

\begin{equation}\label{TR}
\begin{cases}
u_t+u\pa_xu=F(u), \qquad &(t,x)\in \R^+\times\R,\\
u(t=0,x)=u_0(x), \qquad &x\in \R.
\end{cases}
\end{equation}
We can establish the following
\begin{theorem}\label{th3}
Let $u_0\in B^s_{p,\infty}(\mathbb{R})$ with $s>1+1/p$ and $p\in[1,\infty]$. Assume that $F(u)$ satisfies that for any $t\geq0$
\begin{itemize}
  \item Uniform bounded in $B^s_{p,\infty}(\R)$:\;
  $\|F(u)\|_{B^s_{p,\infty}(\R)}\leq C\|u\|_{B^s_{p,\infty}(\R)};$
  \item Lipchitz continuous in $B^{s-1}_{p,\infty}(\R)$:\;
  $\|F(u)-F(v)\|_{B^{s-1}_{p,\infty}(\R)}\leq C\|u-v\|_{B^{s-1}_{p,\infty}(\R)}.$
\end{itemize}
Then there exists a time $T>0$ such that

{\bf (1)\; Existence and Uniqueness:} the transport type equation \eqref{TR} has a unique solution $u(t,x)=\mathbf{S}_{t}(u_0)\in\mathcal{C}_w\left(0, T ; B_{p, \infty}^s(\R)\right) \cap C^{0,1}\left([0, T] ; B_{p, \infty}^{s-1}(\R)\right)$;

{\bf (2)\; Non-Continuous Dependence:} the data-to-solution map of \eqref{TR} obtained  in {\bf (1)}
is not continuous from any bounded subset in $B^s_{p,\infty}$ into $L^\infty_T(B^s_{p,\infty}(\R))$. More precisely, there exists two sequences of solutions $\mathbf{S}_t(u^n_0)$ and $\mathbf{S}_t(u_0)$ such that
\bbal
\lim_{n\rightarrow \infty}\|u^n_0-u_0\|_{B^s_{p,\infty}(\R)}= 0,
\end{align*}
but
\bbal
\liminf_{t_n\to 0}\|\mathbf{S}_{t_n}(u^n_0)-\mathbf{S}_{t_n}(u_0)\|_{B^s_{p,\infty}(\R)}\geq \eta>0.
\end{align*}

\end{theorem}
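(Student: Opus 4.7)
I would build $u$ by the standard Friedrichs-type iteration: set $u^{(0)}\equiv u_0$ and define $u^{(k+1)}$ as the solution of the linear transport equation $\pa_tu^{(k+1)}+u^{(k)}\pa_xu^{(k+1)}=F(u^{(k)})$ with datum $u_0$. By Lemma \ref{zzgj} at level $\sigma=s$ together with the uniform $B^s_{p,\infty}$-bound on $F$, one gets $\|u^{(k)}\|_{L^\infty_TB^s_{p,\infty}}\leq C\|u_0\|_{B^s_{p,\infty}}$ on a time interval $[0,T]$ with $T\sim\|u_0\|_{B^s_{p,\infty}}^{-1}$. Applying the same lemma one notch lower together with the $B^{s-1}_{p,\infty}$-Lipschitz property of $F$ shows $\{u^{(k)}\}$ is Cauchy in $C([0,T];B^{s-1}_{p,\infty})$, so its limit is the desired unique solution; weak time-continuity in $B^s_{p,\infty}$ and Lipschitz continuity in $B^{s-1}_{p,\infty}$ follow by standard Fatou/interpolation arguments.

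\textbf{Part (2): construction and decomposition.} Pick $\phi\in C_c^\infty(\R)$ with $\phi\equiv 1$ on $[-1,1]$ and $\phi(0)=1$, and set $\ep_n:=1/n$ together with
\[
u_0(x):=\phi(x)\Big(1+\sum_{k\geq K_0}2^{-ks}\cos(2^kx)\Big),\qquad u_0^n(x):=u_0(x)+\ep_n\phi(x).
\]
Standard lacunary arguments give $\|u_0\|_{B^s_{p,\infty}}+\sup_n\|u_0^n\|_{B^s_{p,\infty}}\leq C$ and $\|u_0^n-u_0\|_{B^s_{p,\infty}}\lesssim\ep_n\to 0$; since $\widehat\phi$ is compactly supported, for $n\geq K_0$ one has $\Delta_nu_0^n=\Delta_nu_0=2^{-ns}\phi(x)\cos(2^nx)$. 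Writing $u:=\mathbf{S}_t(u_0)$, $u^n:=\mathbf{S}_t(u_0^n)$, and following the toy-model roadmap, I would introduce for each of the two solutions three successive approximations: $u^{\rm ap,1}$ solving $\pa_tv+u_0\pa_xv=F(u_0)$ with datum $u_0$; $u_n^{\rm ap,2}$ solving $\pa_tv+u_0\pa_xv=0$ with datum $\Delta_nu_0$; and the explicit $u_n^{\rm ap,3}(t,x):=(\Delta_nu_0)(x-tu_0(x))$, with $n$-counterparts built from $u_0^n$. This yields a block-level decomposition
\[
\Delta_n(u^n-u)=E_1+E_2+E_3+E_{\rm main},
\]
where $E_1,E_2,E_3$ collect the three-stage errors (differences of $u-u^{\rm ap,1}$, $\Delta_nu^{\rm ap,1}-u_n^{\rm ap,2}$, $u_n^{\rm ap,2}-u_n^{\rm ap,3}$ between the two solutions) and $E_{\rm main}=u_n^{n,\rm ap,3}-u_n^{\rm ap,3}$, to be evaluated at $t_n:=c/(\ep_n2^n)=cn/2^n\to 0$ for a small fixed $c>0$.

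\textbf{Error control, main lower bound, and obstacle.} The three error groups contribute $o(1)$ to $2^{ns}\|\Delta_n(u^n-u)(t_n)\|_{L^p}$ as follows: for $E_1$, the forcing in the equation for $u-u^{\rm ap,1}$ is $-(u-u_0)\pa_xu^{\rm ap,1}+F(u)-F(u_0)$, both of size $O(s)$ in $B^{s-1}_{p,\infty}$ by Lemmas \ref{lp}--\ref{zzgj} and the Lipschitz hypothesis on $F$, which yields $\|u-u^{\rm ap,1}\|_{L^\infty_TB^{s-1}_{p,\infty}}\lesssim T^2$ and hence $2^{ns}\|\Delta_nE_1(t_n)\|_{L^p}\lesssim 2^nt_n^2\lesssim n^2/2^n\to 0$; for $E_2$, the forcing is the commutator $[\Delta_n,u_0]\pa_xu^{\rm ap,1}$, which by Lemma \ref{jhz} is of size $O(2^{-ns})$ in $L^p$, giving $2^{ns}\|E_2(t_n)\|_{L^p}\lesssim t_n\to 0$; for $E_3$, using $\eta_{u_0}^{-1}(t,x)=x-tu_0(x)+O(t^2)$ and $\|(\Delta_nu_0)'\|_{L^\infty}\sim 2^{n(1-s)}$ yields $\lesssim 2^nt_n^2\to 0$. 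Since $\Delta_nu_0^n=\Delta_nu_0=2^{-ns}\phi(\cdot)\cos(2^n\cdot)$, the main term is $(\Delta_nu_0)(y_n^\star)-(\Delta_nu_0)(y_n)$ with $y_n=x-t_nu_0$, $y_n^\star=x-t_nu_0^n$, so $y_n^\star-y_n=-t_n\ep_n\phi(x)$ and $2^n(y_n^\star-y_n)=-c\phi(x)$. The prosthaphaeresis identity $\cos A-\cos B=-2\sin\frac{A+B}{2}\sin\frac{A-B}{2}$ then gives
\[
E_{\rm main}\approx 2\cdot 2^{-ns}\,\phi(y_n)\sin\!\f(2^n\tfrac{y_n+y_n^\star}{2}\g)\sin\!\f(\tfrac{c\phi(x)}{2}\g),
\]
a function Fourier-localized near $2^n$ on which $\Delta_n$ acts as near-identity; its $L^p$-norm restricted to $|x|\leq 1$ (where $\phi\equiv 1$) is bounded below by a positive constant times $2^{-ns}$, giving $2^{ns}\|\Delta_n(u^n-u)(t_n)\|_{L^p}\geq\eta_0>0$. \emph{The main obstacle} is the rigorous bookkeeping of $E_1$--$E_3$ within the $B^s_{p,\infty}$ framework—particularly handling the product $(u-u_0)\pa_xu^{\rm ap,1}$, which does not close in $B^s_{p,\infty}$ but does so one derivative lower—and preserving the Fourier localization of $(\Delta_nu_0)\circ(x-tu_0(x))$ near $2^n$ up to tails dominated by the main term; the scale $\ep_n=1/n$ must be slow enough that $\ep_n^2 2^n\to\infty$ (so $E_3$ is negligible) yet $\ep_n\to 0$.
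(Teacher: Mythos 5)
Your strategy is essentially the paper's: the same four-stage approximation chain (linearize around $u_0$, drop the forcing, localize in frequency, replace the flow by $x-tu_0(x)$), the same telescoping that keeps $\Delta_n$ off the composed main term, the same matching $t_n\sim n2^{-n}$ against a perturbation of size $1/n$ so that the phase shift $2^nt_n\ep_n$ is of order one, and the same error budget $2^nt_n^2+t_n+1/n\to0$. However, three concrete points need repair. First, your $\phi$ cannot exist: a nonzero $C_c^\infty$ function never has compactly supported Fourier transform, and your argument uses both properties (compact support of $\widehat\phi$ for $\Delta_n u_0^n=\Delta_nu_0$ and $\Delta_n(\ep_n\phi)=0$; the identity $\phi\equiv1$ on $[-1,1]$ for the lower bound). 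You must keep $\widehat\phi$ compactly supported and give up $\phi\equiv1$; the lower bound is then taken on a small interval $[0,2\pi 2^{-N_0}]$ where $\phi\ge\phi(0)/2$, which costs a harmless factor $2^{-N_0/p}$ (this is exactly the paper's device). Second, with carrier frequency $2^k$ the Fourier support of $\phi(x)\cos(2^kx)$ sits at $|\xi|=2^k\pm\tfrac12$, where neither $\varphi(2^{-k}\cdot)\equiv1$ nor $\varphi(2^{-(k-1)}\cdot)\equiv0$; hence $\Delta_nu_0\neq2^{-ns}\phi(x)\cos(2^nx)$ as you claim. The frequency must be placed inside the plateau of $\varphi$, e.g. $\tfrac{11}{8}2^k$.

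Third, and most substantively, the key quantitative step is asserted rather than proved: the lower bound for $\|\phi(y_n)\sin(2^n\tfrac{y_n+y_n^\ast}{2})\sin(\tfrac{c\phi}{2})\|_{L^p}$. Since $2^nt_n=cn\to\infty$, the phase correction $2^nt_nu_0(x)$ is \emph{not} small, so one cannot treat $\sin(2^n(x-t_nu_0(x)))$ as a harmless perturbation of $\sin(2^nx)$; a genuine argument is required. The paper's Lemma 3.2 supplies it: rescale, split $[0,2\pi\lambda_n]$ into $\lambda_n$ periods, and observe that on each single period the oscillation of $x\mapsto\lambda_nt_nu_0(x/\lambda_n+\cdot)$ is only $O(t_n\|u_0'\|_{L^\infty})$, so each period contributes at least $\|\cos\|_{L^p([0,2\pi])}-Ct_n$. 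Without this (or an equivalent equidistribution argument) the main lower bound is unsupported. A minor further remark: your comment that $\Delta_n$ "acts as near-identity" on $(\Delta_nu_0)(x-tu_0(x))$ is both unnecessary and false without work — the composition is not compactly supported in frequency — but your own telescoping already avoids applying $\Delta_n$ to the main term, so nothing is needed there.
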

\subsection{Construction of Initial Data}
Before constructing the initial data for the 1-D transport type equation on the real line, we need to introduce smooth, radial cut-off functions to localize the frequency region. We define an even, real-valued and non-negative function $\widehat{\phi}\in \mathcal{C}^\infty_0(\mathbb{R})$ with values in $[0,1]$ which satisfies
\bal\label{fi}
\widehat{\phi}(\xi)=
\bca
1, \quad \mathrm{if} \ |\xi|\leq \frac{1}{4},\\
0, \quad \mathrm{if} \ |\xi|\geq \frac{1}{2}.
\eca
\end{align}
It is easy to check that $\phi(x)=(2\pi)^{-1}\mathcal{F}^{-1}(\widehat{\phi}(\xi))$ is a real-valued and continuous function on $\R$. Also, it holds $\|\phi\|_{L^\infty}=\phi(0)>0$ and $\|\phi\|_{L^p}\approx 1$ with $1\leq p<\infty$. Then there exists some integer number $N_0$ which can be chosen large enough such that
$\phi(x)\geq \phi(0)/{2}$ for any $x\in [0,2\pi2^{-N_0}].$ With this, we can define the real-valued initial data $u_0$.
\begin{definition}\label{def} Let $s> 1$. We define initial data $u_0$  as follows
\bbal
u_0(x):=
\sum^{\infty}_{j=3}2^{-js}\tilde{\phi}(x)\cos\f(\frac{11}{8}2^jx\g), \quad  x\in \mathbb{R},
\end{align*}
where we denote  $\mathrm{\gamma}(s):=2^{2s}(2^s-1)>1$  and
\bbal
\tilde{\phi}(x):=
\mathrm{\gamma}(s)\frac{\phi(x)}{\phi(0)}.
\end{align*}
\end{definition}

\subsection{Technical Lemmas}
In this subsection we establish some technical lemmas which will be used in the sequel.
\begin{lemma}\label{js1} Let $u_0$ be given by Definition \ref{def}. Assume that $(s,p)$ satisfies $s>1+\frac1p$ with $1\leq p\leq \infty$.
Then there exists some sufficiently large integer $N_0$ and constant $C_s>0$ independent of $N_0$  such that for arbitrarily  large integer $n\gg N_0$
\begin{align}
&\|u_0\|_{L^\infty(\mathbb{R})}\leq u_0(0)=1,\quad\|u'_0\|_{L^\infty(\mathbb{R})}\leq C_s,\label{zy1}\\
&\Delta_nu_0=2^{-ns}\tilde{\phi}(x)\cos\f(\frac{11}{8}2^nx\g),\quad\Delta_n\tilde{\phi}=0, \label{zy2}\\
&u_0\in W^{1,\infty}(\mathbb{R})\cap B^s_{p,\infty}(\mathbb{R}) \quad \text{with}\quad \|u_0\|_{W^{1,\infty}(\mathbb{R})\cap B^s_{p,\infty}(\mathbb{R})}\leq C_s.\label{zy3}
\end{align}
\end{lemma}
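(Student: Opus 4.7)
The plan is to verify \eqref{zy1}--\eqref{zy3} by direct computation, leveraging two key features of the construction: the geometric-series normalization $\gamma(s)=2^{2s}(2^s-1)$ designed so that $u_0(0)=1$, and the compact frequency support $\mathrm{supp}\,\widehat{\tilde{\phi}}\subset[-1/2,1/2]$ inherited from \eqref{fi}, which localizes each summand of $u_0$ to a narrow band in Fourier space.

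For \eqref{zy1}, I first evaluate $u_0(0)=\tilde{\phi}(0)\sum_{j\geq 3}2^{-js}=\gamma(s)\cdot 2^{-2s}/(2^s-1)=1$. Since $\|\phi\|_{L^\infty}=\phi(0)$ forces $|\tilde{\phi}(x)|\leq\tilde{\phi}(0)$, the triangle inequality combined with $|\cos|\leq 1$ yields $\|u_0\|_{L^\infty}\leq u_0(0)=1$. Term-by-term differentiation produces two series bounded respectively by $\|\tilde{\phi}'\|_{L^\infty}\sum 2^{-js}$ and $(11/8)\|\tilde{\phi}\|_{L^\infty}\sum 2^{j(1-s)}$, both convergent because $s>1+1/p\geq 1$, so $\|u_0'\|_{L^\infty}\leq C_s$.

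For \eqref{zy2}, the Fourier transform of the $k$-th summand is supported in $\{|\xi|\in[\tfrac{11}{8}2^k-\tfrac12,\tfrac{11}{8}2^k+\tfrac12]\}$. I choose $N_0$ so large that, for every $n\geq N_0$, after scaling by $2^{-n}$ the following three inclusions hold: (i) the $k=n$ support $[\tfrac{11}{8}-2^{-n-1},\tfrac{11}{8}+2^{-n-1}]$ lies inside the plateau $[\tfrac{4}{3},\tfrac{3}{2}]$ of $\varphi$ (requires $2^{-n-1}<1/24$); (ii) every $k\geq n+1$ support lies to the right of $\tfrac{8}{3}$, since $\tfrac{11}{4}-2^{-n-1}>\tfrac{8}{3}$ requires $2^{-n-1}<\tfrac{1}{12}$; (iii) every $k\leq n-1$ support lies to the left of $\tfrac{3}{4}$, since $\tfrac{11}{16}+2^{-n-1}<\tfrac{3}{4}$ requires $2^{-n-1}<\tfrac{1}{16}$. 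Together these three thresholds ensure that only the $k=n$ summand contributes to $\Delta_n u_0$, and it is multiplied by $\varphi(\xi/2^n)=1$; this is exactly \eqref{zy2}. The identity $\Delta_n\tilde{\phi}=0$ for every $n\geq 1$ follows immediately from $\mathrm{supp}\,\varphi(\cdot/2^n)\cap[-1/2,1/2]=\emptyset$.

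For \eqref{zy3}, the $W^{1,\infty}$ bound is precisely \eqref{zy1}. To estimate $\|u_0\|_{B^s_{p,\infty}}$, I split at $n=N_0$: for $n\geq N_0$, \eqref{zy2} gives $2^{ns}\|\Delta_n u_0\|_{L^p}=\|\tilde{\phi}\cos(\tfrac{11}{8}2^n x)\|_{L^p}\leq\|\tilde{\phi}\|_{L^p}\leq C_s$ uniformly in $n$; for the finitely many $-1\leq n<N_0$, uniform $L^p$-boundedness of $\Delta_n$ combined with $\|u_0\|_{L^p}\leq\|\tilde{\phi}\|_{L^p}\sum_{j\geq 3}2^{-js}\leq C_s$ yields $2^{ns}\|\Delta_n u_0\|_{L^p}\leq C_s 2^{N_0 s}$, which is a constant once $N_0$ is fixed by the geometric thresholds of the previous paragraph. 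The only point requiring real care is the frequency bookkeeping above: the whole design hinges on $11/8$ lying strictly inside the plateau $[4/3,3/2]$ of $\varphi$ and being well-separated from its dyadic double $11/4$ and half $11/16$, which is precisely what allows each Littlewood--Paley block $\Delta_n$ to isolate exactly one summand of $u_0$ with unit coefficient.
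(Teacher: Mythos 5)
Your proposal is correct and follows essentially the same route as the paper: compute the Fourier support of each modulated block $\tilde{\phi}(x)\cos(\tfrac{11}{8}2^jx)$, observe it sits in the plateau $\{\tfrac43 2^j\le|\xi|\le\tfrac32 2^j\}$ where $\varphi(2^{-j}\cdot)\equiv1$ and is disjoint from the other dyadic annuli, so that $\Delta_n u_0$ isolates exactly the $n$-th summand, and then conclude \eqref{zy1} and \eqref{zy3} by elementary series estimates. Your write-up is in fact more explicit than the paper's (which declares \eqref{zy1} obvious and states the support inclusion without the numerical thresholds); the only cosmetic caveat is that the low-frequency bound should be anchored at the absolute index where the frequency separation first holds (a fixed small integer), rather than at the $N_0$ of the lemma, so that $C_s$ is manifestly independent of $N_0$.
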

\begin{proof}
\eqref{zy1} is obvious. Set $f_j(x):=\tilde{\phi}(x)\cos\f(\frac{11}{8}2^jx\g)$.
Straightforward computations yield
\bbal
\widehat{f_j}(\xi)=\frac{\mathrm{\gamma}(s)}{\phi(0)}
\left[\widehat{\phi}\left(\xi-\frac{11}{8}2^j\right)+\widehat{\phi}\left(\xi+\frac{11}{8}2^j\right)\right],
\end{align*}
which implies
\bal\label{s}
\mathrm{supp} \ \widehat{f_j}&\subset \left\{\xi\in\R: \ \frac{11}{8}2^j-\fr12\leq |\xi|\leq \frac{11}{8}2^j+\fr12\right\}
\subset \left\{\xi\in\R: \ \frac{4}{3}2^j\leq |\xi|\leq \frac{3}{2}2^j\right\}.
\end{align}
Recalling that $\mathrm{supp}\ \widehat{\phi}\subset [0,\fr12]$ and $\varphi\equiv 1$ for $\frac43\leq |\xi|\leq \frac32$, we obtain that  $\Delta_n\phi=0$ for large $n$ and for any  $k\in\{-1,0\}\cup \mathbb{N}^+$
\begin{align*}{\Delta_kf_j=}
\begin{cases}
f_k, &\mathrm{if} \; j=k,\\
0, &\mathrm{if} \; j\neq k.\end{cases}
\end{align*}
Thus \eqref{zy2} holds.
Then from \eqref{s}, we deduce that
\bbal
\|u_0\|_{B^s_{p,\infty}(\mathbb{R})}&=\sup_{k\geq-1}2^{ks}\|\Delta_ku_0\|_{L^{p}(\mathbb{R})}
= \sup_{k\geq0}\f\|\tilde{\phi}(x)\cos\f(\frac{11}{8}2^kx\g)\g\|_{L^p(\mathbb{R})}
\leq C_{s}\|\tilde{\phi}\|_{L^p(\mathbb{R})},
\end{align*}
which completes the proof of Lemma \ref{js1}.
\end{proof}

\begin{lemma}\label{tl2}
Let $p\in[1,\infty)$ and $g(x)\in W^{1,\infty}(\R)$. Denote $\lambda_n:= \frac{11}{8}2^n$ and $t_n:=\frac{8}{11}\pi n2^{-n}$. Then, we have for $1\ll n\in \mathbb{N}^+$ and $t\in[0,1]$
\bbal
&\f\|\phi(x-tg(x))\g\|_{L^p(\R)}\leq C,\\
&\f\|\cos\f(\lambda_n(x-t_ng(x))\g)\g\|_{L^p([0,2\pi])}\geq \frac1{4}.
\end{align*}
\end{lemma}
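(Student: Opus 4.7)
The plan is to handle the two bounds separately. For the first one, I would exploit that $\widehat{\phi}\in C_c^\infty(\R)$ implies $\phi\in\mathcal{S}(\R)$, so $|\phi(y)|\leq C_N(1+|y|)^{-N}$ for any $N$. Combined with $\|u_0\|_{L^\infty}\leq 1$ from Lemma~\ref{js1} and $t\in[0,1]$, the triangle inequality gives $|x-tu_0(x)|\geq |x|-1\geq|x|/2$ whenever $|x|\geq 2$. Splitting $\R$ into $\{|x|\leq 2\}$ and $\{|x|>2\}$ and choosing $N$ with $Np>1$ then yields a bound on $\|\phi(x-tu_0(x))\|_{L^p(\R)}$ by a constant independent of $t\in[0,1]$; this part is routine and I expect no serious difficulty.

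For the second bound, introduce
\[
g(x):=\lambda_n(x-t_n u_0(x))=\lambda_n x-\pi n\,u_0(x),
\]
using the identity $\lambda_n t_n=\pi n$. Since $\|u_0'\|_{L^\infty}\leq C_s$ by Lemma~\ref{js1} and $t_n\sim n\,2^{-n}\to 0$, for $n$ large we have $t_n\|u_0'\|_{L^\infty}\leq 1/2$, and hence $g'(x)=\lambda_n(1-t_n u_0'(x))\in[\lambda_n/2,\,3\lambda_n/2]$, so $g$ is a strictly increasing $C^1$-diffeomorphism of $[0,2\pi]$ onto $[g(0),g(2\pi)]$. Using $|u_0(2\pi)-u_0(0)|\leq 2$ and $n\ll\lambda_n$, one computes $g(2\pi)-g(0)\geq\pi\lambda_n$ for large $n$. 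The change of variables $y=g(x)$, together with the fact that $|\cos y|^p$ has period $\pi$, then gives
\[
\int_0^{2\pi}|\cos(g(x))|^p\,dx\;\geq\;\frac{2}{3\lambda_n}\int_{g(0)}^{g(2\pi)}|\cos y|^p\,dy\;\geq\;\frac{2(\lambda_n-1)}{3\lambda_n}\,c_p\;\geq\;\frac{c_p}{3},
\]
where $c_p:=\int_0^\pi|\cos y|^p\,dy$, since an interval of length at least $\pi\lambda_n$ contains at least $\lambda_n-1$ full periods of $|\cos|^p$.

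It remains to show $(c_p/3)^{1/p}\geq 1/4$ uniformly in $p\in[1,\infty)$. Using $\cos y\geq 1/2$ on $[0,\pi/3]$ together with symmetry about $\pi/2$, one has $c_p\geq 2\int_0^{\pi/3}2^{-p}\,dy=\tfrac{2\pi}{3}\cdot 2^{-p}$, hence $c_p/3\geq \tfrac{2\pi}{9}\cdot 2^{-p}\geq 4^{-p}$, where the last step uses $2\pi/9\geq 2^{-p}$ for every $p\geq 1$. Taking $p$-th roots yields the claimed $1/4$ lower bound. The main technical subtlety is to track the constants so that this bound remains exactly $1/4$ uniformly in $p\in[1,\infty)$; the change of variables and the period-counting are the essential ingredients and are robust, while the lower bound $\|u_0\|_{L^\infty}\leq 1$ and the smallness of $t_n$ are precisely what make the diffeomorphism argument go through.
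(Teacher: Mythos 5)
Your proof is correct. For the first inequality you argue exactly as the paper does: Schwartz decay of $\phi$, the bound $\|u_0\|_{L^\infty}\leq 1$, and a split of $\R$ into $\{|x|\leq 2\}$ and $\{|x|>2\}$. For the second inequality you take a genuinely different route. The paper rescales $x\mapsto x/\lambda_n$, chops $[0,2\pi\lambda_n]$ into $\lambda_n$ blocks of length $2\pi$, and on each block freezes the argument of $u_0$ at the left endpoint; the mean value theorem together with $\lambda_n t_n\cdot\frac{x}{\lambda_n}\|u_0'\|_{L^\infty}\leq Ct_n$ shows the frozen cosine differs from the true one by $O(t_n)$, so each block contributes essentially $\|\cos\|_{L^p([0,2\pi])}^p$ and one concludes from $\frac12\|\cos\|_{L^p([0,2\pi])}\geq\frac14$. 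You instead observe that $g(x)=\lambda_n(x-t_nu_0(x))$ is a diffeomorphism with $g'\in[\lambda_n/2,3\lambda_n/2]$ once $t_n\|u_0'\|_{L^\infty}\leq 1/2$, change variables, and count full periods of $|\cos|^p$ in $[g(0),g(2\pi)]$; your uniform-in-$p$ bookkeeping ($c_p\geq\frac{2\pi}{3}2^{-p}$, hence $(c_p/3)^{1/p}\geq 1/4$) is accurate. Both arguments rest on the same two facts — boundedness of $u_0'$ and the smallness of $t_n$ — but yours trades the paper's perturbative freezing estimate for a Jacobian bound, which is arguably cleaner and makes transparent that the only role of $t_n\to 0$ is to keep the phase monotone with derivative comparable to $\lambda_n$; the paper's blockwise argument, on the other hand, is the one that generalizes verbatim to the higher-dimensional setting of Lemma \ref{tl3}, where the phase $x_1-t_nf(x)$ is perturbed in $x_1$ only and a one-variable change of variables would require slightly more care with the $\tilde{x}$-dependence.
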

\begin{proof} Since $\phi$ is a Schwartz function, we have for $100\leq M\in \mathbb{N}^+$
\bbal
|\phi(x)|\leq C(1+|x|)^{-M},
\end{align*}
which means that
\bbal
\int_{\R}\f|\phi(x-tg(x))\g|^p\dd x&\leq C\int_{\R}(1+|x-tg(x)|)^{-Mp}\dd x
\\&\leq C2^{Mp}\int_{|x|\geq 2\|g\|_{L^\infty}+2}|x|^{-Mp}\dd x+C\int_{|x|\leq 2\|g\|_{L^\infty}+2}1\dd x\leq C,
\end{align*}
where we have used the simple fact that $|x-tg(x)|\geq |x|-t|g(x)|\geq |x|-\|g\|_{L^\infty}$.

By some simple variable changes, we obtain
\bbal
\int^{2\pi}_0\f|\cos\f(\lambda_n(x-t_ng(x))\g)\g|^p\dd x
&=\frac{1}{\lambda_n}\int^{2\pi \lambda_n}_0\f|\cos\f(x-\lambda_nt_n g\f(\frac{x}{\lambda_n}\g)\g)\g|^p\dd x
\\&=\frac{1}{\lambda_n}\sum^{\lambda_n}_{j=1}\int^{2\pi j }_{2\pi(j-1)}\f|\cos\f(x-\lambda_nt_n g\f(\frac{x}{\lambda_n}\g)\g)\g|^p\dd x
\\&=\frac{1}{\lambda_n}\sum^{\lambda_n}_{j=1}\int^{2\pi }_{0}\f|\cos\f(x-\lambda_nt_n g\f(\frac{x}{\lambda_n}+\frac{2\pi (j-1)}{\lambda_n}\g)\g)\g|^p\dd x\\&\geq \frac{1}{\lambda_n}\sum^{\lambda_n}_{j=1} (I-J)^p,
\end{align*}
where
\bbal
&I:=\f\|\cos\f(x-\lambda_nt_n g\f(\frac{2\pi(j-1)}{\lambda_n}\g)\g)\g\|_{L^p([0,2\pi])}=\f\|\cos x\g\|_{L^p([0,2\pi])},\\
&J:=\f\|\cos\f(x-\lambda_nt_n g\f(\frac{x}{\lambda_n}+\frac{2\pi(j-1)}{\lambda_n}\g)\g)-\cos\f(x-\lambda_nt_n g\f(\frac{2\pi(j-1)}{\lambda_n}\g)\g)\g\|_{L^p([0,2\pi])}.
\end{align*}
Notice that for $x\in[0,2\pi]$
\bbal
&\f|\cos\f(x-\lambda_nt_n g\f(\frac{x}{\lambda_n}+\frac{2\pi(j-1)}{\lambda_n}\g)\g)-\cos\f(x-\lambda_nt_n g\f(\frac{2\pi(j-1)}{\lambda_n}\g)\g)\g|\\
&\leq \lambda_nt_n\f|g\f(\frac{x}{\lambda_n}+\frac{2\pi(j-1)}{\lambda_n}\g)-g\f(\frac{2\pi(j-1)}{\lambda_n}\g)\g|
\\&\leq t_nx\f|g'\f(\xi_{j,n,x}\g)\g|,\qquad \xi_{j,n,x}\in[0,2\pi]
\\&\leq Ct_n,
\end{align*}
which gives that
$J\leq  Ct_n$, and thus for large $n$ we have
\bbal
I-J\geq \frac{1}{2}\f\|\cos x\g\|_{L^p([0,2\pi])}\geq \frac{1}{4}.
\end{align*}
This completes the proof of Lemma \ref{tl2}.
\end{proof}
As an application, we have the higher dimensional version which will be used to the Euler equations.
\begin{lemma}\label{tl3}
Let $p\in[1,\infty)$ and $g(x)\in W^{1,\infty}(\R^d)$ with $d\geq2$. Then we have for some $N_0\in \mathbb{N}^+$ and $n\gg1$
\bal\label{hh}
\f\|\cos\f(\lambda_n(x_1-t_ng(x))\g)\g\|_{L^p([0,2\pi2^{-N_0}]^d)}\geq 2^{-\frac{d}{p}N_0-2}.
\end{align}
\end{lemma}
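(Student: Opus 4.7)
I would reduce Lemma~\ref{tl3} to the one-dimensional argument of Lemma~\ref{tl2} by Fubini / slicing in the transverse variables. For each fixed $x' = (x_2,\ldots,x_d)\in [0, 2\pi 2^{-N_0}]^{d-1}$, the slice $x_1\mapsto f(x_1,x')$ is Lipschitz with constant bounded by $\|\nabla f\|_{L^\infty(\mathbb{R}^d)}$, and this global Lipschitz property is the only feature of the inner function actually used in the proof of Lemma~\ref{tl2}. The slice-wise estimate therefore proceeds in parallel with the one-dimensional case, the only difference being that the integration interval is now $[0, 2\pi 2^{-N_0}]$ rather than $[0, 2\pi]$.

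For each fixed $x'$, I would change variables $y = \lambda_n x_1$ and partition $[0, 2\pi 2^{-N_0}\lambda_n]$ into $M_n := \lfloor 2^{-N_0}\lambda_n\rfloor$ consecutive periods of length $2\pi$, discarding a harmless remainder of length less than $2\pi$. On the $j$-th period I freeze $f$ at the left endpoint via the Lipschitz bound
\[
\bigl|\lambda_n t_n f((y+2\pi(j{-}1))/\lambda_n,x') - \lambda_n t_n f(2\pi(j{-}1)/\lambda_n,x')\bigr| \leq 2\pi t_n \|\nabla f\|_{L^\infty(\mathbb{R}^d)},
\]
which tends to $0$ uniformly in $x'$ and $j$ (recall $\lambda_n t_n = \pi n$ and $t_n = O(n 2^{-n})$). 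Combined with $|\cos a - \cos b|\leq|a-b|$, the triangle inequality in $L^p([0,2\pi])$, and the $2\pi$-periodicity of $\cos$, this gives, for $n$ large,
\[
\bigl\|\cos(y - \lambda_n t_n f(\cdot,x'))\bigr\|_{L^p([0,2\pi])} \geq \tfrac{1}{2}\|\cos y\|_{L^p([0,2\pi])} =: \tfrac{c_p}{2}.
\]

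Summing the $M_n$ contributions and using $M_n/\lambda_n\geq 2^{-N_0}/2$ for $n\gg N_0$ yields, for every $x'$,
\[
\int_0^{2\pi 2^{-N_0}}|\cos(\lambda_n(x_1 - t_n f(x_1,x')))|^p\,\dd x_1 \;\geq\; \tfrac{1}{2}\cdot 2^{-N_0}\cdot (c_p/2)^p,
\]
and Fubini against $x'\in [0, 2\pi 2^{-N_0}]^{d-1}$ multiplies by $(2\pi 2^{-N_0})^{d-1}$. Taking $p$-th roots produces the lower bound $(2\pi)^{(d-1)/p}(c_p/2)(1/2)^{1/p}\cdot 2^{-dN_0/p}$, and the numerical prefactor exceeds $2^{-2}$ because $c_p\geq 1/2$ for all $p\in[1,\infty)$ (since $|\cos y|\geq 1/2$ on a subset of $[0,2\pi]$ of measure $\geq 2\pi/3$) and $2\pi>1$. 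The only mildly delicate step I anticipate is verifying that the Lipschitz error bound above is uniform in the transverse variable $x'$, which follows immediately from the global finiteness of $\|\nabla f\|_{L^\infty(\mathbb{R}^d)}$; once this observation is in place, the rest is a routine adaptation of the one-dimensional calculation of Lemma~\ref{tl2}.
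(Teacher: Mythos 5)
Your proposal is correct and follows essentially the same route as the paper: reduce to the one-dimensional periodization argument of Lemma~\ref{tl2} slice-by-slice in the transverse variables and then integrate in $\tilde{x}$ (the paper does the reduction by rescaling $x_1\mapsto 2^{-N_0}x_1$ so that the slab becomes $[0,2\pi]$ and citing Lemma~\ref{tl2} verbatim, whereas you partition $[0,2\pi 2^{-N_0}\lambda_n]$ into $\lfloor 2^{-N_0}\lambda_n\rfloor$ periods directly --- the same computation in different coordinates). One immaterial quibble: to get the final prefactor above $2^{-2}$ you need $\bigl((2\pi)^{d-1}/2\bigr)^{1/p}\geq 1$, i.e. $(2\pi)^{d-1}\geq 2$ (true since $d\geq 2$), not merely $2\pi>1$, to absorb the factor $(1/2)^{1/p}$ coming from $M_n/\lambda_n\geq 2^{-N_0}/2$.
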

\begin{proof}
Setting $(\tilde{\lambda}_n,\tilde{t}_n)=(2^{-N_0}\lambda_n,2^{N_0}t_n)$ and $\tilde{g}(x_1)=g(2^{-N_0}x_1,\tilde{x}))$ with $\tilde{x}=(x_2,\ldots,x_d)$. As in Lemma \ref{tl2}, one has
\bbal
\f\|\cos\f(\lambda_n(x_1-t_ng(x))\g)\g\|^p_{L_{x_1}^p([0,2\pi2^{-N_0}])}&= 2^{-N_0} \int^{2\pi}_{0}\f|\cos\f(\tilde{\lambda}_n(x_1-\tilde{t}_n\tilde{g}(x_1)\g)\g|^p\dd x_1
\geq \frac{1}{4^p}2^{-N_0},
\end{align*}
and  thus
\bbal
\text{Left of}\; \eqref{hh}&=\f\|\f\|\cos\f(\lambda_n(x_1-t_ng(x))\g)\g\|_{L^p_{x_1}([0,2\pi2^{-N_0}])}\g\|_{L_{\tilde{x}}^p([0,2\pi2^{-N_0}]^{d-1})}\geq \frac{1}{4}2^{-\frac{d}{p}N_0}.
\end{align*}
This completes the proof of Lemma \ref{tl3}.
\end{proof}

\subsection{Constructions of Approximation Solutions}

We begin with a series of approximation lemmas.
\begin{proposition}\label{3pr1} Assume that $u_0\in B^{s}_{p,\infty}(\mathbb{R})$ with $s>1+1/p,1\leq p\leq \infty$.
Suppose that $u$ is the solution of the original equation \eqref{TR} and $u^{\rm{ap},1}$ solves the following linear system
\bal\label{3h1}
\bca
\pa_t u^{\rm{ap},1}+u_0\pa_xu^{\rm{ap},1}=F(u_0), \\
u^{\rm{ap},1}(0,x)=u_0(x).
\eca
\end{align}
For small $t>0$, we have for some positive constant $C$ independent of $t$
\begin{align*}
\|u-u^{\rm{ap},1}\|_{B_{p,\infty}^{s-1}(\mathbb{R})}\leq Ct^2.
\end{align*}
\end{proposition}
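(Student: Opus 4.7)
The plan is to set $w := u - u^{\rm{ap},1}$, rewrite the difference of the two equations as a linear transport equation driven by $u_0$ with a source term, and then apply the transport estimate of Lemma \ref{zzgj} at regularity $s-1$.

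Subtracting \eqref{3h1} from \eqref{TR} and using the algebraic identity $u\pa_x u - u_0\pa_x u^{\rm{ap},1} = u_0\pa_x w + (u-u_0)\pa_x u$, I obtain
\begin{align*}
\pa_t w + u_0\pa_x w = -(u-u_0)\pa_x u + F(u) - F(u_0), \qquad w(0,x) = 0.
\end{align*}
Since $s-1 > 1/p$, Lemma \ref{zzgj} (with transport velocity $u_0$, independent of time, so $V_p(u_0,t) \lesssim t\|\pa_x u_0\|_{L^\infty} \lesssim t$) yields
\begin{align*}
\|w(t)\|_{B^{s-1}_{p,\infty}} \lesssim \int_0^t \Bigl(\|(u-u_0)\pa_x u\|_{B^{s-1}_{p,\infty}} + \|F(u)-F(u_0)\|_{B^{s-1}_{p,\infty}}\Bigr)\,\dd\tau.
\end{align*}

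The two source terms are estimated in the same way, and the whole proof reduces to showing the linear-in-time bound
\begin{align*}
\|u(\tau) - u_0\|_{B^{s-1}_{p,\infty}} \lesssim \tau \qquad \text{for small } \tau.
\end{align*}
To establish this, I would use $u(\tau)-u_0 = \int_0^\tau [F(u) - u\pa_x u]\,\dd\sigma$, then control the integrand in $B^{s-1}_{p,\infty}$ via the hypotheses on $F$ (giving $\|F(u)\|_{B^{s-1}_{p,\infty}} \lesssim \|u\|_{B^s_{p,\infty}}$) and the product law of Lemma \ref{lp} (giving $\|u\pa_x u\|_{B^{s-1}_{p,\infty}} \lesssim \|u\|_{B^{s-1}_{p,\infty}}\|u\|_{B^s_{p,\infty}}$), combined with the uniform bound $\|u\|_{L^\infty_T B^s_{p,\infty}} \lesssim \|u_0\|_{B^s_{p,\infty}}$ coming from the existence theory of part (1) of Theorem \ref{th3}.

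Feeding this $\mathcal O(\tau)$ estimate back into the two source terms — for the first through Lemma \ref{lp}: $\|(u-u_0)\pa_x u\|_{B^{s-1}_{p,\infty}} \lesssim \|u-u_0\|_{B^{s-1}_{p,\infty}}\|u\|_{B^s_{p,\infty}} \lesssim \tau$, and for the second directly through the Lipschitz hypothesis on $F$: $\|F(u)-F(u_0)\|_{B^{s-1}_{p,\infty}} \lesssim \|u-u_0\|_{B^{s-1}_{p,\infty}} \lesssim \tau$ — the outer time integration produces the desired $t^2$ gain. The main obstacle (and the only subtle point) is precisely this bookkeeping in the degree of smoothness: one must work at the loss level $s-1$ throughout so that both the product law and the Lipschitz hypothesis on $F$ apply without absorbing any $\|u\|_{B^s_{p,\infty}}$ factor that would spoil the linear-in-$\tau$ control. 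Under the standing hypothesis $s>1+1/p$, everything is consistent and the bound $\|w(t)\|_{B^{s-1}_{p,\infty}} \leq Ct^2$ follows.
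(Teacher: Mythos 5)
Your proposal is correct and follows essentially the same route as the paper: first the Newton--Leibniz bound $\|u(\tau)-u_0\|_{B^{s-1}_{p,\infty}}\lesssim\tau$, then a transport estimate for $w=u-u^{\rm{ap},1}$ whose source terms are controlled by $\|u-u_0\|_{B^{s-1}_{p,\infty}}$ via the product law and the Lipschitz hypothesis on $F$, giving $t^2$ after integration. The only (immaterial) difference is the splitting of the nonlinear term: you transport $w$ by $u_0$ with source $-(u-u_0)\pa_x u$, whereas the paper transports $w$ by $u$ with source $-(u-u_0)\pa_x u^{\rm{ap},1}$; both work identically.
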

\begin{proof} By the Newton-Leibniz formula and Lemma \ref{lp}, one has
\bbal
\|u(t)-u_0\|_{B^{s-1}_{p,\infty}(\mathbb{R})}\leq \int_0^t\|u\pa_xu-F(u)\|_{B^{s-1}_{p,\infty}(\mathbb{R})}\dd\tau\leq Ct.
\end{align*}
We denote $w:=u-u^{\rm{ap},1}$. From \eqref{3h1}, one has
\bal\label{3h2}
\bca
\pa_tw+u\pa_xw=-(u-u_0)\pa_xu^{\rm{ap},1}+F(u)-F(u_0),\\
w(t=0)=0.
\eca
\end{align}
Notice that
\begin{align*}
\|F(u)-F(u_0)\|_{B_{p, \infty}^{s-1}} & \leq C\|u-u_0\|_{B_{p, \infty}^{s-1}},
\end{align*}
and applying Lemma \ref{zzgj} to \eqref{3h2}, we have
\bbal
\|w\|_{B_{p,\infty}^{s-1}(\mathbb{R})}\leq&~\int_0^t\|(u-u_0)\pa_xu^{\rm{ap},1}\|_{B_{p,\infty}^{s-1}(\mathbb{R})}+\|F(u)-F(u_0)\|_{B_{p,\infty}^{s-1}(\mathbb{R})}\dd\tau\\
\leq&~ C\int_0^t\|u-u_0\|_{B_{p,\infty}^{s-1}(\mathbb{R})}\dd\tau\leq Ct^2.
\end{align*}
Then we complete the proof of Proposition \ref{3pr1}.
\end{proof}
\begin{proposition}\label{3pr2} Assume that $u_0\in B^{s}_{p,\infty}(\mathbb{R})$ with $s>1+1/p,1\leq p\leq \infty$.
Suppose that $u^{\rm{ap},1}$ is the solution of \eqref{3h1} and $u^{\rm{ap},2}$ solves the following linear system
\bal\label{h1}
\bca
\pa_t u^{\rm{ap},2}+u_0\pa_xu^{\rm{ap},2}=0, \\
u^{\rm{ap},2}(0,x)=u_0(x).
\eca
\end{align}
For small $t>0$, we have for some positive constants $C$ independent of $t$
\begin{align*}
\f\|u^{\rm{ap},1}-u^{\rm{ap},2}\g\|_{B^{s}_{p,\infty}(\mathbb{R})}\leq Ct.
\end{align*}
\end{proposition}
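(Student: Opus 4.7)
The plan is to reduce to the transport regularity estimate of Lemma \ref{zzgj}. I would set $w := u^{\rm{ap},1}-u^{\rm{ap},2}$ and subtract the two systems: since both problems share the initial datum $u_0$ and the transport velocity $u_0$, the difference $w$ satisfies the linear transport equation
\begin{align*}
\begin{cases}
\pa_t w + u_0\,\pa_x w = F(u_0),\\
w(0,x) = 0.
\end{cases}
\end{align*}
A crucial feature is that the transport velocity $u_0$ is independent of $t$.

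Next I would apply Lemma \ref{zzgj} with $\sigma = s$. Since $s>1+\frac{1}{p}$ is precisely the regime in which Lemma \ref{zzgj} permits the choice $V_p(u_0,t)=\int_0^t\|\nabla u_0\|_{B^{s-1}_{p,\infty}(\mathbb{R})}\,\dd\tau$, and since $u_0$ does not depend on time, this reduces to $V_p(u_0,t)=t\|\nabla u_0\|_{B^{s-1}_{p,\infty}(\mathbb{R})}\leq Ct\|u_0\|_{B^s_{p,\infty}(\mathbb{R})}$. Consequently, Lemma \ref{zzgj} yields
\begin{align*}
\|w\|_{L^\infty_t B^s_{p,\infty}(\mathbb{R})} \leq C e^{Ct\|u_0\|_{B^s_{p,\infty}(\mathbb{R})}} \int_0^t \|F(u_0)\|_{B^s_{p,\infty}(\mathbb{R})}\,\dd\tau.
\end{align*}

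Finally, invoking the first structural hypothesis on $F$, namely $\|F(u_0)\|_{B^s_{p,\infty}(\mathbb{R})} \leq C\|u_0\|_{B^s_{p,\infty}(\mathbb{R})}$ (which is a time-independent bound since $u_0$ is fixed), the integrand is a constant in $\tau$, so the integral produces exactly one factor of $t$. Taking $t$ small enough that $e^{Ct\|u_0\|_{B^s_{p,\infty}}} \leq 2$ absorbs the exponential into the constant, giving the desired $\|u^{\rm{ap},1}-u^{\rm{ap},2}\|_{B^s_{p,\infty}(\mathbb{R})}\leq Ct$. I do not anticipate any serious obstacle: the argument is a one-shot application of the linear transport estimate, and the only subtle point is confirming that the index condition $s>1+\frac{1}{p}$ allows $V_p$ to be controlled by $\|u_0\|_{B^s_{p,\infty}}$ alone rather than by a higher-regularity norm of the transport field.
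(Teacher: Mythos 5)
Your proof is correct and follows essentially the same route as the paper: both subtract the two systems to obtain $\pa_t w+u_0\pa_xw=F(u_0)$ with $w(0,x)=0$, apply Lemma \ref{zzgj} with $\sigma=s$, and use the bound $\|F(u_0)\|_{B^s_{p,\infty}}\leq C\|u_0\|_{B^s_{p,\infty}}$ to get the factor of $t$. Your extra care in tracking the exponential weight $e^{CV_p(u_0,t)}$ and absorbing it for small $t$ is a detail the paper suppresses into the constant, but it is the same argument.
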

\begin{proof}  We denote $w:=u^{\rm{ap},1}-u^{\rm{ap},2}$. From \eqref{3h1} and  \eqref{h1}, one has
\bal\label{h2}
\bca
\pa_t w+u_0\pa_x w=F(u_0), \\
w(0,x)=0.
\eca
\end{align}
Applying Lemma \ref{zzgj} to \eqref{h2}, we have
\bbal
\|w\|_{B_{p,\infty}^{s}(\mathbb{R})}\leq&~ \int_0^t\|F(u_0)\|_{B_{p,\infty}^{s}(\mathbb{R})}\dd\tau
\leq Ct,
\end{align*}
which completes the proof of Proposition \ref{3pr2}.
\end{proof}
\begin{proposition}\label{3pr3} Assume that $u_0\in B^{s}_{p,\infty}(\mathbb{R})$ with $s>1+1/p,1\leq p\leq \infty$.
Suppose that $u^{\rm{ap},2}$ is the solution of \eqref{h1} and $u^{\rm{ap},3}_n$ satisfies the following linear system
\bal\label{h3}
\bca
\pa_t u^{\rm{ap},3}_n+u_0\pa_xu^{\rm{ap},3}_n=0, \\
u^{\rm{ap},3}_n(0,x)=\Delta_nu_0(x).
\eca
\end{align}
For small $t>0$, we have for some positive constant $C$ independent of $t$ and $n$
\begin{align*}
&2^{ns}\f\|\Delta_nu^{\rm{ap},2}-u^{\rm{ap},3}_n\g\|_{L^p(\mathbb{R})}\leq Ct.
\end{align*}
\end{proposition}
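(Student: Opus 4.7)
The plan is to derive a transport equation for the error $w_n := \Delta_n u^{\rm ap,2} - u^{\rm ap,3}_n$ with zero initial data and a forcing term that is a standard frequency-localization commutator, and then to estimate it in $L^p$ using Lemma \ref{jhz} and Lemma \ref{zzgj}.

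First I would apply $\Delta_n$ to equation \eqref{h1} and introduce the commutator $[\Delta_n, u_0]\partial_x u^{\rm ap,2} := \Delta_n(u_0 \partial_x u^{\rm ap,2}) - u_0 \partial_x \Delta_n u^{\rm ap,2}$ to obtain
\begin{align*}
\pa_t (\Delta_n u^{\rm ap,2}) + u_0 \pa_x (\Delta_n u^{\rm ap,2}) = -[\Delta_n, u_0]\pa_x u^{\rm ap,2}.
\end{align*}
Subtracting \eqref{h3} and using $\Delta_n u^{\rm ap,2}(0,x) = \Delta_n u_0(x) = u^{\rm ap,3}_n(0,x)$, we find
\begin{align*}
\pa_t w_n + u_0 \pa_x w_n = -[\Delta_n, u_0]\pa_x u^{\rm ap,2}, \qquad w_n|_{t=0}=0.
\end{align*}

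Next I would run a standard $L^p$ energy estimate (integrating by parts in the transport term, absorbing $\|\pa_x u_0\|_{L^\infty}$, which is finite by Lemma \ref{js1}), or equivalently apply Lemma \ref{zzgj} with $\sigma=0$, to get
\begin{align*}
\|w_n(t)\|_{L^p} \leq C\int_0^t \|[\Delta_n, u_0]\pa_x u^{\rm ap,2}(\tau)\|_{L^p}\,\dd\tau.
\end{align*}
Multiplying by $2^{ns}$ and invoking the commutator estimate in Lemma \ref{jhz} with $v=u_0$ and $f=u^{\rm ap,2}$, I get
\begin{align*}
2^{ns}\|[\Delta_n, u_0]\pa_x u^{\rm ap,2}\|_{L^p} \leq C\bigl(\|\pa_x u_0\|_{L^\infty}\|u^{\rm ap,2}\|_{B^s_{p,\infty}} + \|\pa_x u^{\rm ap,2}\|_{L^\infty}\|\pa_x u_0\|_{B^{s-1}_{p,\infty}}\bigr).
\end{align*}

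Finally, both factors on the right must be bounded uniformly in $n$ and $t$ on a short time interval. The bounds on $u_0$ are provided by Lemma \ref{js1}. For $u^{\rm ap,2}$, I would apply the transport estimate Lemma \ref{zzgj} to \eqref{h1} with $\sigma=s$ to obtain $\|u^{\rm ap,2}\|_{L^\infty_t B^s_{p,\infty}} \leq C \|u_0\|_{B^s_{p,\infty}}$ on a fixed time interval, and then use the embedding $B^{s-1}_{p,\infty}\hookrightarrow L^\infty$ (valid since $s>1+1/p$) to control $\|\pa_x u^{\rm ap,2}\|_{L^\infty}$. Putting these pieces together yields $2^{ns}\|w_n(t)\|_{L^p}\leq Ct$, which is the claim. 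The only non-routine step is the application of the Bony-type commutator estimate, which is precisely what forces the correct $2^{ns}$ weight; all other steps are standard transport-equation manipulations.
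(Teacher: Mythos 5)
Your proposal is correct and follows essentially the same route as the paper: introduce the commutator $[\Delta_n,u_0]\pa_x u^{\rm ap,2}$ so that $w_n$ solves a transport equation with zero data, perform the $L^p$ estimate, control the forcing via the commutator estimate of Lemma \ref{jhz} together with uniform bounds on $u_0$ and $u^{\rm ap,2}$, and close with Gronwall. Your write-up is in fact slightly more explicit than the paper's about where the uniform bound on $\|u^{\rm ap,2}\|_{B^s_{p,\infty}}$ comes from.
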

\begin{proof}  We denote $w:=\Delta_nu^{\rm{ap},2}-u^{\rm{ap},3}_n$. From \eqref{h1} and \eqref{h3}, one has
\bal\label{h4}
\bca
\pa_tw+u_0\pa_xw=-[\Delta_n,u_0]\pa_xu^{\rm{ap},2},\\
w(0,x)=0.
\eca
\end{align}
Taking the inner product of Eq. \eqref{h4} with $|w|^{p-2}w$ and using Lemma \ref{jhz}, we obtain
\bbal
2^{ns}\|w\|_{L^p(\mathbb{R})}\leq&~ \int_0^t2^{ns}\|\pa_xu_0\|_{L^{\infty}(\mathbb{R})}\|w\|_{L^p(\mathbb{R})}\dd\tau+\int_0^t2^{ns}\f\|[\Delta_n,u_0]\pa_xu^{\rm{ap},2}\g\|_{L^{p}(\mathbb{R})}\dd\tau\\
\leq&~C\int_0^t2^{ns}\|w\|_{L^p(\mathbb{R})}\dd\tau+Ct.
\end{align*}
By Gronwall's inequality, we complete the proof of Proposition \ref{3pr3}.
\end{proof}
\begin{proposition}\label{3pr4} Assume that $u_0\in B^{s}_{p,\infty}(\mathbb{R})$ with $s>1+1/p$ and $1\leq p\leq \infty$. Denoting $u^{\rm{ap},4}_n=(\Delta_nu_0)(x-tu_0(x))$. Then $u^{\rm{ap},4}_n$ satisfies the following system
\bal\label{h5}
\bca
\pa_tu^{\rm{ap},4}_n+u_0\pa_xu^{\rm{ap},4}_n=-tu_0\pa_xu_0(\pa_x\Delta_nu_0)(x-tu_0(x)), \\
u^{\rm{ap},4}(0,x)=\Delta_nu_0(x).
\eca
\end{align}
For small $t>0$, we have for some positive constant $C$ independent of $t$ and $n$
\begin{align*}
&\f\|u^{\rm{ap},4}_n-u^{\rm{ap},3}_n\g\|_{L^p(\mathbb{R})}\leq C\int_0^t\tau\|(\pa_x\Delta_nu_0)(x-\tau u_0(x))\|_{L^{p}(\mathbb{R})}\dd\tau.
\end{align*}
In particular, assume that $u_0$ is given by Definition \ref{def}, then
\begin{align*}
&2^{ns}\f\|u^{\rm{ap},4}_n-u^{\rm{ap},3}_n\g\|_{L^p(\mathbb{R})}\leq Ct^22^n.
\end{align*}
\end{proposition}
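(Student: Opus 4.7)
The plan has three parts, matching the two assertions of the proposition.

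First, I would verify the transport equation \eqref{h5} by direct differentiation of the explicit formula $u^{\rm{ap},4}_n(t,x)=(\Delta_n u_0)(x-tu_0(x))$. Writing $y(t,x):=x-tu_0(x)$, the chain rule gives
\begin{align*}
\pa_t u^{\rm{ap},4}_n &= -u_0(x)\,(\pa_x\Delta_n u_0)(y),\\
\pa_x u^{\rm{ap},4}_n &= \bigl(1-t\pa_x u_0(x)\bigr)(\pa_x\Delta_n u_0)(y).
\end{align*}
Multiplying the second identity by $u_0$ and adding yields exactly $\pa_t u^{\rm{ap},4}_n+u_0\pa_x u^{\rm{ap},4}_n=-tu_0\pa_x u_0\,(\pa_x\Delta_n u_0)(y)$, which is \eqref{h5}. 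The initial condition is immediate from $y(0,x)=x$.

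Next, let $w:=u^{\rm{ap},4}_n-u^{\rm{ap},3}_n$. Subtracting \eqref{h3} from \eqref{h5} gives the transport equation
\begin{equation*}
\pa_t w+u_0\pa_x w=-tu_0\pa_x u_0\,(\pa_x\Delta_n u_0)(x-tu_0(x)),\qquad w(0,x)=0.
\end{equation*}
Since $u_0\in W^{1,\infty}$ by \eqref{zy3}, I would run a standard $L^p$ energy estimate: pair with $|w|^{p-2}w$ (or just take the $L^\infty$ estimate along characteristics if $p=\infty$) and integrate by parts in the transport term to bound $\int u_0\pa_x(|w|^p)\,\dd x$ by $\|\pa_x u_0\|_{L^\infty}\|w\|_{L^p}^p$. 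This produces
\begin{equation*}
\tfrac{\dd}{\dd t}\|w\|_{L^p}\le C\|w\|_{L^p}+C\tau\,\bigl\|(\pa_x\Delta_n u_0)(\,\cdot\,-\tau u_0(\,\cdot\,))\bigr\|_{L^p},
\end{equation*}
and Gronwall (noting $w(0)=0$ and $t$ small) gives the first claimed estimate.

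For the second assertion, I would plug in the specific $u_0$ from Definition \ref{def}. By \eqref{zy2},
\begin{equation*}
\pa_x\Delta_n u_0(x)=2^{-ns}\tilde\phi{}'(x)\cos\bigl(\tfrac{11}{8}2^n x\bigr)-2^{-ns}\tfrac{11}{8}2^n\tilde\phi(x)\sin\bigl(\tfrac{11}{8}2^n x\bigr),
\end{equation*}
so $|\pa_x\Delta_n u_0(x)|\le C2^{-ns}2^n(|\tilde\phi(x)|+|\tilde\phi{}'(x)|)$ since the high-frequency factor dominates. The key point is that both $\tilde\phi$ and $\tilde\phi{}'$ are Schwartz (up to the constant $\gamma(s)/\phi(0)$), so the first bound of Lemma \ref{tl2} applies verbatim to give $\|\tilde\phi(\,\cdot\,-\tau u_0(\,\cdot\,))\|_{L^p}+\|\tilde\phi{}'(\,\cdot\,-\tau u_0(\,\cdot\,))\|_{L^p}\le C$ uniformly for $\tau\in[0,1]$; for $p=\infty$ this is trivial. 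Consequently
\begin{equation*}
\bigl\|(\pa_x\Delta_n u_0)(\,\cdot\,-\tau u_0(\,\cdot\,))\bigr\|_{L^p}\le C 2^{-ns}2^n,
\end{equation*}
and inserting into the integral bound from the previous step produces $\|w\|_{L^p}\le Ct^2 2^{-ns}2^n$, i.e.\ $2^{ns}\|w\|_{L^p}\le Ct^2 2^n$.

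The only mildly delicate point is handling the composition $\tilde\phi(\,\cdot\,-\tau u_0(\,\cdot\,))$ in $L^p$; this is precisely where Lemma \ref{tl2} is needed, and its use rests on the Schwartz decay of $\phi$ together with $\|u_0\|_{L^\infty}\le 1$ from \eqref{zy1}. Everything else is bookkeeping (chain rule, standard transport energy estimate, Gronwall).
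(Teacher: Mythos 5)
Your proposal is correct and follows essentially the same route as the paper: subtract the two transport equations, run the $L^p$ estimate by pairing with $|w|^{p-2}w$ (characteristics for $p=\infty$), and apply Gronwall. The paper omits the explicit verification of \eqref{h5} and the detailed treatment of the ``in particular'' bound, but your chain-rule computation and your use of the Schwartz decay of $\tilde\phi$, $\tilde\phi'$ together with Lemma \ref{tl2} to control the composition are exactly the intended way to fill in those steps.
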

\begin{proof}  We denote $w:=u^{\rm{ap},4}_n-\Delta_nu^{\rm{ap},3}$. From \eqref{h3} and \eqref{h5}, one has
\bal\label{h6}
\bca
\pa_tw+u_0\pa_x w=-tu_0\pa_xu_0(\pa_x\Delta_nu_0)(x-tu_0(x)),\\
w(0,x)=0.
\eca
\end{align}
Taking the inner product of Eq. \eqref{h6} with $|w|^{p-2}w$, we obtain
\bbal
\|w\|_{L^p(\mathbb{R})}\leq&~ \int_0^t\|\pa_xu_0\|_{L^{\infty}(\mathbb{R})}\|w\|_{L^p(\mathbb{R})}\dd\tau+\int_0^t\tau\|u_0\pa_xu_0(\pa_x\Delta_nu_0)(x-\tau u_0(x))\|_{L^{p}(\mathbb{R})}\dd\tau\\
\leq&~C\int_0^t\|w\|_{L^p(\mathbb{R})}\dd\tau+C\int_0^t\tau\|(\pa_x\Delta_nu_0)(x-\tau u_0(x))\|_{L^{p}(\mathbb{R})}\dd\tau.
\end{align*}
By Gronwall's inequality, we complete the proof of Proposition \ref{3pr4}.
\end{proof}

\subsection{Estimations of Approximate Solutions}

We define an even, real-valued and non-negative function $\widehat{\psi}\in \mathcal{C}^\infty_0(\mathbb{R})$ with values in $[0,1]$ which satisfies
\bbal
\widehat{\psi}(\xi)=
\bca
1, \quad \mathrm{if} \ \frac12\leq |\xi|\leq \frac{5}{8},\\
0, \quad \mathrm{if} \ |\xi|\geq \frac{3}{4} \ \mathrm{or} \ |\xi|\leq  \frac38.
\eca
\end{align*}
It is easy to check that $\psi(x)=(2\pi)^{-1}\mathcal{F}^{-1}(\widehat{\psi}(\xi))$ is a real-valued and continuous function on $\R$ and $\|\psi\|_{L^\infty}=\psi(0)>0$.
\begin{definition}\label{DEF} Let $u_0$ be given by Definition \ref{def} and define a perturbation sequence of initial data $u_0$
\bbal
u^n_0(x):=
u_0(x)+\frac1n\Phi(x),\quad \text{where} \quad \Phi(x):=
\frac{\psi(x)}{\psi(0)},\quad x\in \mathbb{R}.
\end{align*}
\end{definition}
Next, we need compute the difference between the approximate solution $u^{\rm{ap},4}_{n,1}$ of original solution and approximate solution $u^{\rm{ap},4}_{n,2}$ of perturbed solution.
\begin{proposition}\label{3pr5} Setting
\bbal
u^{\rm{ap},4}_{n,1}(t,x)=(\Delta_nu_0)(x-tu_0(x)) \quad\text{and}\quad u^{\rm{ap},4}_{n,2}(t,x)=(\Delta_nu^n_0)(x-tu^{n}_0(x)).
\end{align*}
For small $t>0$ and large $N_0$, we have for $n\gg N_0$
\begin{align*}
2^{ns}\f\|\f(u^{\rm{ap},4}_{n,1}-u_{n,2}^{\rm{ap},4}\g)(t,x)\g\|_{L^p(\mathbb{R})}\geq 2^{-\frac{N_0}p-3}.
\end{align*}
\end{proposition}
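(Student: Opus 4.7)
The plan is to collapse the pointwise difference $u^{\mathrm{ap},4}_{n,1}-u^{\mathrm{ap},4}_{n,2}$ to one explicit trigonometric identity and then extract the lower bound by applying the oscillation estimate of Lemma \ref{tl3}.

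First I would check that the perturbation $\tfrac{1}{n}\Phi(x)$ is annihilated by every high-frequency block $\Delta_n$. Indeed, $\widehat{\Phi}$ is supported in $\{\tfrac{3}{8}\le|\xi|\le\tfrac{3}{4}\}$; the symbol of $\Delta_n$ for $n\ge 1$ is supported in $\{|\xi|\ge\tfrac{3}{2}\}$, while for $n=0$ one has $\varphi(\xi)=\vartheta(\xi/2)-\vartheta(\xi)\equiv 0$ on $\{|\xi|\le\tfrac{3}{4}\}$. Consequently $\Delta_n\Phi=0$ for every $n\ge 0$, whence $\Delta_n u_0^n=\Delta_n u_0$. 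Using the explicit formula $\Delta_nu_0(x)=2^{-ns}\tilde\phi(x)\cos(\lambda_nx)$ from Lemma \ref{js1} (with $\lambda_n=\tfrac{11}{8}2^n$) and writing $A:=x-tu_0(x)$ and $B:=x-tu_0^n(x)=A-\tfrac{t}{n}\Phi(x)$, the difference reduces to
\begin{align*}
2^{ns}\bigl(u^{\mathrm{ap},4}_{n,1}-u^{\mathrm{ap},4}_{n,2}\bigr)=\tilde\phi(A)\bigl[\cos(\lambda_nA)-\cos(\lambda_nB)\bigr]+\bigl[\tilde\phi(A)-\tilde\phi(B)\bigr]\cos(\lambda_nB).
\end{align*}

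Next I would take $t=t_n=\tfrac{8}{11}\pi n\,2^{-n}$ as in Lemma \ref{tl2}, calibrated so that $\lambda_n(A-B)/2=\tfrac{\pi}{2}\Phi(x)$. The sum-to-product identity yields
\begin{align*}
\cos(\lambda_nA)-\cos(\lambda_nB)=-2\sin\bigl(\lambda_n(x-t_ng_n(x))\bigr)\sin\bigl(\tfrac{\pi}{2}\Phi(x)\bigr),
\end{align*}
where $g_n(x):=\tfrac12(u_0(x)+u_0^n(x))\in W^{1,\infty}(\R)$. I would restrict attention to $[0,2\pi 2^{-N_0}]$. By the construction of $\tilde\phi$, one has $\tilde\phi(x)\ge\gamma(s)/2$ on this interval, so $\tilde\phi(A)\ge\gamma(s)/4$ for $n$ large (since $|A-x|\le t_n\|u_0\|_{L^\infty}\to 0$). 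Enlarging $N_0$ if necessary, continuity of $\psi$ at $0$ together with $\|\psi\|_{L^\infty}=\psi(0)$ guarantees $\Phi(x)\ge 1/2$ on $[0,2\pi 2^{-N_0}]$, hence $\sin(\tfrac{\pi}{2}\Phi(x))\ge\sqrt 2/2$. The remainder is harmless, since
\begin{align*}
\bigl|\tilde\phi(A)-\tilde\phi(B)\bigr|\le\|\tilde\phi'\|_{L^\infty}\cdot\tfrac{t_n}{n}\|\Phi\|_{L^\infty}\le C\,2^{-n}.
\end{align*}

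Finally, the sine analog of Lemma \ref{tl3} (whose proof is identical because $\|\sin\|_{L^p([0,2\pi])}>0$) applied to $g_n\in W^{1,\infty}(\R)$ gives
\begin{align*}
\bigl\|\sin(\lambda_n(x-t_ng_n(x)))\bigr\|_{L^p([0,2\pi 2^{-N_0}])}\ge 2^{-N_0/p-2}.
\end{align*}
Combining,
\begin{align*}
2^{ns}\bigl\|u^{\mathrm{ap},4}_{n,1}-u^{\mathrm{ap},4}_{n,2}\bigr\|_{L^p(\R)}\ge 2\cdot\frac{\gamma(s)}{4}\cdot\frac{\sqrt 2}{2}\cdot 2^{-N_0/p-2}-C\,2^{-n}\cdot 2^{-N_0/p}\ge 2^{-N_0/p-3}
\end{align*}
for $n\gg N_0$, using $\gamma(s)\ge 4$ so that $\gamma(s)\sqrt 2/16\ge\sqrt 2/4>1/8=2^{-3}$. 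The main obstacle is really the frequency-localization step: it is essential that $\widehat\psi$ be supported strictly between $\tfrac{1}{4}$ and $\tfrac{3}{4}$ so that $\Delta_n\Phi\equiv 0$ for all $n\ge 0$, since this is what reduces the problem to comparing \emph{one} oscillation composed with two near-identity shifts; once this clean identity is available, the sum-to-product identity together with the calibration $\lambda_nt_n/n=\pi$ carries the rest of the argument almost automatically.
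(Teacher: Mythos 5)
Your argument is correct and rests on the same pillars as the paper's proof (the frequency localization $\Delta_n\Phi=0$ so that $\Delta_nu_0^n=\Delta_nu_0$, the calibration $\lambda_nt_n=n\pi$, restriction to $[0,2\pi 2^{-N_0}]$ where $\tilde\phi$ is bounded below, and the oscillation lemma), but the execution of the trigonometric lower bound is genuinely different. The paper first replaces $\tilde\phi(A)$ and $\tilde\phi(B)$ by $\tilde\phi(x)$ (errors $K_1,K_2=O(t_n)$), then replaces $\Phi(x)$ by $\Phi(0)=1$ (error $I_2\le C2^{-(1+1/p)N_0}$, which is why $N_0$ must be taken large at the end) and uses $\cos(\theta-\pi)-\cos\theta=-2\cos\theta$ together with Lemma \ref{tl2}. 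You instead keep $\tilde\phi(A)$, apply the sum-to-product identity exactly, bound $\sin(\tfrac{\pi}{2}\Phi(x))\ge \tfrac{\sqrt2}{2}$ pointwise (using only $\tfrac12\le\Phi\le 1$ near the origin), and invoke a sine version of the oscillation lemma for the averaged velocity $g_n=\tfrac12(u_0+u_0^n)$. This buys you a single remainder of size $O(2^{-n})$ instead of $O(t_n)+O(2^{-(1+1/p)N_0})$, and it decouples the choice of $N_0$ from the error estimates; the price is that you must justify the sine analogue of Lemma \ref{tl3} for a velocity $g_n$ that varies with $n$ — harmless, since $\|g_n\|_{W^{1,\infty}}$ is bounded uniformly in $n$, but worth stating.

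One omission: the proposition includes $p=\infty$ (the claimed bound is then $2^{-3}$), and your final step relies on the $L^p$ oscillation lemma, which is stated and proved only for $p\in[1,\infty)$. The paper treats $p=\infty$ separately by evaluating at $x=0$. Your framework handles it just as easily — at $x=0$ one has $g_n(0)=1+\tfrac{1}{2n}$, so $|\sin(\lambda_n(0-t_ng_n(0)))|=|\sin(n\pi+\tfrac{\pi}{2})|=1$ and the main term is at least $2\tilde\phi(-t_n)\ge \gamma(s)>1$ — but this case should be written out rather than left implicit.
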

\begin{proof}
Due to $\mathrm{supp}\ \widehat{\Phi}\subset [\fr38,\fr34]$, which implies that $\Delta_n\Phi=0$ for $n\gg N_0$, from Lemma \ref{js1}, one has
\bbal
\Delta_n
u_0=\Delta_n
u^n_0=2^{-ns}\tilde{\phi}(x)\cos\f(\frac{11}{8}2^nx\g), \qquad n\gg N_0.
\end{align*}
Thus we have
\bbal
2^{ns}u^{\rm{ap},4}_{n,2}(t,x)&=2^{ns}(\Delta_n u^n_0)\f(x-tu_0^n(x)\g)\\
&=\tilde{\phi}(x)\cos\f(\frac{11}{8}2^n(x-tu_0(x))-\frac{11}{8}2^n\frac{t}n\Phi(x)\g)+K_1(t),
\\ 2^{ns}u^{\rm{ap},4}_{n,1}(t,x)&=2^{ns}(\Delta_n u_0)(x-tu_0(x))\\
&=\tilde{\phi}(x)\cos\f(\frac{11}{8}2^n(x-tu_0(x))\g)+K_2(t),
\end{align*}
where
\bbal
&K_1(t):=\f[\tilde{\phi}\f(x-tu_0^n(x)\g)-\tilde{\phi}(x)\g]\cos\f(\frac{11}{8}2^n(x-tu^n_0(x))\g),\\
&K_2(t):=\f[\tilde{\phi}\f(x-tu_0(x)\g)-\tilde{\phi}(x)\g]\cos\f(\frac{11}{8}2^n(x-tu_0(x))\g).
\end{align*}
Letting $\frac{11}{8}2^{n}t_n=n\pi$, then one has
\bbal
&\quad 2^{ns}\f(u^{\rm{ap},4}_{n,2}(t_n)-u_{n,1}^{\rm{ap},4}(t_n)\g)
\\&=\tilde{\phi}(x)\f[\cos\f(\frac{11}{8}2^n(x-t_nu_0(x))-\pi\Phi(x)\g)-\cos\f(\frac{11}{8}2^n(x-t_nu_0(x))\g)\g]
+K_1(t_n)-K_2(t_n),
\end{align*}
which implies
\bal\label{H}
&\quad 2^{ns}\f\|u^{\rm{ap},4}_{n,2}(t_n)-u_{n,1}^{\rm{ap},4}(t_n)\g\|_{L^p(\R)}
\nonumber\\&\geq \f\|\tilde{\phi}(x)\f[\cos\f(\frac{11}{8}2^n(x-t_nu_0(x))-\pi\Phi(x)\g)
-\cos\f(\frac{11}{8}2^n(x-t_nu_0(x))\g)\g]\g\|_{L^p(\R)}\nonumber\\&\quad
-\f\|K_1\g\|_{L^p(\R)}-\f\|K_2\g\|_{L^p(\R)}.
\end{align}
Obviously,
\bbal
\f\|K_1(t_n)\g\|_{L^p(\R)}&\leq \|\tilde{\phi}\f(x-t_nu_0^n(x)\g)-\tilde{\phi}(x)\|_{L^p(\R)}
\leq C\|t_nu_0^n(x)\|_{L^p(\R)}\leq Ct_n,
\end{align*}
and similarly
\bbal
\f\|K_2(t_n)\g\|_{L^p(\R)}&\leq Ct_n.
\end{align*}

{\bf Case for $p=\infty$.}\; Taking $x=0$ and using the fact $u_0(0)=\Phi(0)=1$, one has for $n\gg 1$
\bbal
2^{ns}\f\|u^{\rm{ap},4}_{n,2}(t_n)-u_{n,1}^{\rm{ap},4}(t_n)\g\|_{L^\infty(\R)}&\geq  2\tilde{\phi}(0)\f|\cos\f(\frac{11}{8}2^n(-t_nu_0(0))\g)\g|-Ct_n
\\&\geq  2\mathbf{\gamma}(s)\f|\cos\f(\frac{11}{8}2^n(-t_nu_0(0))\g)\g|-Ct_n
\\&\geq \mathbf{\gamma}(s)\geq 1.
\end{align*}

{\bf Case for $1\leq p<\infty$.}\;
Noticing that there exists $N_0>0$ such that
\bbal
\forall x\in[0,2\pi2^{-N_0}], \qquad \varphi(x)\geq \frac12\varphi(0) \quad \Longleftrightarrow\quad \tilde{\phi}(x)\geq\fr12,
\end{align*}
and due to $\Phi(0)=1$, we have
\bbal
&\quad\f\|\tilde{\phi}(x)\f[\cos\f(\frac{11}{8}2^n(x-t_nu_0(x))-\pi\Phi(x)\g)
-\cos\f(\frac{11}{8}2^n(x-t_nu_0(x))\g)\g]\g\|_{L^p([0,2\pi2^{-N_0}])}\nonumber\\
&\geq \frac1{2}\f\|\cos\f(\frac{11}{8}2^n(x-t_nu_0(x))-\pi\Phi(x)\g)
-\cos\f(\frac{11}{8}2^n(x-t_nu_0(x))\g)\g\|_{L^p([0,2\pi2^{-N_0}])}\nonumber\\
&\geq I_1-\frac12I_2,
\end{align*}
where
\bbal
&I_I:=\f\|\cos\f(\frac{11}{8}2^n(x-t_nu_0(x))\g)\g\|_{L^p([0,2\pi2^{-N_0}])},\\
&I_2:=\f\|\cos\f(\frac{11}{8}2^n(x-t_nu_0(x))-\pi\Phi(x)\g)
-\cos\f(\frac{11}{8}2^n(x-t_nu_0(x))-\pi\Phi(0)\g)\g\|_{L^p([0,2\pi2^{-N_0}])}.
\end{align*}
Using the basic fact that $|\cos a-\cos b|\leq |a-b|$, then we have
\bbal
&\quad \f|\cos\f(\frac{11}{8}2^n(x-t_nu_0(x))-\pi\Phi(x)\g)
-\cos\f(\frac{11}{8}2^n(x-t_nu_0(x))
-\pi\Phi(0)\g)\g|
\\&\leq\pi\f|\Phi(x)-\Phi(0)\g|=\pi x\f\|\Phi'\g\|_{L^\infty(\R)}\\&
\leq \pi x\f\|\xi\widehat{\Phi}(\xi)\g\|_{L^1(\R)}\leq C2^{-N_0},\quad \forall x\in [0,2\pi2^{-N_0}]
\end{align*}
which leads to
\bbal
I_2
\leq C2^{-(1+\frac{1}{p})N_0}.
\end{align*}
By Lemma \ref{tl2}, we also have
\bbal
I_1\geq 2^{-\frac{N_0}p-2}.
\end{align*}
Inserting the above into \eqref{H} and choosing large enough $N_0$, we complete the proof of Proposition \ref{3pr5}.
\end{proof}

\subsection{Non-Continuous Dependence}

Now, we can prove Theorem \ref{th3}.

{\bf Difference of initial data.}\; Obviously, we see that
\bbal
\|u^{n}_{0}-u_0\|_{B^s_{p,\infty}(\R)}&= \frac1{n}2^{-s}\f\|\frac{\psi(x)}{\psi(0)}\g\|_{L^p(\R)}\to0,\quad \text{as}\;n\to\infty.
\end{align*}

{\bf Difference of solutions.}\; For simplicity, we set $u_n(t_n)=\mathbf{S}_{t_n}(u^n_0)$  and $u(t_n)=\mathbf{S}_{t_n}(u_0)$. Noticing that
\bbal
\Delta_nu&=\Delta_n(u-u^{\rm{ap},1})+\Delta_n(u^{\rm{ap},1}-u^{\rm{ap},2})+\Delta_nu^{\rm{ap},2}-u^{\rm{ap},3}+u^{\rm{ap},3}_n-u^{\rm{ap},4}_n+u^{\rm{ap},4}_n,
\end{align*}
and using Propositions \ref{3pr1}-\ref{3pr5}, we deduce that for large $N_0$ and $n\gg N_0$
\bbal
\|u_n(t_n)-u(t_n)\|_{B^s_{p,\infty}(\R)}&\geq2^{ns}\|\Delta_n(u_n-u)(t_n)\|_{L^p(\R)}-Ct^2_n2^n-Ct_n-\frac{C}{n}\\
&\geq 2^{-\frac{N_0}{p}-3}-Ct^2_n2^n-Ct_n-\frac{C}{n}.
\end{align*}
Letting $n\to\infty$, we complete the proof of Theorem \ref{th3}.
\begin{remark}
Theorem \ref{th3} also holds for the periodic 1-D transport type equation if we choose the periodic initial data
$u_0(x):=
2^{2s}(2^s-1)\sum^{\infty}_{j=3}2^{-js}\cos\f(\frac{11}{8}2^jx\g)$ and $u^n_0(x)=u_0(x)+\frac1n$.
\end{remark}
\section{Proof of Theorem \ref{th1}: Periodic case}\label{sec4}

For the period case, an fundamental observe is that the vector field
\bbal
&u(t,x)=\vec{e}_1+\vec{e}_2\sum\limits^\infty_{j=3}2^{-js}\cos \f(\frac{11}{8}2^{j}(x_1-t)\g),
\end{align*}
is explicit periodic solutions of the incompressible Euler equations
\bbal\pa_t u+u\cdot \nabla u+\nabla P=0, \quad
\mathrm{div\,} u=0,
\end{align*}
with $P = 0$, i.e. this is a pressureless flow.

Let $\lambda_n=1+\fr{1}{n}$, we consider
\bbal
&u(t,x)=\vec{e}_1+\vec{e}_2f(x_1-t)=\f(1,f(x_1-t),0,\ldots,0\g), \\
&u_n(t,x)=\lambda_n\vec{e}_1+\vec{e}_2f\f(x_1-\lambda_nt\g)=\f(\lambda_n,f\f(x_1-\lambda_nt\g),0,\ldots,0\g),
\end{align*}
where $f(x_1)$ is a bounded real-valued periodic function of one variable with the following form
\bbal
f(x_1)=\sum\limits^\infty_{j=3}2^{-js}\cos \f(\frac{11}{8}2^{j}x_1\g),\quad x_1\in \T.
\end{align*}
It is not difficult to verify that both $u(t,x)$ and $u_n(t,x)$ are periodic solutions to the Euler equations \eqref{E} with initial data respectively
\bbal
&u_0(x)=\vec{e}_1+\vec{e}_2f(x_1)=(1,f(x_1),0,\cdots,0),\\
&u_{0,n}(x)=\lambda_n\vec{e}_1+\vec{e}_2f(x_1)=(\lambda_n,f(x_1),0,\cdots,0).
\end{align*}

{\bf Difference of initial data.}\; Obviously, we see that
\bbal
\|u_{0,n}-u_0\|_{B^s_{p,\infty}(\T^d)}= \frac1n\|\vec{e}_1\|_{B^s_{p,\infty}(\T^d)}\leq \frac{C}n\to0,\quad \text{as}\;n\to\infty,
\end{align*}
where we have used $\Delta_j1=0$ if $j\neq -1$.

{\bf Difference of solutions.}\; Obviously,
$$u_n(t,x)-u(t,x)=\f(\fr{1}{n},f(x_1-\lambda_nt)-f(x_1-t),0,\ldots,0\g).$$
Next we shall prove that \bbal
\|u_n(t,x)-u(t,x)\|_{B^s_{p,\infty}(\T^d)}\nrightarrow0,\quad \text{as}\;n\to\infty.
\end{align*}
A straightforward computation shows that
\bbal
f(x_1-\lambda_nt)-f(x_1-t)
&=\sum\limits^\infty_{j=3}2^{-js}\f[\cos \f(\frac{11}{8}2^{j}x_1\g)\mathbf{c}_j(t)+\sin \f(\frac{11}{8}2^{j}x_1\g)\mathbf{d}_j(t)\g],
\end{align*}
where
\bbal
&\mathbf{c}_j(t)=\cos \f(\frac{11}{8}2^{j}\lambda_nt\g)-\cos \f(\frac{11}{8}2^{j}t\g),
\\ &\mathbf{d}_j(t)=\sin \f(\frac{11}{8}2^{j}\lambda_nt\g)-\sin \f(\frac{11}{8}2^{j}t\g).
\end{align*}
Then, we have for some $n$ large enough
\bbal
2^{ns}\De_n[f(x_1-\lambda_nt)-f(x_1-t)]&=\cos \f(\frac{11}{8}2^{n}x_1\g)\mathbf{c}_n(t)
 +\sin \f(\frac{11}{8}2^{n}x_1\g)\mathbf{d}_n(t).
\end{align*}
Letting $\frac{11}{8}2^{n}t_n=n\pi$, then one has $\mathbf{c}_n(t_n)=\pm2$ and $\mathbf{d}_n(t_n)=0$, which give that
\bbal
&\quad 2^{ns}\f\|\De_n[f(x_1-\lambda_nt_n)-f(x_1-t_n)]\g\|_{L^p(\T^d)}
=2(2\pi)^{d-1}\f\|\cos \f(\frac{11}{8}2^{n}x_1\g)\g\|_{L^p(\mathbb{T})}.
\end{align*}
Thus, we have for some $n$ large enough
\bbal
\|u_n(t_n,x)-u(t_n,x)\|_{B^s_{p,\infty}(\T^d)}&\geq \|f(x_1-\lambda_nt_n)-f(x_1-t_n)\|_{B^s_{p,\infty}(\T^d)}-\fr1{n}\\
&\geq2^{ns}\f\|\De_n[f(x_1-\lambda_nt)-f(x_1-t_n)]\g\|_{L^p(\T^d)}-\fr1{n}\\&\geq 2c_0(2\pi)^{d-1}-\fr1{n}.
\end{align*}
Letting $n\to\infty$, we complete the proof of Theorem \ref{th1}.
\section{Proof of Theorem \ref{th1}: Non-periodic case}\label{sec5}
\subsection{Constructions of Approximation Solutions}

Due to the incompressibility condition, the pressure can be eliminated from \eqref{E}. In fact, applying the Leray operator $\mathcal{P}$ to \eqref{E}, then we have
\begin{align}\label{YH1}
\begin{cases}
\pa_t u+\mathcal{P}(u\cdot \nabla u)=0, \\
u(0,x)=u_0(x),
\end{cases}
\end{align}
or equivalently,
\begin{align}\label{YH2}
\begin{cases}
\pa_t u+u\cdot \nabla u=\mathcal{Q}(u\cdot \nabla u), \\
u(0,x)=u_0(x).
\end{cases}
\end{align}
Following the idea as proceed above, we need to establish a series of estimations of actual and approximate solutions for the Euler equations.
\begin{proposition}\label{4pr1} Assume that $(s,p)$ satisfies \eqref{con1} and $u_0\in B^{s}_{p,\infty}(\R^d)$ with $\div\ u_0=0$.
Suppose that $u$ is the solution of the original system \eqref{YH2} and $u^{\rm{ap},i}(i=1,2,3)$ solves the following systems
\bal\label{e1}
&\bca
\pa_t u^{\rm{ap},1}+u_0\cdot \nabla u^{\rm{ap},1}=\mathcal{Q}(u_0\cdot \nabla u_0), \\
u^{\rm{ap},1}(0,x)=u_0(x),
\eca\\
&
\bca
\pa_t u^{\rm{ap},2}+u_0\cdot \nabla u^{\rm{ap},2}=0, \\
u^{\rm{ap},2}(0,x)=u_0(x),
\eca\\
&
\bca
\pa_t u^{\rm{ap},3}_n+u_0\cdot \nabla u^{\rm{ap},3}_n=0, \\
u^{\rm{ap},3}_n(0,x)=\Delta_nu_0(x).
\eca
\end{align}
Denoting $u^{\rm{ap},4}_n=(\Delta_nu_0)(x-tu_0(x))$ which satisfies the following system
\bal\label{euler4}
\bca
\pa_tu^{\rm{ap},4}_n+u_0\cdot \nabla u^{\rm{ap},4}_n=-tu_0\cdot\nabla u_0\cdot(\nabla\Delta_nu_0)(x-tu_0(x)), \\
u^{\rm{ap},4}(0,x)=\Delta_nu_0(x).
\eca
\end{align}
For small $t>0$, we have for some positive constant $C$ independent of $t$ and $n$
\begin{align}
&\f\|u-u^{\rm{ap},1}\g\|_{B_{p,\infty}^{s-1}(\R^d)}\leq Ct^2,\\
&\f\|u^{\rm{ap},1}-u^{\rm{ap},2}\g\|_{B^{s}_{p,\infty}(\R^d)}\leq Ct,\\
&2^{ns}\f\|\Delta_nu^{\rm{ap},2}-u^{\rm{ap},3}_n\g\|_{L^p(\R^d)}\leq Ct,
\end{align}
and
\begin{align}
&2^{ns}\f\|u^{\rm{ap},3}_n-u^{\rm{ap},4}_n\g\|_{L^p(\R^d)}\leq C\int_0^t\tau2^{ns}\|(\nabla\Delta_nu_0)(x-\tau u_0(x))\|_{L^{p}(\R^d)}\dd\tau.
\end{align}
\end{proposition}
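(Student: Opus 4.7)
The plan is to mirror the four-step decomposition developed in Section \ref{sec3} for the 1-D Burgers equation, but now on $\R^d$ for the Euler system, tracking carefully the extra pressure contribution $\q(u\cd\n u)$ that arises after applying the Leray projector in \eqref{YH2}. In each case I set $w$ to be the difference between two adjacent approximants, derive a transport-type equation for $w$ with explicit forcing, and apply the linear transport estimate of Lemma \ref{zzgj} with drift $u_0\in B^s_{p,\infty}\hookrightarrow C^{0,1}$. Divergence-freeness of $u_0$ is used at every step to prevent loss of regularity, both in the transport estimate (which accepts $\D u_0=0$ under a weaker threshold) and when running an $L^p$ energy estimate.

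For the first inequality, set $w=u-u^{\rm{ap},1}$. Newton--Leibniz together with Lemma \ref{lp} applied to \eqref{YH2} first yields $\|u(t)-u_0\|_{B^{s-1}_{p,\infty}(\R^d)}\leq Ct$ on $[0,T]$, where the uniform bound $\|u\|_{L^\infty_TB^s_{p,\infty}}\leq C\|u_0\|_{B^s_{p,\infty}}$ of Theorem \ref{th0} is used. Subtracting \eqref{e1} from \eqref{YH2} gives
\bbal
\pa_t w+u\cd\n w=-(u-u_0)\cd\n u^{\rm{ap},1}+\q(u\cd\n u)-\q(u_0\cd\n u_0),\qquad w(0)=0.
\end{align*}
Splitting the pressure difference as $\q((u-u_0)\cd\n u)+\q(u_0\cd\n(u-u_0))$ and invoking Lemma \ref{lp} and Lemma \ref{lem:P}, both forcing terms are bounded in $B^{s-1}_{p,\infty}(\R^d)$ by $C\|u-u_0\|_{B^{s-1}_{p,\infty}(\R^d)}\leq Ct$. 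Feeding this into Lemma \ref{zzgj} with $\sigma=s-1$ and integrating in time produces the $Ct^2$ bound.

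For the second inequality, $w=u^{\rm{ap},1}-u^{\rm{ap},2}$ solves a transport equation with zero initial data whose forcing $\q(u_0\cd\n u_0)$ is uniformly bounded in $B^s_{p,\infty}(\R^d)$ by Lemma \ref{lem:P}, so Lemma \ref{zzgj} directly delivers the $Ct$ bound. For the third, $w=\De_n u^{\rm{ap},2}-u^{\rm{ap},3}_n$ satisfies $\pa_t w+u_0\cd\n w=-[\De_n,u_0\cd\n]u^{\rm{ap},2}$ with $w(0)=0$; pairing with $|w|^{p-2}w$, using $\D u_0=0$ to integrate the drift by parts, multiplying by $2^{ns}$, and applying the commutator bound of Lemma \ref{jhz} together with Gr\"{o}nwall's inequality yields the claim. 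For the fourth, one verifies by direct differentiation that $u^{\rm{ap},4}_n(t,x)=(\De_nu_0)(x-tu_0(x))$ satisfies \eqref{euler4}, whence $w=u^{\rm{ap},4}_n-u^{\rm{ap},3}_n$ solves a transport equation whose forcing is exactly $-tu_0\cd\n u_0\cd(\n\De_nu_0)(x-tu_0(x))$; the claimed integral representation then follows from an $L^p$ energy estimate and Gr\"{o}nwall.

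The principal technical delicacy lies in the first estimate, where the nonlocal Riesz-based operator $\q(u\cd\n u)$ must be decoupled from the variable drift $u$ (not $u_0$) so that Lemma \ref{lem:P} applies with the lower-norm factor placed on $u-u_0$ in $B^{s-1}_{p,\infty}$; the remaining three estimates are then faithful transpositions of Propositions \ref{3pr2}--\ref{3pr4} once $\D u_0=0$ is used at every step to cancel the transport contribution in the $L^p$-energy.
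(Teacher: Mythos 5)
Your proposal is correct and follows essentially the same route as the paper: the same preliminary bound $\|u(t)-u_0\|_{B^{s-1}_{p,\infty}}\leq Ct$, the same splitting $\q(u\cd\n u)-\q(u_0\cd\n u_0)=\q((u-u_0)\cd\n u)+\q(u_0\cd\n(u-u_0))$ handled by Lemma \ref{lem:P}, and the same transport/commutator/$L^p$-energy arguments with Gr\"{o}nwall for the remaining three differences. The only (harmless) deviations are presentational: you spell out the third estimate, which the paper carries over implicitly from its 1-D analogue, and you work from \eqref{YH2} rather than \eqref{YH1} for the preliminary bound.
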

\begin{proof} By the Newton-Leibniz formula, one obtain from \eqref{YH1} that
\bbal
\|u(t)-u_0\|_{B^{s-1}_{p,\infty}(\R^d)}\leq&~ \int_0^t\|\mathcal{P}(u\cdot \nabla u)\|_{B^{s-1}_{p,\infty}(\R^d)}\dd\tau
\leq \int_0^t\left\|\mathcal{P}\left(u\cd\na u\right)\right\|_{\dot{B}^{s-1}_{p,\infty}(\R^d)\cap\dot{B}^0_{p,1}(\R^d)} \dd\tau\nonumber\\
\leq&~ C\int_0^t\left\|u\otimes u\right\|_{\dot{B}^{s}_{p,\infty}\cap\dot{B}^1_{p,1}(\R^d)} \dd\tau
\leq C\int_0^t\left\|u\otimes u\right\|_{B^{s}_{p,\infty}(\R^d)} \dd\tau\nonumber\\
\leq&~Ct\|u_0\|_{B^{s}_{p,\infty}(\R^d)}^2\leq Ct.
\end{align*}
Notice that (see Lemma \ref{lem:P})
\begin{align*}
\|\mathcal{Q}(u\cdot \nabla u-u_0\cdot \nabla u_0)\|_{B_{p, \infty}^{s-1}(\R^d)} & \leq \f\|\mathcal{Q}((u-u_0)\cdot \nabla u+u_0\cdot \nabla (u-u_0))\g\|_{B_{p, \infty}^{s-1}(\R^d)}\\
&\leq C\|u-u_0\|_{B_{p, \infty}^{s-1}(\R^d)}\f(\|u\|_{B_{p, \infty}^{s}(\R^d)}+\|u_0\|_{B_{p, \infty}^{s}(\R^d)}\g)\\
&\leq C\|u-u_0\|_{B_{p, \infty}^{s-1}(\R^d)},
\end{align*}
and applying Lemma \ref{zzgj}, we have
\bbal
\|u-u^{\rm{ap},1}\|_{B_{p,\infty}^{s-1}}\leq&~\int_0^t\|(u-u_0)\cd\na u^{\rm{ap},1}\|_{B_{p,\infty}^{s-1}(\R^d)}+\|\mathcal{Q}(u\cdot \nabla u-u_0\cdot \nabla u_0)\|_{B_{p,\infty}^{s-1}(\R^d)}\dd\tau\\
\leq&~ C\int_0^t\|u-u_0\|_{B_{p,\infty}^{s-1}(\R^d)}\dd\tau\leq Ct^2.
\end{align*}
Applying Lemma \ref{zzgj}, we have
\bbal
\|u^{\rm{ap},1}-u^{\rm{ap},2}\|_{B_{p,\infty}^{s}(\R^d)}\leq&~ \int_0^t\|\mathcal{Q}(u_0\cdot \nabla u_0)\|_{B_{p,\infty}^{s}(\R^d)}\dd\tau
\leq Ct.
\end{align*}
We denote $w:=u^{\rm{ap},3}_n-\Delta_nu^{\rm{ap},4}$. From \eqref{h1}, one has
\bbal
\bca
\pa_tw+u_0\cd\na w=tu_0\cdot\nabla u_0\cdot(\nabla\Delta_nu_0)(x-tu_0(x)),\\
w(0,x)=0.
\eca
\end{align*}
From the above, we obtain
\bbal
\|w\|_{L^p(\R^d)}\leq&~ \int_0^t\|\na u_0\|_{L^{\infty}(\R^d)}\|w\|_{L^p(\R^d)}\dd\tau\\
&\quad+\|u_0\|_{L^{\infty}(\R^d)}\|\nabla u_0\|_{L^{\infty}(\R^d)}\int_0^t\tau\|(\nabla\Delta_nu_0)(x-\tau u_0(x))\|_{L^{p}(\R^d)}\dd\tau\\
\leq&~C\int_0^t\|w\|_{L^p(\R^d)}\dd\tau+C\int_0^t\tau\|(\nabla\Delta_nu_0)(x-\tau u_0(x))\|_{L^{p}(\R^d)}\dd\tau.
\end{align*}
By Gronwall's inequality, we complete the proof of Proposition \ref{4pr1}.
\end{proof}
\subsection{Non-Continuous Dependence of Euler equations}
We should notice that, the higher-dimension case and the incompressibility condition makes the choice of initial data somewhat complicated. Motivated by the one dimension case (see Definition \ref{def}), we can construct technically the divergence-free vector field $u_0$.
Firstly, We need to introduce smooth, radial cut-off functions to localize the frequency region. Precisely, we define even, real-valued and non-negative functions $\widehat{\phi_1}\in \mathcal{C}^\infty_0(\mathbb{R})$ and $\widehat{\psi_1}\in \mathcal{C}^\infty_0(\mathbb{R})$ with values in $[0,1]$ which satisfy
\bbal
&\widehat{\phi_1}(\xi)=
\bca
1, \quad \mathrm{if}\  |\xi|\leq \frac{1}{4^d},\\
0, \quad \mathrm{if}\  |\xi|\geq \frac{1}{2^d},
\eca
\\
&\widehat{\psi_1}(\xi)=
\bca
1, \quad \mathrm{if}\ \frac12\frac{1}{\sqrt{d}}\leq |\xi|\leq \frac{5}{8}\frac{1}{\sqrt{d}},\\
0, \quad \mathrm{if}\  |\xi|\geq \frac{3}{4}\frac{1}{\sqrt{d}} \ \mathrm{or} \ |\xi|\leq  \frac38\frac{1}{\sqrt{d}}.
\eca
\end{align*}
It is easy to check that both $\phi_1(x)$ and $\psi_1(x)$ are real-valued and continuous functions on $\R$. Furthermore, it holds that $\|\phi_1\|_{L^\infty}=\phi_1(0)>0$ and $\|\psi_1\|_{L^\infty}=\psi_1(0)>0$.

\begin{definition}[Initial Data]\label{def-e} Let $s>1+d/p$ and $p\in[1,\infty]$. We define the divergence-free vector field $u_0$  as follows:
$$u_0(x):=\sum\limits^\infty_{j=3}2^{-j(s+1)}\nabla^\bot f_{j}(x),\quad x\in\R^d,$$
where we denote
\bbal
\nabla^\bot:=\bca
(\pa_2,\,-\pa_1), \quad  &d=2,\\
(\pa_2,\,-\pa_1,\,0,\ldots,\,0), \quad &d\geq3,
\eca
\end{align*}
and
\bbal
f_{j}(x):=\frac{8}{11}\sin \f(\frac{11}{8}2^{j}x_1\g)\prod_{i=1}^d\bar{\phi}(x_i)\quad\text{with}\quad
\bar{\phi}(x_i):=
\frac{\phi_1(x_i)}{\phi_1(0)}.
\end{align*}
\end{definition}
\begin{definition}[Perturbation of Initial Data]\label{def-e1} Let $u_0$ be given by Definition \ref{def-e}. Let $s>1+d/p$ and $p\in[1,\infty]$. We define the perturbation of $u_0$ as follows:
\bbal
u_0^{n}(x):=u_0(x)-\frac1n \nabla^\bot\bar{\Phi}(x),\quad x\in\R^d,
\end{align*}
where
\bbal
\bar{\Phi}(x):=\frac{1}{\psi^d_1(0)}(\pa_{x_2}^{-1}\psi_1)(x_2)\prod_{i\in\{1,3,\ldots,d\}}\psi_1(x_i).
\end{align*}
\end{definition}
\begin{remark}\label{hyl}
The anti-derivative $\pa_{x_2}^{-1}$ of $\psi_1$ can be defined by the Fourier multiplier with the symbol $\fr1\xi$, namely,
$$(\pa_{x}^{-1}\psi_1)(x):=\mathcal{F}^{-1}\f(-\mathrm{i}{\xi}^{-1}\widehat{\psi_1}(\xi)\g)(x).$$ The appearance of $\pa_{x_2}^{-1}\psi_1$ is to ensure that $\pa_{x_2}\Bar{\Phi}(0)=1$. In fact, $\pa_{x_2}\Bar{\Phi}(x)=\prod_{i=1}^d\frac{\psi_1(x_i)}{\psi_1(0)}.$
Thus one has $\f\|\widehat{\pa_{x_2}\Bar{\Phi}}\g\|_{L^1(\R^d)}=(2\pi)^d$ and
$$\pa_{x_2}\Bar{\Phi}(x)-\pa_{x_2}\Bar{\Phi}(0)=\f(\int_0^1\f(\nabla\pa_{x_2}\Bar{\Phi}\g)(\tau x)\dd\tau\g)x,$$
which furthermore gives that
$$\f|\pa_{x_2}\Bar{\Phi}(x)-\pa_{x_2}\Bar{\Phi}(0)\g|\leq C\f\|\nabla\pa_{x_2}\Bar{\Phi}\g\|_{L^\infty(\R^d)}|x|\leq C|x|.$$
This fact will be used later.
\end{remark}
\begin{lemma}\label{js2} Let $u_0$ and $u^n_0$ be given by Definitions \ref{def-e}-\ref{def-e1}. Assume that $(s,p)$ satisfies $s>1+d/p$ with $1\leq p\leq \infty$.
Then there exists some sufficiently large integer $M_0$ and constant $C_s>0$ independent of $M_0$  such that for arbitrarily  large integer $n\gg M_0$
\begin{align*}
&u^{(1)}_0(\mathbf{0})=0,\quad\Delta_n\Bar{\Phi}=0,\\
&\Delta_nu_0(x)=\Delta_nu^n_0(x)=2^{-n(s+1)}\nabla^\bot f_{n}(x), \\
&u_0\in W^{1,\infty}(\R^d)\cap B^s_{p,\infty}(\R^d) \quad \text{with}\quad \|u_0\|_{W^{1,\infty}(\R^d)\cap B^s_{p,\infty}(\R^d)}\leq C_s.
\end{align*}
\end{lemma}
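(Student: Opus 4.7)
The plan is to reduce all four assertions to Fourier-side computations on the building blocks $f_j$ and $\bar\Phi$, together with routine derivative bounds.

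For (i), by definition $u_0^{(1)}=\sum_{j\geq 3}2^{-j(s+1)}\partial_{x_2}f_j$ for every $d\geq 2$, and each summand contains the factor $\sin\!\bigl(\tfrac{11}{8}2^j x_1\bigr)$, which vanishes at $x_1=0$; hence $u_0^{(1)}(\mathbf{0})=0$ immediately. For the first identity in (ii), a direct Fourier-support calculation shows $\mathrm{supp}\,\widehat{\bar\Phi}\subset\prod_{i=1}^d\bigl\{|\xi_i|\leq 3/(4\sqrt d)\bigr\}$; the factor $-\mathrm{i}\xi_2^{-1}$ coming from $\partial_{x_2}^{-1}$ does not enlarge the support because $\widehat{\psi_1}$ vanishes near $\xi_2=0$. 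Consequently $|\xi|\leq 3/4$ on this support, which lies strictly below the support $\{|\xi|\geq 3\cdot 2^{n-2}\}$ of $\varphi(2^{-n}\cdot)$ for every $n\geq 1$; therefore $\Delta_n\bar\Phi\equiv 0$ once $n\gg M_0$, and in particular $\Delta_n u_0^n=\Delta_n u_0$.

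The central step is the identity in (iii). A short computation yields
\bbal
\widehat{f_j}(\xi)=\frac{C_d}{\mathrm{i}\,\phi_1(0)^d}\Bigl[\widehat{\phi_1}\bigl(\xi_1-\tfrac{11}{8}2^j\bigr)-\widehat{\phi_1}\bigl(\xi_1+\tfrac{11}{8}2^j\bigr)\Bigr]\prod_{i\geq 2}\widehat{\phi_1}(\xi_i),
\end{align*}
whose support sits inside the box $\bigl\{\bigl||\xi_1|-\tfrac{11}{8}2^j\bigr|\leq 2^{-d},\ |\xi_i|\leq 2^{-d}\ \text{for}\ i\geq 2\bigr\}$. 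Using $\tfrac{4}{3}<\tfrac{11}{8}<\tfrac{3}{2}$, an arithmetic check (valid for $j\geq 3$ and $d\geq 2$) shows this box lies strictly inside the annulus $\{\tfrac{4}{3}2^j\leq |\xi|\leq \tfrac{3}{2}2^j\}$ where $\varphi(2^{-j}\cdot)\equiv 1$. Hence $\Delta_k f_j=\delta_{jk}f_j$, and since $\nabla^\bot$ is a Fourier multiplier it commutes with $\Delta_n$ and preserves supports; therefore $\Delta_n u_0=2^{-n(s+1)}\nabla^\bot f_n$, and combining with (ii) gives the same formula for $u_0^n$.

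For (iv), the dominant contribution in every derivative of $f_j$ comes from $\partial_{x_1}$ hitting the oscillating factor, which costs one power of $2^j$. Thus $\|\nabla^\bot f_j\|_{L^p}\leq C_s 2^j$ and $\|\nabla\nabla^\bot f_j\|_{L^\infty}\leq C_s 2^{2j}$, so the formula from (iii) yields $\|\Delta_n u_0\|_{L^p}\leq C_s 2^{-ns}$ and hence $\|u_0\|_{\dot B^s_{p,\infty}}\leq C_s$. The remaining $L^p$, $L^\infty$ and $\nabla L^\infty$ parts of the norm are controlled by summing the defining series $\sum_j 2^{-j(s+1)}\|\nabla^\bot f_j\|_X$, a geometric series convergent under $s>1+d/p>1$. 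The only delicate point is the annular containment in (iii): the parameters $\tfrac{11}{8}$ and $2^{-d}$ are engineered precisely so that $\mathrm{supp}\,\widehat{f_j}$ lands in the correct Littlewood--Paley shell; everything else is routine bookkeeping.
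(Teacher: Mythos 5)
Your proposal is correct and is exactly the argument the paper has in mind: the paper omits the proof of this lemma with the remark "as in Lemma \ref{js1}," and your Fourier-support computation for $\widehat{f_j}$ (placing it in the shell $\{\tfrac43 2^j\leq|\xi|\leq\tfrac32 2^j\}$ where $\varphi(2^{-j}\cdot)\equiv1$), the support argument for $\widehat{\bar\Phi}$, and the derivative bounds giving the $W^{1,\infty}\cap B^s_{p,\infty}$ control are the natural $d$-dimensional extension of that one-dimensional lemma. No gaps.
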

\begin{proof} As in Lemma \ref{js1}, it is not difficult to complete the proof and we omit the details.
\end{proof}
Next, we need compute the difference between the approximate solution $u^{\rm{ap},4}_{n,1}$ of original solution and approximate solution $u^{\rm{ap},4}_{n,2}$ of perturbed solution.
\begin{proposition}\label{4pr2} Let $u_0$  be given by Definition \ref{def-e} and a perturbation sequence of initial data $u^n_0$ be given by Definition \ref{def-e1}. Setting
\bbal
u^{\rm{ap},4}_{n,1}=(\Delta_nu_0)(x-tu_0(x)) \quad\text{and}\quad u^{\rm{ap},4}_{n,2}=(\Delta_nu^n_0)(x-tu^{n}_0(x)).
\end{align*}
For small $t_n=\frac{8}{11}\pi n2^{-n}$, we have for some positive constant $C$ independent of $t_n$ and $n$
\begin{align*}
2^{ns}\f\|\f(u^{\rm{ap},4}_{n,1}-u_{n,2}^{\rm{ap},4}\g)(t_n)\g\|_{L^p(\R^d)}\geq 2^{-\frac{d}{p}M_0}\f(2^{-d-1}-C2^{-M_0}\g)-C2^{-n}-Ct_n.
\end{align*}
\end{proposition}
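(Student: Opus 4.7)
The strategy mirrors the one used for the one-dimensional Proposition \ref{3pr5}; the only genuinely new points are handling the vector structure of $\nabla^\bot f_n$ and the $d$-fold product cutoff $\prod_i\bar\phi(x_i)$. I would lower bound the full $L^p(\R^d)$ norm by the $L^p$ norm on the small cube $Q_{M_0}:=[0,2\pi 2^{-M_0}]^d$, and work entirely inside $Q_{M_0}$, where $\bar\phi(x_i)\ge 1/2$ once $M_0$ is taken sufficiently large.

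The starting point is Lemma \ref{js2}, which yields $\Delta_n u_0=\Delta_n u_0^n=2^{-n(s+1)}\nabla^\bot f_n$ for all $n\gg M_0$, so that $u_{n,1}^{\rm{ap},4}$ and $u_{n,2}^{\rm{ap},4}$ differ only through the Lagrangian point at which the common vector field $\Delta_n u_0$ is evaluated. A direct differentiation of $f_n(x)=\frac{8}{11}\sin(\frac{11}{8}2^n x_1)\prod_i\bar\phi(x_i)$ shows that, modulo an $L^\infty$-correction of size $O(2^{-n})$ after rescaling by $2^{ns}$, only the second component of $\Delta_n u_0$ survives at the principal order, in the explicit form
\begin{equation*}
2^{ns}(\Delta_n u_0)^{(2)}(y)=-\cos\bigl(\tfrac{11}{8}2^n y_1\bigr)\prod_{i=1}^d\bar\phi(y_i)+O(2^{-n}).
\end{equation*}
At our choice $t_n=\frac{8\pi n}{11\cdot 2^n}$ and using $u_0^{n,(1)}=u_0^{(1)}-\frac{1}{n}\partial_2\bar\Phi$, the arguments of the two cosines at $y=x-t_n u_0(x)$ and $y=x-t_n u_0^n(x)$ differ by exactly $\pi\partial_2\bar\Phi(x)$, while the shifted cutoffs $\prod_i\bar\phi(y_i)$ may be replaced by $\prod_i\bar\phi(x_i)$ at a cost of $O(t_n)$ since $\bar\phi$ is Lipschitz and $u_0\in L^\infty(\R^d)$.

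With $\theta_n(x):=\frac{11}{8}2^n(x_1-t_n u_0^{(1)}(x))$, the whole problem reduces to the scalar lower bound
\begin{equation*}
\bigl\|\cos\theta_n-\cos(\theta_n+\pi\partial_2\bar\Phi)\bigr\|_{L^p(Q_{M_0})}\ge 2\bigl\|\cos\theta_n\bigr\|_{L^p(Q_{M_0})}-\bigl\|\cos(\theta_n+\pi)-\cos(\theta_n+\pi\partial_2\bar\Phi)\bigr\|_{L^p(Q_{M_0})},
\end{equation*}
obtained by inserting the midpoint $\cos(\theta_n+\pi)=-\cos\theta_n$. The first term on the right is handled by Lemma \ref{tl3} applied with $f=u_0^{(1)}\in W^{1,\infty}(\R^d)$ and $N_0=M_0$, yielding $\|\cos\theta_n\|_{L^p(Q_{M_0})}\ge 2^{-\frac{d}{p}M_0-2}$. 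For the second, Remark \ref{hyl} gives $|\partial_2\bar\Phi(x)-\partial_2\bar\Phi(0)|\le C|x|\le C2^{-M_0}$ on $Q_{M_0}$, and combining with $|\cos a-\cos b|\le|a-b|$ produces a bound of order $2^{-M_0}|Q_{M_0}|^{1/p}=C 2^{-M_0-\frac{d}{p}M_0}$. Multiplying by the pointwise lower bound $\prod_i\bar\phi(x_i)\ge 2^{-d}$ and subtracting the $O(2^{-n})$ and $O(t_n)$ errors collected along the way gives precisely the announced inequality.

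The principal obstacle is purely bookkeeping: one must simultaneously track (i) the $O(2^{-n})$ remainder in the expansion of $2^{ns}\nabla^\bot f_n$ coming from derivatives landing on $\bar\phi$ rather than on $\sin$, (ii) the $O(t_n)$ error produced by freezing the slowly-varying cutoffs at $x$, and (iii) the $O(2^{-M_0})$ spatial variation of $\partial_2\bar\Phi$ on $Q_{M_0}$. It is (iii) that forces the loss of the universal constant one would obtain pointwise at $x=0$ (where $u_0^{(1)}(0)=0$ and $\partial_2\bar\Phi(0)=1$ give the clean identity $\cos 0-\cos\pi=2$) and replaces it by the quantitative factor $(2^{-d-1}-C2^{-M_0})$ appearing in the statement.
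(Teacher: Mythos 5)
Your proposal is correct and follows essentially the same route as the paper's proof: isolate the second component of $2^{ns}\Delta_n u_0$ up to an $O(2^{-n})$ remainder, freeze the cutoff product at $x$ with an $O(t_n)$ cost, use $\cos(\theta_n+\pi)=-\cos\theta_n$ together with Remark \ref{hyl} to reduce to $2\|\cos\theta_n\|_{L^p(Q_{M_0})}$ minus a $C2^{-(1+\frac{d}{p})M_0}$ error, and conclude via Lemma \ref{tl3}. The only (immaterial) deviation is the sign of the $\pi\partial_2\bar\Phi$ phase shift, which does not affect the bound.
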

\begin{proof}
We notice that for some sufficiently large $n\in \mathbb{N}^+$
 \begin{align*}
 \Delta_{n}u_0=2^{-n(s+1)}
\f(
\pa_2f_n,\;
-\pa_1f_n,0,\cdots,0
\g).
    \end{align*}
From which and Lemma \ref{js2}, one has for $n\gg M_0$ ($M_0$ shall be fixed later)
\bbal
&\quad (\Delta_n
u_0)^{(2)}(x)=(\Delta_n
u^n_0)^{(2)}(x)\\
&=-2^{-ns}
\cos\f(\frac{11}{8}2^nx_1\g)\prod_{i=1}^d\bar{\phi}(x_i) -2^{-n(s+1)}
\frac{8}{11}\sin\f(\frac{11}{8}2^nx_1\g)\bar{\phi}'(x_1)\prod_{i=2}^d\bar{\phi}(x_i).
\end{align*}
Thus
\bbal
\f(2^{ns}u^{\rm{ap},4}_{n,2}\g)^{(2)}(t,x)
&=-\cos\f(\frac{11}{8}2^n\f(x_1-tu^{(1)}_0(x)-\frac{t}n\pa_2\bar{\Phi}(x)\g)\g)\prod_{i=1}^d\bar{\phi}(x_i)\\
&-\cos\f(\frac{11}{8}2^n\f(x_1-tu^{(1)}_0(x)-\frac{t}n\pa_2\bar{\Phi}(x)\g)\g)\f[\prod_{i=1}^d\bar{\phi}(x_i-t(u_0^n(x))^{(i)})-\prod_{i=1}^d\bar{\phi}(x_i)\g]\\
&-2^{-n}\frac{8}{11}\sin\f(\frac{11}{8}2^n(x_1-t(u_0^n(x))^{(1)})\g)\bar{\phi}'(x_1-t(u_0^n(x))^{(1)})\prod_{i=2}^d\bar{\phi}(x_i-t(u_0^n(x))^{(i)}),
\end{align*}
and
\bbal
\f(2^{ns}u^{\rm{ap},4}_{n,1}\g)^{(2)}(t,x)
&=-\cos\f(\frac{11}{8}2^n(x_1-tu^{(1)}_0(x))\g)\prod_{i=1}^d\bar{\phi}(x_i)\\
&-\cos\f(\frac{11}{8}2^n\f(x_1-tu^{(1)}_0(x)\g)\g)\f[\prod_{i=1}^d\bar{\phi}(x_i-tu^{(i)}_0(x))-\prod_{i=1}^d\bar{\phi}(x_i)\g]\\
&-2^{-n}\frac{8}{11}\sin\f(\frac{11}{8}2^n(x_1-tu^{(1)}_0(x))\g)\bar{\phi}'(x_1-tu^{(1)}_0(x))\prod_{i=2}^d\bar{\phi}(x_i-tu^{(i)}_0(x)).
\end{align*}
Letting $\frac{11}{8}2^{n}t_n=n\pi$, then one has
\bbal
&\quad 2^{ns}\f(u^{\rm{ap},4}_{n,2}-u_{n,1}^{\rm{ap},4}\g)^{(2)}(t_n,x)
=\mathcal{A}(x)
+\text{Remaining terms},
\end{align*}
where
$$\mathcal{A}(x):=\f[\cos\f(\frac{11}{8}2^n(x_1-t_nu^{(1)}_0(x))\g)-\cos\f(\frac{11}{8}2^n\f(x_1-t_nu^{(1)}_0(x)\g)-\pi\pa_2\bar{\Phi}(x)\g)\g]\prod_{i=1}^d\bar{\phi}(x_i).$$
By direct computations, we have
$$\f\|\text{Remaining terms}\g\|_{L^p(\R^d)}\leq C2^{-n}+Ct_n.$$

{\bf Case for $p=\infty$.}\; Obviously, $u^{(1)}_0(\mathbf{0})=0$ and $\pa_2\bar{\Phi}(\mathbf{0})=\bar{\phi}(0)=1$. Thus we have
\bbal
2^{ns}\f\|\f(u^{\rm{ap},4}_{n,2}-u^{\rm{ap},4}_{n,1}\g)^{(2)}(t_n,x)\g\|_{L^\infty(\R^d)}
&\geq \mathcal{A}(0)-C2^{-n}-Ct_n
\geq  2-C2^{-n}-Ct_n.
\end{align*}

{\bf Case for $1\leq p<\infty$.}\; Noticing that there exists some integer $M_0>0$ such that
\bbal
\forall x\in[0,2\pi2^{-M_0}], \qquad \phi_1(x)\geq \frac12\phi_1(0) \quad \Longleftrightarrow\quad \bar{\phi}(x)\geq\fr12,
\end{align*}
and using $\pa_2\bar{\Phi}(\mathbf{0})=1$ again, we obtain that
\bbal
&\quad 2^{ns}\f\|\f(u^{\rm{ap},4}_{n,2}-u^{\rm{ap},4}_{n,1}\g)^{(2)}(t_n)\g\|_{L^p(\R^d)}
\nonumber\\&\geq 2^{-d}\f\|\cos\f(\frac{11}{8}2^n\f(x_1-t_nu^{(1)}_0(x)\g)-\pi \pa_2\bar{\Phi}(x)\g)-\cos\f(\frac{11}{8}2^n(x_1-t_nu^{(1)}_0(x))\g)\g\|_{L^p([0,2\pi2^{-M_0}]^d)}\nonumber\\
&\quad-C2^{-n}-Ct_n\nonumber\\
&\geq2^{1-d}\f\|\cos\f(\frac{11}{8}2^n(x_1-t_nu^{(1)}_0(x))\g)\g\|_{L^p([0,2\pi2^{-M_0}]^d)}-C2^{-n}-Ct_n-C2^{-(1+\frac{d}{p})M_0},
\end{align*}
where we have used (see Remark \ref{hyl})
\bbal
&\quad\f\|\cos\f(\frac{11}{8}2^n\f(x_1-t_nu^{(1)}_0(x)\g)-\pi \pa_2\bar{\Phi}(x)\g)-\cos\f(\frac{11}{8}2^n\f(x_1-t_nu^{(1)}_0(x))\g)-\pi \pa_2\bar{\Phi}(0)\g)\g\|_{L^p([0,2\pi2^{-M_0}]^d)}\nonumber\\
&\leq \pi\f\|\pa_2\bar{\Phi}(x)-\pa_2\bar{\Phi}(0)\g\|_{L^p([0,2\pi2^{-M_0}]^d)}\leq
C2^{-(1+\frac{d}{p})M_0}.
\end{align*}
Thus by Lemma \ref{tl3}, one has
\bbal
2^{ns}\f\|\f(u^{\rm{ap},4}_{n,2}-u^{\rm{ap},4}_{n,1}\g)^{(2)}(t_n)\g\|_{L^p(\R^d)}\geq2^{-d-1-\frac{d}{p}M_0}-C2^{-n}-Ct_n-C2^{-(1+\frac{d}{p})M_0}.
\end{align*}
We complete the proof of Proposition \ref{4pr2}.
\end{proof}
{\bf Difference of initial data.}\; Obviously, we see that
\bbal
\|u^{n}_{0}-u_0\|_{B^s_{p,\infty}(\R^d)}= \frac1{n}\|\nabla^\bot\bar{\Phi}\|_{B^s_{p,\infty}(\R^d)}\leq \frac{C}n\to0,\quad \text{as}\;n\to\infty.
\end{align*}

{\bf Difference of solutions.}\; Using Proposition \ref{4pr1} and Proposition \ref{4pr2}, we deduce that for fixed large $M_0$
\bbal
\|u_n(t_n)-u(t_n)\|_{B^s_{p,\infty}(\R^d)}&\geq2^{ns}\|\Delta_n(u_n-u)(t_n)\|_{L^p(\R^d)}-Ct^2_n2^n-Ct_n-\frac{C}{n}\\
&\geq c2^{-\frac{d}{p}M_0}-Ct^2_n2^n-Ct_n-\frac{C}{n}.
\end{align*}
Letting $n\to\infty$, we complete the proof of Theorem \ref{th1}.

\section{Conclusion and Discussion }
In this paper, we consider the Cauchy problem for the Euler equations in the whole space and the torus, and show that the Euler equations are not
locally well-posed (in the sense of Hadamard) in $B^s_{p,\infty}$ and $C^{k,\alpha}$ due to the failure of the continuous dependence of solution map.
A remarkable novelty of the proof is the construction of an approximate solution to the Burgers equation. To the best of our knowledge, this new approximation technique is the first one addressing the ill-posedness issue on the transport equation. We should remark that the approach also possess other application. In particular, based on this approximate solution and modifying the initial data, we can prove that the solution map of the Euler equation cannot be continuous as a function of the time variable at time zero in $B^s_{p,\infty}(\R^d)$ (see Appendix).
Finally, it should be mention that, the method we used in this paper do not depend heavily on the form of explicit solutions of equations (it is general not easy to obtain especially for the whole space case) and can be applied some shallow water equations such as the Degasperis-Procesi, Camassa-Holm equation, Fornberg-Whitham equation and other transport type equations.
\section*{Appendix}
{\bf Proof of Corollary \ref{co2}}\,
We take the divergence-free vector field $u_0$ in the form
$$u_0(x)=2^{2s+3}(2^s-1)\sum\limits^\infty_{j=3}2^{-j(s+1)}\nabla^\bot \f[\cos \f(\frac{11}{8}2^{j}x_1\g)\frac{\pa_{x_2}^{-1}\psi_1(x_2)}{\psi_1(0)}\prod_{i\in\{1,3,\ldots,d\}}\frac{\phi_1(x_i)}{\phi_1(0)}\g],\;x\in \R^d.$$
Following the same method as that in Theorem \ref{th1}, one has
\bbal
\|u_n(t_n)-u_0\|_{B^s_{p,\infty}(\R^d)}
&\geq2^{ns}\f\|(\Delta_nu_0)^{(2)}(x-t_nu_0(x))-(\Delta_nu_0)^{(2)}(x)\g\|_{L^p(\R^d)}-Ct^2_n2^n-Ct_n
\\&\geq c\f\|\sin\f(\frac{11}{8}2^nx_1-\pi u^{(1)}_0(x)\g)-\sin\f(\frac{11}{8}2^nx_1\g)\g\|_{L^p([0,2\pi2^{-M_0}]^d)}-Ct^2_n2^n-Ct_n
\\
&\geq 2^{-\frac{d}{p}M_0}\f(c-C2^{-M_0}\g)-C2^{-n},
\end{align*}
which implies the result of Corollary \ref{co2}.

\section*{Declarations}
\noindent\textbf{Data Availability}\\
No data was used for the research described in the article.

\vspace*{1em}
\noindent\textbf{Conflict of interest}\\
The authors declare that they have no conflict of interest.
\vspace*{1em}

\noindent\textbf{Funding}\\
J. Li is supported by the National Natural Science Foundation of China (12161004), Innovative High end Talent Project in Ganpo Talent Program (gpyc20240069), Training Program for Academic and Technical Leaders of Major Disciplines in Ganpo Juncai Support Program (20232BCJ23009), Jiangxi Provincial Natural Science Foundation (20252BAC210004).


\begin{thebibliography}{99}
\linespread{0}\addtolength{\itemsep}{-1.0ex}

\bibitem{BCD} H. Bahouri, J.-Y. Chemin, R. Danchin, Fourier Analysis and Nonlinear Partial Differential Equations, Grundlehren der Mathematischen Wissenschaften, Vol.343, Springer, Heidelberg, 2011.
\bibitem{BT} C. Bardos, E.S. Titi, Loss of smoothness and energy conserving rough weak solutions for the $3d$ Euler equations, Discrete Contin. Dyn. Syst. Ser. S, 3 (2010), 185-197.
\bibitem{BLim} J. Bourgain, D. Li, Strong ill-posedness of the incompressible Euler equation in borderline Sobolev spaces, Invent. Math., 201 (2015), 97-157.
 \bibitem{BLgfa} J. Bourgain, D. Li, Strong ill-posedness of the incompressible Euler equation in integer $C^m$ spaces, Geom. Funct. Anal., 25 (2015), 1-86.
\bibitem{BLimrn} J. Bourgain, D. Li, Strong ill-posedness of the 3D incompressible Euler equation in borderline spaces, Int. Math. Res. Not. IMRN, 16 (2021), 12155-12264.
\bibitem{BLcmp} J. Bourgain, D. Li, Galilean boost and non-uniform continuity for incompressible Euler, Commun. Math. Phys., 372
(2019), 261-280.
\bibitem{Chae1} D. Chae, On the well-Posedness of the Euler equations in the Triebel-Lizorkin Spaces, Commun. Pure Appl. Math., 55 (2002), 654-678.
\bibitem{Chae2} D. Chae, On the Euler equations in the critical Triebel-Lizorkin spaces, Arch. Ration. Mech. Anal., 170 (2003), 185-210.
\bibitem{Chae3} D. Chae, Local existence and blowup criterion for the Euler equations in the Besov spaces, Asymptot. Anal., 38 (2004), 339-358.
\bibitem{CS} A. Cheskidov, R. Shvydkoy, Ill-posedness of the basic equations of fluid dynamics in Besov spaces, Proc. Amer. Math. Soc. 138 (2010), 1059-1067.
\bibitem{P} P. Constantin, On the Euler equations of incompressible fluids, Bull. Amer. Math. Soc., 44 (2007), 603-621.
\bibitem{DM} R. DiPerna, A. Majda, Oscillations and concentrations in weak solutions of the incompressible fluid equations, Commun. Math. Phys., 108 (1987), 667-689.
\bibitem{EM70} D. Ebin, J. Marsden, Groups of diffeomorphisms and the motion of an incompressible fluid, Ann. of Math., 92 (1970), 102-163.
\bibitem{E} T.M Elgindi, Finite-time singularity formation for $C^{1,\alpha}$ solutions to the incompressible Euler equations on $\R^3$, Ann. of Math., 194 (2021), 647-727.
\bibitem{EM} T. Elgindi, N. Masmoudi, $L^\infty$ ill-posedness for a class of equations arising in hydrodynamics, Arch, Ration. Mech. Anal., 235 (2020), 1979-2025.
\bibitem{Gu} N. Gunther, On the motion of fluid in a moving container, Izvestia Akad. Nauk USSR, Ser. Fiz.–Mat., 20 (1927), 1323-1348.
\bibitem{guo} Z. Guo, J. Li, Z. Yin, Local well-posedness of the incompressible Euler equations in $B_{\infty,1}^1$ and the inviscid limit of the Navier-Stokes equations, J. Funct. Anal. 276 (2019), 2821-2830.

\bibitem{HMcmp} A. Himonas, G. Misio{\l}ek, Non-uniform dependence on initial data of solutions to the Euler equations of hydrodynamics, Commun. Math. Phys., 296 (2010), 285-301.
\bibitem{Kato} T. Kato, On the Cauchy problem for the (generalized) Korteweg-de Vries equation, Studies in applied mathematics, Adv. Math. Suppl. Stud., New York, 8 (1983), 93-128.
\bibitem{Karma} T. Kato, On classical solutions of the two-dimensional nonstationary Euler equation, Arch. Ration. Mech. Anal., 25 (1967), 188-200.
\bibitem{Kato75} T. Kato, The Cauchy problem for quasi-linear symmetric hyperbolic systems, Arch. Ration. Mech. Anal., 58 (1975), 181-205.
\bibitem{KLjfa} T. Kato, C. Lai, Nonlinear evolution equations and the Euler flow, J. Funct. Anal., 56 (1984) 15-28.
\bibitem{KPduke} T. Kato, G. Ponce, On nonstationary flows of viscous and ideal fluids in $L^p_s$ $(\R^2)$, Duke Math. J. 55 (1987), 487-499.
\bibitem{Kjfa} T. Kato, Nonstationary flows of viscous and ideal fluids in $\mathbb{R}^3$, J. Funct. Anal., 9 (1972), 296-305.

\bibitem{KPcpam} T. Kato, G. Ponce, Commutator estimates and the Euler and Navier-Stokes equations, Commun. Pure Appl. Math., 41 (1988), 891-907.


\bibitem{lyz} J. Li, Y. Yu, W. Zhu, Non-uniform dependence on initial data for the Euler equations in Besov spaces, J. Differ. Equ., 409 (2024), 774-789.

\bibitem{LL} Leon Lichtenstein, \"{U}ber einige Existenzprobleme der Hydrodynamik, Mat. Zeit. Phys., 23 (1925), 89-154.
\bibitem{Lions} P.L. Lions, Mathematical Topics in Fluid Mechanics. Volume 1. Incompressible Models. Oxford Lecture Series in Mathematics and its Applications. The Clarendon
Press/Oxford University Press, New York (1996).
\bibitem{MB} A. Majda, A. Bertozzi, Vorticity and Incompressible Flow, Cambridge University Press, Cambridge, 2001.
\bibitem{MYma} G. Misio{\l}ek, T. Yoneda, Local ill-posedness of the incompresssible Euler equations in $C^1$ and $B^1_{\infty,1}$, Math. Ann., 363 (2016), 243-268.
\bibitem{MYtams} G. Misio{\l}ek, T. Yoneda, Continuity of the solution map of the Euler equations in H\"{o}lder spaces and weak norm
inflation in Besov spaces, Trans. Amer. Math. Soc., 370 (2018), 4709-4730.
\bibitem{MYcm} G. Misio{\l}ek, T. Yoneda, Ill-posedness examples for the quasi-geostrophic and the Euler equations, Analysis, geometry and quantum field theory. In: Contemp. Math., vol. 584, pp. 251-258. Am. Math. Soc., Providence (2012).
\bibitem{Pak1} H.C. Pak, Y.J. Park, Existence of solutions for the Euler equations in a critical Besov space $B^1_{\infty,1}(\R^n)$, Commun. Partial Differ. Equ., 29 (2004), 1149-1166.


\bibitem{Varma} M. Vishik, Hydrodynamics in Besov spaces, Arch. Ration. Mech. Anal., 145 (1998), 197-214.
\bibitem{Vasns} M. Vishik, Incompressible flows of an ideal fluid with vorticity in borderline spaces of Besov type, Ann. Sci., \'{E}cole Norm. Sup., 32 (1999), 769-812.
\end{thebibliography}
\end{document}